\documentclass[10pt]{article}
\usepackage[margin=1in]{geometry}
\usepackage[utf8]{inputenc}
\usepackage{comment}
\usepackage{amsmath,amssymb,amsthm,amsfonts}

\newcommand{\supp}{\mathrm{supp}}

\newcommand{\floorn}{\lfloor \frac{n}{2}\rfloor}
\newcommand{\fracn}{\frac{n}{2}}
\usepackage{bbm}
\usepackage{tikz}
\usepackage{framed}
\usepackage{url}
\usepackage[backend=bibtex]{biblatex}
\nocite{*}
\usepackage{xcolor}
\pagecolor{white}
\usepackage{hyperref}

\newtheorem{theorem}{Theorem}[section]
\newtheorem{lemma}[theorem]{Lemma}
\newtheorem{corollary}[theorem]{Corollary}

\theoremstyle{definition}
\newtheorem{definition}{Definition}[section]
\newtheorem*{remark}{Remark}

\numberwithin{equation}{section}
\numberwithin{figure}{section}
\numberwithin{table}{section}

\title{Rigorous High-Order Hausdorff Dimension Estimation of Limit Sets of Continued Fraction Iterated Function Systems via B-Splines}
\author{Jacob Brown\footnote{Department of Mathematics, University of Connecticut, Storrs, CT 06269, USA (e-mail: \href{mailto:jacob.l.brown@uconn.edu}{jacob.l.brown@uconn.edu)}}}
\date{\today}

\begin{document}

\maketitle

\begin{abstract}
We develop a method for the rigorous estimation of Hausdorff dimensions of limit sets produced by continued fraction iterated function systems. Our method is based on the approximation of a Perron-Frobenius operator using the finite element method with B-splines as the choice of basis functions. This choice provides key numerical advantages including higher-order convergence and computational flexibility. We prove an analogue of Falk and Nussbaum's result on ``hidden positivity" for B-spline quasi-interpolants to give rigorous upper and lower bounds for the Hausdorff dimensions of various limit sets. We provide numerical results to verify both the rigor and higher-order convergence of our method for quadratic B-spline interpolants in one and two dimensions.
\end{abstract}

\section{Introduction}

Recently, research interest in the fields of fractal geometry and dynamical systems has included the rigorous estimation of the Hausdorff dimensions of limit sets associated to iterated function systems. One of the earliest and most influential results is due to Hutchinson \cite{hutchinson1981fractals}: Let $X$ be a compact metric space and $\{\phi_e\}_{e=1}^m$ be a family of contractive similarities on $X$, with contraction ratios $r_e$, satisfying the open set condition. The Hausdorff dimension of the limit set is the parameter $s>0$ such that $\sum_{e=1}^mr_e^s=1$. This result is sometimes referred to as Hutchinson's formula. Using this formula, one can compute, for example, that the dimension of the $\frac{1}{3}$-Cantor Set is $\log2/\log3$, the dimension of the von Koch curve is $\log4/\log3$, and numerous other examples.

While powerful, Hutchinson's formula does not apply to systems with infinitely many maps, nor to systems whose maps do not consist of similarities. Many of these more complex systems are conformal iterated function systems (CIFSs). Mauldin and Urba\'{n}ski \cite{mauldin1996dimensions} employed the thermodynamic formalism to study these systems, and their methods set a theoretical framework for what would later allow numerical methods to be developed. In particular, they established that there is a unique parameter value $s>0$ such that the Perron-Frobenius operator (to be further discussed in Section 2)
$$L_sf(x)=\sum_{e\in E}\|D\phi_e(x)\|^sf(\phi_e(x))$$
has leading eigenvalue 1 with a strictly positive eigenfunction $f_s$, and that this parameter value is exactly equal to the Hausdorff dimension of the limit set of the CIFS. 

One particular CIFS that has garnered much interest is the \textit{continued fraction} CIFS. Any irrational number $x\in(0,1)$ can be expressed in continued fraction form,
$$x=[e_1(x),e_2(x),e_3(x),\dots]=\cfrac{1}{e_1(x)+\cfrac{1}{e_2(x)+\cfrac{1}{e_3(x)+\cfrac{1}{\dots}}}},\quad\quad e_i(x)\in\mathbb{N},\ i=1,2,\dots$$
There is a natural correspondence between these fractions and the CIFS consisting of $[0,1]$ and the maps
$$\phi_e(x)=\frac{1}{x+e},\quad e\in \mathbb{N}.$$
In particular, for any subset $E\subset\mathbb{N}$ the limit set associate to the system restricted to the maps $\{\phi_e:e\in E\}$ is precisely the set of irrational numbers whose continued fraction expansion only contains elements in $E$. Rigorous dimension estimates for various choices of $E$ has been the subject of numerous publications in recent years. Notably, Pollicott and Vytnova were able to estimate the dimension of the limit set associate to $E=\{1,2\}$ to an accuracy of 200 digits \cite{pollicott2022hausdorff}.

In light of the result of Mauldin and Urba\'{n}ski, numerical methods for approximating the Perron-Frobenius operator and, in turn, the Hausdorff dimensions of various limit sets of CIFSs—and the continued fraction CIFS in particular—were developed. Recently, Falk and Nussbaum developed and iterated upon an approach to approximate the Perron-Frobenius operator using a collocation method based on continuous piecewise multilinear functions \cite{falk2018new,falk2018c}, and subsequently based on continuous piecewise polynomials of arbitrary degree \cite{falk2021hidden}. Their method was employed by Chousionis et al. \cite{chousionis2020dimension} to study the dimension spectra of subsystems of the continued fraction CIFS. Chousionis et al. \cite{chousionis2024rigorous} further developed a finite element method approach using piecewise linear polynomials to obtain rigorous dimension estimates for higher-dimensional continued fractions, the Apollonian gasket, and other systems.

A common obstacle in these numerical methods is the need for positivity preservation in the approximation scheme, which is necessary to obtain rigorous estimates. A result of Korovkin states that any linear approximation scheme that preserves positivity must be at most second-order accurate \cite{korovkin1957order}. Commonly, the Perron-Frobenius operator is approximated by a matrix, and this matrix needs to have nonnegative entries in order for a theorem regarding the spectral radius of positive matrices to hold (See Lemma \ref{lem:pos_mat} in Section 2). 

In \cite{falk2021hidden}, Falk and Nussbaum circumvented the need for positivity in the traditional sense by showing that the Perron-Frobenius operator maps a certain cone of functions into itself, which allowed them to utilize an extension of Lemma \ref{lem:pos_mat}. Through this extension, they were able to utilize polynomial interpolants of degree greater than 1, which do not necessarily preserve positivity and thus lead to negative entries in the approximation matrices. They coined the term ``hidden positivity" to refer to this more abstract notion of positivity.

One technical difficulty encountered by the authors was that in order for their results to hold, they needed to employ approximations of an iterated operator,
$$L^\nu_sf(x)=\sum_{e\in E^\nu}\|D\phi_e(x)\|^sf(\phi_e(x)).$$
While this iterated operator has a smaller domain requiring fewer mesh points to approximate, the increase in the number of terms in the alphabet usually offsets any computational advantages. The authors noted, however, that they found no cases where their method failed if they did not iterate the operator, leading them to conjecture that this condition was only an artifact of the method of proof.

In this paper, we consider a finite element method approach to estimate the Hausdorff dimensions of continued fraction CIFSs based on B-splines. As we will show, our method has several key advantages. We show that quasi-interpolation of $L_s$ using B-splines of degree $n$ results in convergence of order $n+1$, which allows us to obtain rigorous estimates without turning to high-degree interpolants. Moreover, we resolve the conjecture of Falk and Nussbaum by showing that neither the proof nor the implementation of this method requires iteration on the alphabet. Finally, we show that our hidden positivity result extends to B-spline interpolants in all dimensions, allowing us to rigorously estimate dimensions of higher-dimensional systems.

The rest of the paper is structured as follows. In Section 2 we provide background information for B-splines, conformal iterated function systems, and Falk and Nussbaum's hidden positivity. We also prove some technical results (Theorems \ref{thm:partial_est} and \ref{thm:third_bounds}). In Section 3 we prove our main theoretical results on hidden positivity for B-splines in both one dimension and higher dimensions. Finally, in Section 4 we provide numerical examples of rigorous upper and lower dimension estimates for various alphabets in one and two dimensions using quadratic B-splines as our basis functions. We also provide calculations to show that our method converges with order 3, in accordance with our theoretical results.
%We describe the theory underlying the approximation as well as the advantages to using B-splines over other approximation methods. Moreover, we show that Falk and Nussbaum's notion of hidden positivity holds for B-spline approximations of arbitrary degree and dimension. This is necessary because, as described in Section 2, the approximation obtained using B-splines of degree greater than 1 are not necessarily positivity-preserving. Finally, we present numerical results for various continued fraction systems in one and two dimensions.

\section{Preliminaries}

In this section we introduce the necessary background about B-splines and the continued fraction conformal iterated function system. 

\subsection{B-Splines}

We adopt the algorithmic point of view for B-splines presented by de Boor and H\"{o}llig and discussed in \cite{hollig2013approximation}, as opposed to the more traditional definition via divided differences.

\begin{definition}
A \textit{knot sequence} $\xi=\{\xi_k\}_{k\in I}$ is a finite or bi-infinite non-decreasing sequence of real numbers (called \textit{knots}) without accumulation points, where $I$ denotes the index multi-set of the knot sequence. The number of times a knot $\xi_k$ appears in the sequence is called its \textit{multiplicity}, which is denoted by $\#\xi_k$, and if $\#\xi_k\geq2$, we refer to $\xi_k$ as a \textit{repeated knot}. The knot sequence partitions a subset $R\subseteq\mathbb{R}$ into \textit{knot intervals} $[\xi_\ell,\xi_{\ell+1})$. A knot sequence is \textit{uniform} if all of its knot intervals have equal length.
\end{definition}

Figure \ref{fig:knot_sequence} shows an example of a knot sequence. The use of the index $\ell$ for the knot intervals comes from the fact that intervals of the form $[\xi_k,\xi_{k+1})$ can be degenerate in the case when the knot sequence has repeated knots. If the knot sequence has no repeated knots, then the indices $\ell$ and $k$ coincide, in which case we will write $[\xi_k,\xi_{k+1})$ to denote the knot intervals.

\begin{figure}[b]
\centering
\begin{tikzpicture}
            \fill[gray!50] (7,-0.1) rectangle (9,0.1);
            \draw[<->, thick] (0,0)--(10,0);
            \draw (1,0.15)--(1,-0.15);
            \draw (2,0.15)--(2,-0.15);
            \draw (3.5,0.15)--(3.5,-0.15);
            \draw (5,0.15)--(5,-0.15);
            \draw (5.1,0.15)--(5.1,-0.15);
            \draw (4.9,0.15)--(4.9,-0.15);
            \draw (7,0.15)--(7,-0.15);
            \draw (9,0.15)--(9,-0.15);
            
            \node[anchor=south] at (8,0.15) {$[\xi_{\ell},\xi_{\ell+1})$};
            \node[anchor=north] at (0.3,-0.3) {$\dots$};
            \node[anchor=north] at (9.7,-0.3) {$\dots$};
            \node[anchor=north] at (1,-0.15) {$\xi_{-1}$};
            \node[anchor=north] at (2,-0.15) {$\xi_0$};
            \node[anchor=north] at (3.5,-0.15) {$\xi_1$};
            \node[anchor=north] at (7,-0.15) {$\xi_5$};
            \node[anchor=north] at (9,-0.15) {$\xi_6$};
            \node[anchor=north] at (5,-0.15) {$\xi_2=\xi_3=\xi_4$};
            \node[anchor=west] at (10,0) {$\mathbb{R}$};
        \end{tikzpicture}
        \caption{An example of a knot sequence.}
\label{fig:knot_sequence}

\end{figure}
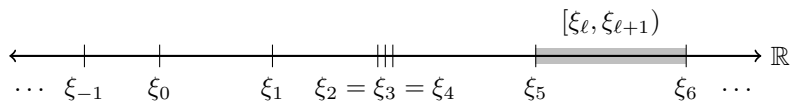

\begin{definition}
Given a knot sequence $\xi=\{\xi_k\}_{k\in I}$, we define the degree $n$ \textit{B-splines} using the following recurrence relation. The degree 0 B-splines are defined as the characteristic functions of the knot intervals,
$$b^0_{k,\xi}(x)=\begin{cases}
1&\text{if $\xi_k\leq x<\xi_{k+1}$}\\
0&\text{otherwise.}
\end{cases}$$
The degree $n$ B-splines for $n\geq 1$ are defined inductively as
$$b^n_{k,\xi}=\gamma^n_{k,\xi}b^{n-1}_{k,\xi}+(1-\gamma^n_{k+1,\xi})b^{n-1}_{k+1,\xi},\quad\text{where }\gamma^n_{k,\xi}(x)=\frac{x-\xi_k}{\xi_{k+n}-\xi_k}.$$
For knot sequences with repeated knots, we discard any terms where the denominator of $\gamma^n_{k,\xi}$ vanishes. It can be shown that the derivative of a B-spline follows the recursion
\begin{equation}
(b^n_{k,\xi})'=\alpha_{k,\xi}^nb^{n-1}_{k,\xi}-\alpha_{k+1,\xi}^{n}b^{n-1}_{k+1,\xi},\quad\text{where }\alpha^n_{k,\xi}=\frac{n}{\xi_{k+n}-\xi_{k}}.
\label{eqn:bsplinederiv}
\end{equation}
\end{definition}

The \textit{parameter interval} $D^n_\xi$ of the B-splines $b^n_{k,\xi}$ is the maximal interval on which the collection of $b^n_{k,\xi}$ form a partition of unity. Figure \ref{fig:param_int} shows an example of a collection of quadratic B-splines and their parameter interval. As can be seen, the parameter interval is a proper subinterval of the entire domain on which the B-splines are constructed. In particular, for degree $n$ B-splines on a finite knot sequence without repeated knots, the parameter interval is obtained by removing $n$ leftmost and $n$ rightmost knot intervals. We require that the parameter interval span the entire domain of the function to be approximated in order for the upcoming approximation results to hold. As such, in implementation, we will need to expand our computational domain by $n$ knot intervals on each side. 

\begin{figure}
\centering
\includegraphics[width=0.5\linewidth]{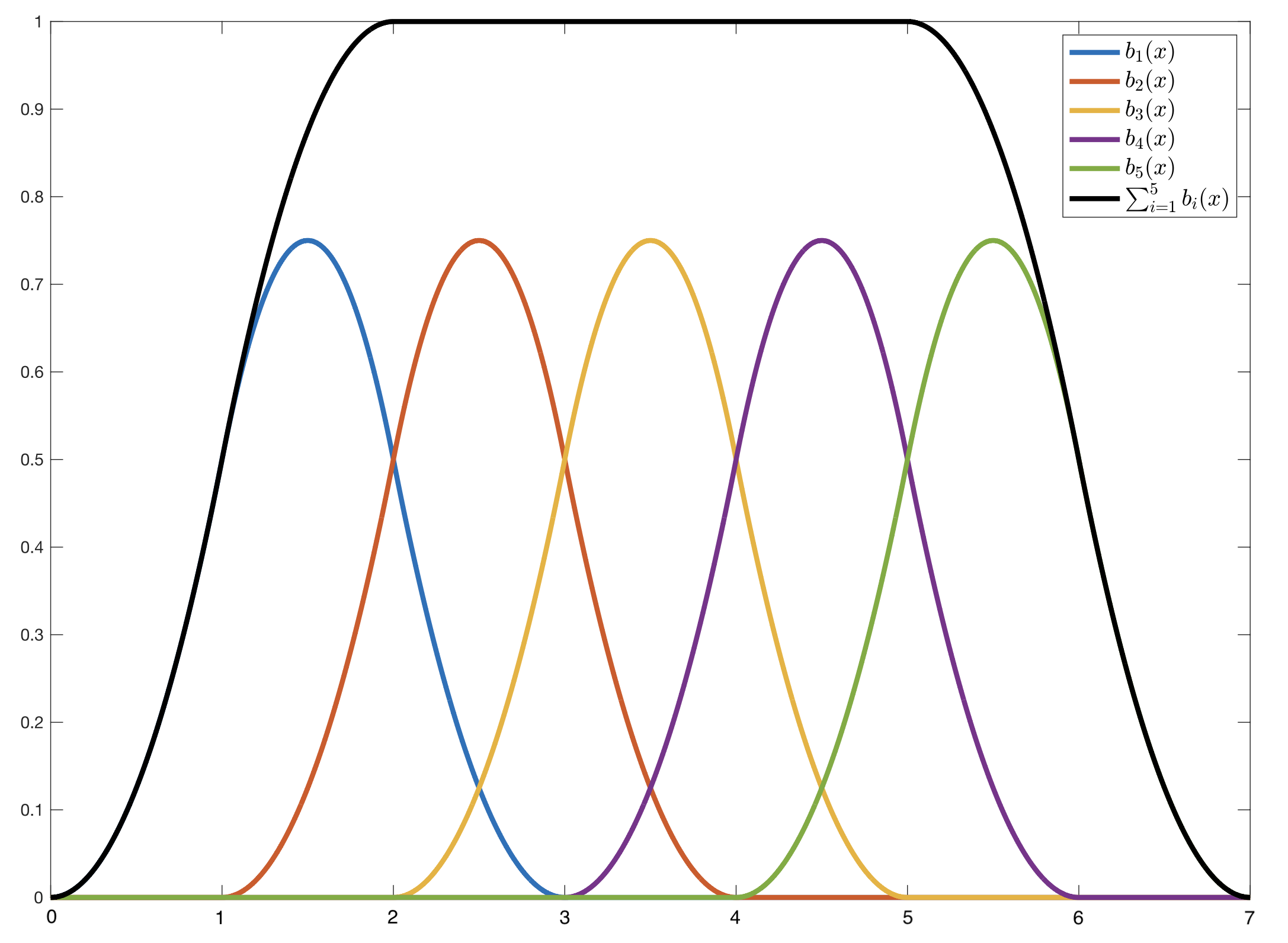}
\caption{The parameter interval $D^2_\xi=[2,5]$ when using quadratic B-splines on the domain $[0,7]$ with knot sequence $\xi=\{0,1,2,\dots,7\}$.}
\label{fig:param_int}
\end{figure}

It follows immediately from the definition that $b^n_{k,\xi}$ is non-negative for all $n$. For our discussion, we will assume that the knot sequence has no repeated knots. Under this assumption, the support of the B-spline is $\supp(b^n_{k,\xi})=[\xi_k,\xi_{k+n+1}]$, that is, the closure of $n+1$ consecutive knot intervals. When the degree and knot sequence are clear, we will write $b_k$ to ease the notation. Given a point $x$, we write $k\sim x$ to denote the B-splines ``relevant" to $x$, i.e., $k\sim x=\{k:b_k(x)>0\}$, and for a set $\Omega$ we write $k\sim\Omega$ to denote the B-splines such that $k\sim x$ for some $x\in\Omega$.

%The parameter interval of degree $n$ B-splines defined on a finite knot sequence $\xi=\{\xi_0,\xi_1,\dots,\xi_m\}$ without repeated knots and where $m\geq 2n$ is $[\xi_n,\xi_{m-n}]$. This fact will be of consequence for the construction of our approximations.

The construction of a $d$-dimensional B-spline is rather straightforward, as it is simply the tensor product of one-dimensional B-splines. To be precise, we first equip each axis $j=1,2,\dots,d$ with a knot sequence $\xi_j=\{\xi_{k_j}\}_{k_j\in I_j}$, where $I_j$ is the index set corresponding to axis $j$. We then construct the degree $n_j$ B-splines $b_{k_j,\xi_j}^{n_j}$ along each axis, and define the $d$-dimensional B-spline as
$$b_{k,\xi}^n(x_1,x_2,\dots,x_d)=\prod_{j=1}^db_{k_j,\xi_j}^{n_j}(x_j),$$
where $n=(n_1,n_2,\dots,n_d)$ is the so-called coordinate degree of the $d$-dimensional B-spline.

As in the univariate case, there is a subset $D^n_\xi$ of the domain on which the collection of B-splines forms a partition of unity, called the \textit{parameter (hyper)rectangle}. Figure \ref{fig:param_int_2d} shows an example of the parameter rectangle for quadratic B-splines in $\mathbb{R}^2$.

\begin{figure}[b]
\centering
\includegraphics[width=0.5\linewidth]{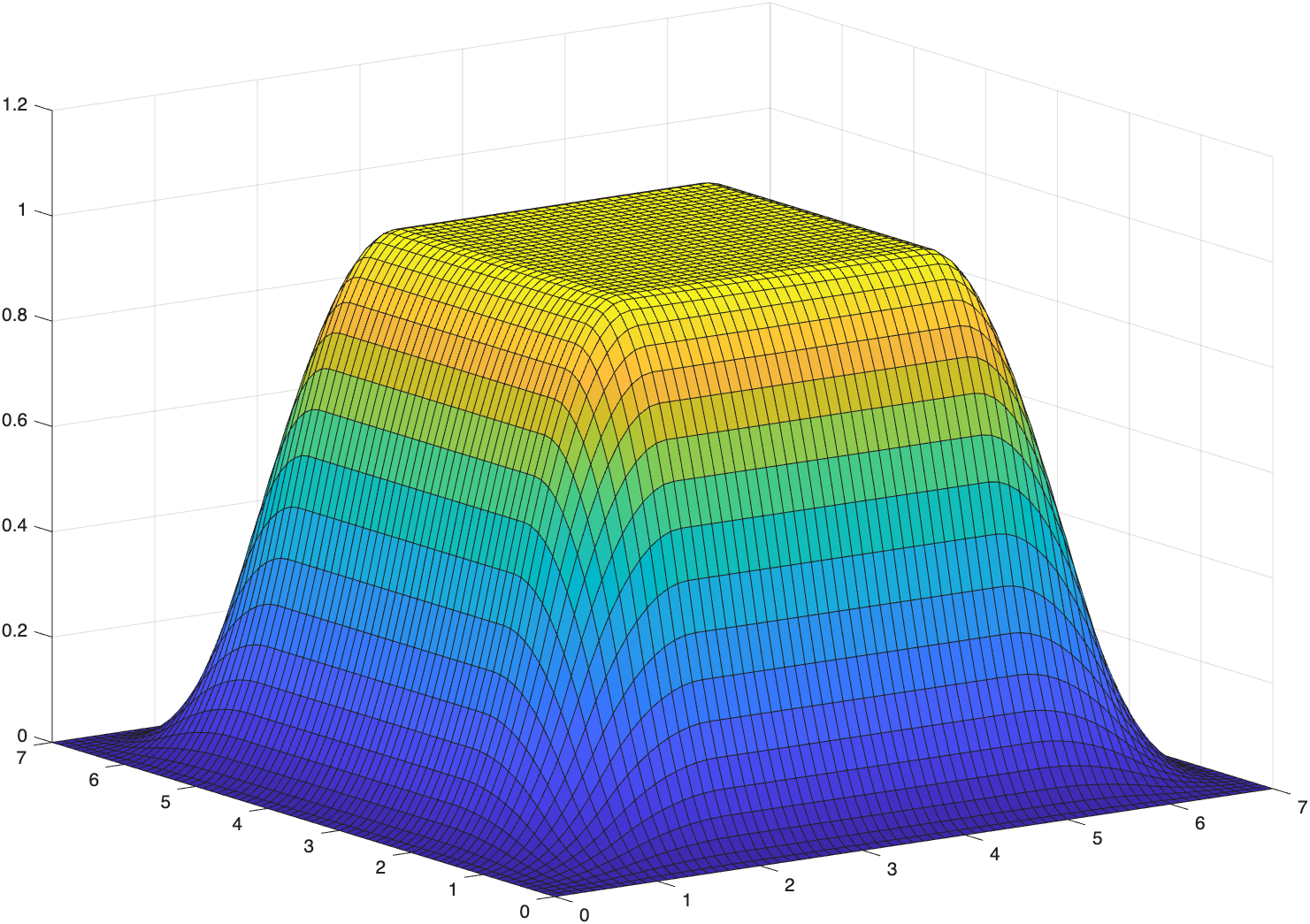}
\caption{Example of the parameter rectangle for 2-dimensional B-splines.}
\label{fig:param_int_2d}
\end{figure}

\clearpage
We now turn our attention to approximation using B-splines.
\begin{definition}
Let $\{b_k\}$ be the collection of degree $n$ B-splines defined on a knot sequence $\xi$ with parameter interval $D^n_\xi$. A linear B-spline approximation scheme of the form $f\approx Qf=\sum_k (Q_kf)b_k$ is called a \textit{quasi-interpolant} provided
\begin{enumerate}
\item Each $Q_k$ is a linear functional that acts on $f$.
\item Each $Q_k$ is locally bounded. That is, there is a number $\|Q\|>0$ such that 
$$|Q_kf|\leq \|Q\|\,\|f\|_{\infty,[\xi_k,\xi_{k+n+1}]}$$
for each $Q_k$, where $\|f\|_{\infty,U}=\sup\{|f(x)|:x\in U\}$.
\item $Qp(x)=p(x)$ for every $x\in D^n_\xi$ and polynomial $p$ of degree at most $n$.
\end{enumerate}
\end{definition}

Quasi-interpolants are desirable for their accuracy in approximating functions and their derivatives, as shown in the following result whose proof can be found on page 86 of \cite{hollig2013approximation}.

\begin{theorem}
\label{thm:quasi_error}
Let $f$ be $n+1$ times continuously differentiable and $Qf=\sum_k(Q_kf)b_k$ be a quasi-interpolant of $f$ using degree $n$ B-splines on a knot interval $\xi$ with parameter interval $D^n_\xi$. For each $x\in D^n_\xi$, let $D_x=\bigcup_{k\sim x}\supp(b_k)$ and $h(x)=\max\{|x-y|:y\in D_x\}$. Then
$$|f(x)-Qf(x)|\leq \frac{\|Q\|}{(n+1)!}\|f^{(n+1)}\|_{\infty,D_x}h(x)^{n+1}.$$
Further suppose that the quotients of consecutive knot intervals are uniformly bounded by $r$. Then for all $1\leq j\leq n$, there is a constant $C$ depending on $n$ and $r$ such that
$$|f^{(j)}(x)-(Qf)^{(j)}(x)|\leq C\|Q\|\|f^{(n+1)}\|_{\infty,D_x}h(x)^{n+1-j}.$$
\end{theorem}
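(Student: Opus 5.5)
The plan is the standard ``polynomial reproduction plus local boundedness'' argument. Fix $x\in D^n_\xi$ and let $p$ be the Taylor polynomial of $f$ of degree $n$ centered at $x$. Property 3 of a quasi-interpolant gives $Qp=p$ on $D^n_\xi$, so
$$f(x)-Qf(x)=(f-p)(x)-Q(f-p)(x)=-\sum_{k\sim x}Q_k(f-p)\,b_k(x),$$
since $(f-p)(x)=0$ and only B-splines with $k\sim x$ contribute at $x$. By local boundedness, $|Q_k(f-p)|\le\|Q\|\,\|f-p\|_{\infty,\supp(b_k)}\le \|Q\|\,\|f-p\|_{\infty,D_x}$. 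Taylor's remainder gives, for $y\in D_x$, $|f(y)-p(y)|\le \|f^{(n+1)}\|_{\infty,D_x}|y-x|^{n+1}/(n+1)!\le \|f^{(n+1)}\|_{\infty,D_x}h(x)^{n+1}/(n+1)!$. This uses that $D_x$ is an interval containing $x$, so the segment from $x$ to $y$ stays in $D_x$; that holds because the supports $[\xi_k,\xi_{k+n+1}]$ with $k\sim x$ all contain $x$. Finally, nonnegativity of the $b_k$ and the partition of unity on $D^n_\xi$ give $\sum_{k\sim x}b_k(x)=1$, which yields the first estimate.

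For the derivative estimate we use the same $p$ and write $(f-Qf)^{(j)}(x)=(f-p)^{(j)}(x)-\sum_{k\sim x}Q_k(f-p)\,b_k^{(j)}(x)$. The first term vanishes because $j\le n$. So it suffices to bound $\sum_{k\sim x}|b_k^{(j)}(x)|$. Iterating the derivative recursion \eqref{eqn:bsplinederiv} $j$ times expresses $b_k^{(j)}$ as a combination of at most $2^j$ B-splines of degree $n-j$, each bounded by $1$. The coefficients are products of $j$ factors $\alpha^{m}_{i}=m/(\xi_{i+m}-\xi_i)$, and each denominator is a sum of knot intervals near $x$. There are at most $n+1$ relevant indices $k$. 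This gives $\sum_{k\sim x}|b_k^{(j)}(x)|\le C'\,\delta^{-j}$, where $\delta$ is the length of the knot interval containing $x$ (or any fixed reference interval among those near $x$). Combining this with the bound on $|Q_k(f-p)|$ gives $C\|Q\|\,\|f^{(n+1)}\|_{\infty,D_x}h(x)^{n+1}\delta^{-j}$.

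The main obstacle is converting $h(x)^{n+1}\delta^{-j}$ into $h(x)^{n+1-j}$, i.e.\ showing $h(x)\le C''\delta$ and that every denominator $\xi_{i+m}-\xi_i$ is bounded below by a multiple of $\delta$. Here the ratio bound $r$ enters. $D_x$ consists of at most $2n+1$ consecutive knot intervals around $x$, so each has length between $r^{-(2n)}\delta$ and $r^{2n}\delta$. Hence $h(x)\le(2n+1)r^{2n}\delta$, and each denominator is at least $r^{-2n}\delta$. These constants depend only on $n$ and $r$, which completes the argument with $C=C(n,r)$. A small point to check is the behaviour at knots, where $b^{n-j}$ may be discontinuous for $j=n$. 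There one interprets the derivatives one-sidedly, consistent with the half-open knot intervals in the definition.
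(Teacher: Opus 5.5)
Your proof is correct and is essentially the standard argument the paper relies on; the paper gives no proof of its own and cites H\"ollig--H\"orner, p.~86. For the value estimate you use the Taylor polynomial of degree $n$ at $x$, polynomial reproduction, local boundedness and the partition of unity. For the derivatives you use the B-spline differentiation formula together with the ratio bound $r$ to get $\sum_{k\sim x}|b_k^{(j)}(x)|\le c(n,r)\,h(x)^{-j}$.

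One loose end is the point you flagged: for $j=n$ at a knot $x=\xi_\ell$, a one-sided $n$-th derivative also picks up a B-spline that vanishes at $x$ (e.g.\ $b_\ell$ for the right derivative), and its support leaves $D_x$. So reading the derivative one-sidedly is not enough with $k\sim x$ defined as $\{k:b_k(x)>0\}$. There you should take $k\sim x$ to mean the $n+1$ B-splines nonzero on the half-open knot interval containing $x$, after which your argument goes through unchanged.
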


We will be using uniform meshes in all of our computational examples. Therefore, if the mesh size is $h$, since $x$ lies in the middlemost interval of $2n+1$ consecutive knot intervals, we have $h(x)\leq(n+1)h$. Replacing $\|f^{(n+1)}\|_{\infty,D_x}\leq\|f^{(n+1)}\|_\infty$, we have the estimates
$$|f(x)-Qf(x)|\leq \frac{(n+1)^{n+1}\|Q\|}{(n+1)!}\|f^{(n+1)}\|_{\infty}h^{n+1}=\frac{(n+1)^n\|Q\|}{n!}\|f^{(n+1)}\|_{\infty}h^{n+1}$$
and 
$$|f^{(j)}(x)-(Qf)^{(j)}(x)|\leq \frac{2n(n+1)^{n+1-j}\|Q\|}{(n+1)!}\|f^{(n+1)}\|_{\infty}h^{n+1-j}=\frac{2(n+1)^{n-j}\|Q\|}{(n-1)!}\|f^{(n+1)}\|_{\infty}h^{n+1-j}.$$

As we will see in the coming sections, there are conditions on how small $h$ must be chosen in order for our numerical results to be rigorous. Therefore, we require explicit formulas and/or estimates for every constant in the inequalities above. In particular, we need to be able to compute $\|Q\|$, the boundedness constant of the quasi-interpolant. Below, we will describe our construction of the quasi-interpolant and list the value of $\|Q\|$ for various degrees of the quasi-interpolant. Estimates for $\|f^{(n+1)}\|_{\infty}$ and the other derivatives will be presented in the next subsection.%This number depends on the sample points used 

On a uniform knot sequence with grid size $h$, a natural choice for the quasi-interpolant linear functionals is a weighted sum of the function values at the midpoints of the knot intervals on which $b_k$ is supported. That is,
$$Q_kf=\sum_{v=0}^nw_vf(\xi_{k+v}+h/2),$$
where the weights $w_v$ satisfy the linear system
$$\sum_{v=0}^nw_v(v-j)^n=\prod_{i=1}^n(i-j-1/2).$$
Some important properties of these weights are: (i) $\sum_{v=0}^nw_v=1$, (ii) $w_{\lfloor\frac{n}{2}\rfloor}=\max_v|w_v|$, and (iii) $w_v=w_{n-v}$ for $v=0,1\dots,n$. We remark that a choice for $\|Q\|$ that satisfies the local boundedness requirement for this choice of quasi-interpolant is $\|Q\|=\sum_{v=0}^n|w_v|$. In Table \ref{table:weights}, we provide the weights and the value of $\|Q\|$ for this construction of the quasi-interpolant for $n=1,2,3,$ and $4$.

\begin{table}[h]
\centering
\begin{tabular}{c|c|c|c|c|c|c}
& $w_0$ & $w_1$ & $w_2$ & $w_3$ & $w_4$ & $\|Q\|$ \\\hline
$n=1$ & $1/2$ & $1/2$ & & & & $1$ \\\hline
$n=2$ & $-1/8$ & $5/4$ & $-1/8$ & & & $3/2$ \\\hline
$n=3$ & $-7/48$ & $31/48$ & $31/48$ & $-7/48$ & & $19/12$   \\\hline
$n=4$ & $47/1152$ & $-107/288$ & $319/192$ & $-107/288$ & $47/1152$  & $179/72$ \\
\end{tabular}
\caption{Weights for the 1D quasi-interpolant using the midpoints of the knot intervals for $n=1,2,3,4$ \cite{hollig2013approximation}.}
\label{table:weights}
\end{table}

The generalization of quasi-interpolants to $\mathbb{R}^d$ proceeds similarly to the generalization of B-splines to higher dimensions. We assume that each axis has been equipped with a uniform knot sequence $\xi_j$ with grid size $h_j$. Let $w_{v_j}$, $v_j=0,1,\dots,n_j$ be the weights corresponding to the one-dimensional quasi-interpolant on axis $j$, then the $d$-dimensional quasi-interpolant is defined as $f\approx Qf=\sum_k(Q_kf)b_k$, where the $b_k$ are the $d$-dimensional B-splines and 
\begin{align*}
Q_kf&=Q_{(k_1,k_2,\dots,k_d)}f\\
&=\sum_{v_1=0}^{n_1}\sum_{v_2=0}^{n_2}\cdots\sum_{v_j=0}^{n_j}w_{v_1}w_{v_2}\cdots w_{v_d}f(\xi_{k_1+v_1}+h_1/2,\xi_{k_2+v_2}+h_2/2,\dots,\xi_{k_d+v_d}+h_d/2).
%\label{eqn:d_dim_quasi}
\end{align*}

That is, the coefficient $Q_kf$ is a weighted sum of the function values at the midpoints of the hyperrectangles on which $b_k$ is supported, and the weight on each midpoint is the product of the weights in the one-dimensional quasi-interpolant. As with the univariate case, there is an approximation result for higher-dimensional quasi-interpolants. In order to state this result, we need the following bound on polynomial approximation. We provide the statement as written in \cite{hollig2013approximation}---since we will make a slight modification to it---but the original result is due to Reif \cite{reif2012polynomial}.
\begin{theorem}
\label{thm:poly_bound}
For a hyperrectangle $R=[a_1,b_1]\times\dots\times[a_d,b_d]$, let $\mathbb{P}^n(R)$ denote the space of polynomials of degree at most $n$ on $R$. For a function $f$ on $R$, the error of the orthogonal projection $P^nf\in\mathbb{P}^n(R)$ defined by
$$\int_Rfq\,dx=\int_R(P^nf)q\,dx\quad\text{for all $q\in\mathbb{P}^n(R)$}$$
is bounded by
$$|f(x)-P^nf(x)|\leq c(n,d)\sum_{v=1}^dh_v^{n+1}\|\partial_v^{n+1}f\|_{\infty,R},$$
where $h_v$ is the width of $R$ in the $v$-th direction and $c(n,d)$ is a constant depending on $n$ and $d$.
\end{theorem}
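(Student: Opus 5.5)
The plan is to prove the bound by reducing the multivariate error to one-dimensional Taylor remainders along each coordinate direction. The approach is to build $P^n f$ inductively as a tensor-type construction and use the fact that $P^n$ is a projection that reproduces $\mathbb{P}^n(R)$. After an affine change of variables we may assume $R$ is the unit cube. The $L^2$-orthogonal projection onto $\mathbb{P}^n$ is a bounded operator in $L^\infty$ on the unit cube, with norm $\Lambda(n,d)$, because $\mathbb{P}^n$ is finite dimensional and all norms on it are equivalent. Both $\Lambda$ and the projection commute with the scaling, since the projection is affine-invariant. Hence for any polynomial $q\in\mathbb{P}^n(R)$,
$$|f(x)-P^nf(x)|\le |f(x)-q(x)|+|P^n(q-f)(x)|\le (1+\Lambda)\|f-q\|_{\infty,R}.$$
This reduces the problem to exhibiting one good polynomial $q$.

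To construct $q$, I would telescope through the coordinates. Let $T_v$ denote the degree-$n$ Taylor expansion in the single variable $x_v$ about $a_v$, with the other variables frozen. Set $g_0=f$ and $g_v=T_vg_{v-1}$, so that $g_d$ is a polynomial of degree at most $n$ in each variable separately. The one-dimensional Lagrange remainder gives
$$\|g_{v-1}-g_v\|_\infty\le \frac{h_v^{n+1}}{(n+1)!}\|\partial_v^{n+1}g_{v-1}\|_\infty.$$
The catch is that $\partial_v^{n+1}g_{v-1}$ involves mixed derivatives of $f$, and $g_d$ has coordinate degree $n$ rather than total degree $n$. Both issues conflict with the statement, which uses only pure derivatives $\partial_v^{n+1}f$ and total degree $n$.

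This is the main obstacle, and the step I would take from Reif's approach instead of naive Taylor. The fix is to use averaged operators rather than point Taylor expansions, namely the one-dimensional $L^2$ projections $P_v$ in the variable $x_v$. These commute with differentiation in the other variables and are $L^\infty$-stable. One then shows the identity
$$f-P^nf=(I-P^n)\sum_{v}(\text{terms of the form }(I-P_v)\cdots),$$
and bounds each term using the fact that $(I-P_v)$ annihilates polynomials of degree $\le n$ in $x_v$. This yields $\|(I-P_v)g\|\le C h_v^{n+1}\|\partial_v^{n+1}g\|$, with $\partial_v^{n+1}$ commuting past the projections in the other variables. The total-degree issue should be handled by choosing the decomposition as in the Boolean-sum (Gordon) construction, with truncation so that only pure $(n+1)$-st derivatives appear. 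If this cannot be arranged, I would accept the statement as quoted from \cite{reif2012polynomial} and \cite{hollig2013approximation}. In the paper's spirit, the key point is really that $c(n,d)$ is explicit enough for the later modification.
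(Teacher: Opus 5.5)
Your proposal has a genuine gap, and it comes from reading $\mathbb{P}^n(R)$ as polynomials of \emph{total} degree at most $n$. Under that reading the statement is false, so no Boolean-sum or truncation device can rescue it. For $d=2$ and $n\ge 1$, the function $f(x_1,x_2)=x_1^nx_2$ has $\partial_1^{n+1}f=\partial_2^{n+1}f=0$, so the right-hand side vanishes. Yet $f$ has total degree $n+1$, hence $f\neq P^nf$. The result is Reif's, in the form quoted from \cite{hollig2013approximation}, and it matches the paper's use of B-splines of coordinate degree $n$. It concerns polynomials of \emph{coordinate} degree at most $n$, i.e.\ of degree at most $n$ in each variable separately. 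As written, your proposal never reaches an argument. The identity $f-P^nf=(I-P^n)\sum_v(\cdots)$ is left unspecified, and falling back on accepting the quoted statement is not a proof.

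With the correct space, pieces you already have close the argument. Your quasi-optimality step is, in one variable, exactly what the paper does explicitly. For an orthonormal basis $p_0,\dots,p_n$ (rescaled to $[a_v,b_v]$), the one-dimensional projection satisfies $\|P_vg\|_\infty\le c_1(n)\|g\|_\infty$ with $c_1(n)$ as in (\ref{eq:c_1}). Hence $\|g-P_vg\|_\infty\le(1+c_1(n))\|g-p\|_\infty$ for every $p$. Taking $p$ to be the Taylor polynomial at the midpoint, with the other coordinates frozen, gives $\|g-P_vg\|_{\infty,R}\le c_2(n)h_v^{n+1}\|\partial_v^{n+1}g\|_{\infty,R}$.

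The missing observation is that the coordinate-degree space has orthonormal basis $p_{k_1}(x_1)\cdots p_{k_d}(x_d)$, so by Fubini $P^n=P_1P_2\cdots P_d$. Then
\[
f-P^nf=\sum_{v=1}^dP_1\cdots P_{v-1}\,(f-P_vf).
\]
The $v$-th term is bounded by $c_1(n)^{v-1}c_2(n)h_v^{n+1}\|\partial_v^{n+1}f\|_{\infty,R}$. Each coefficient is at most the constant $c(n,d)=c_2(n)\sum_{w=1}^dc_1(n)^{w-1}$ of (\ref{eq:multi_error_bound}), whose form reflects exactly this structure. The paper defers this tensor step to \cite{hollig2013approximation}.

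Only pure derivatives appear because only the $L^\infty$-stability of the other one-dimensional projections is used. Your remark that $\partial_v^{n+1}$ commutes with $P_w$ for $w\neq v$ also works if you telescope in the opposite order. In particular, the $d$-variate reduction to a single good polynomial $q$ is unnecessary, and, as you noticed, it cannot be completed with Taylor polynomials.
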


We will now compute the constant $c(n,d)$ following the proof \cite{hollig2013approximation}, but we will make an adjustment that yields a better constant. First, we note that the estimate is invariant under translation and dilation, and thus we may take $R=[0,1]^d$. Let $p_0,p_1,\dots,p_n$ be an orthonormal basis for $\mathbb{P}^n([0,1])$, then by the boundedness of an orthogonal projection we have $\|P^nf\|_{\infty,[0,1]}\leq c_1(n)\|f\|_{\infty,[0,1]}$ where
\begin{equation}
c_1(n)=\sum_{k=0}^n\left(\int_0^1|p_k|\,dx\right)\|p_k\|_{\infty,[0,1]}.
\label{eq:c_1}
\end{equation}
Then for any $p\in \mathbb{P}^n([0,1])$, we have
$$\|f-P^nf\|_{\infty,[0,1]}\leq\|f-p\|_{\infty,[0,1]}+\|P^n(f-p)\|_{\infty,[0,1]}\leq(1+c_1(n))\|f-p\|_{\infty,[0,1]}.$$
By choosing $p$ to be the Taylor polynomial to $f$ at $x=1/2$, we obtain 
$$\|f-p\|_{\infty,[0,1]}\leq \frac{1}{2^n(n+1)!}\|f^{(n+1)}\|_{\infty,[0,1]}.$$
Thus, $\|f-P^nf\|_{\infty,[0,1]}\leq c_2(n)\|f^{(n+1)}\|$ with
\begin{equation}
c_2(n)=\frac{1+c_1(n)}{2^n(n+1)!}.
\label{eq:c_2}
\end{equation}
We refer the reader to \cite{hollig2013approximation} for the remainder of the proof. The constant $c(n,d)$ is given by
\begin{equation}
c(n,d)=c_2(n)\sum_{v=1}^dc_1(n)^{v-1}=\frac{c_2(n)[1-c_1(n)^{d}]}{1-c_1(n)}
\label{eq:multi_error_bound}
\end{equation}

With this result stated, we can now state the error bound for multidimensional quasi-interpolants (see page 148 of \cite{hollig2013approximation}).
\begin{theorem}
\label{thm:quasi_error_higher_dim}
Let $f$ be a smooth function on the parameter rectangle $R=D^n_\xi$. Suppose that each axis $v=1,\dots,d$ is equipped with a knot sequence of uniform mesh size $h$. For any order $n$, there exists a degree-$n$ multivariate spline $p$ such that
$$|f(x)-p(x)|\leq c(n,d)\|Q\|(2n+1)^{n+1}\sum_{v=1}^dh^{n+1}\|\partial_v^{(n+1)}f\|_{\infty,R},$$
where $c(n,d)$ is the constant defined in Equation (\ref{eq:multi_error_bound}). 
\end{theorem}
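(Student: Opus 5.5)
We take $p=Qf$, the multivariate quasi-interpolant, and estimate $|f(x)-Qf(x)|$ pointwise for $x\in R$ using Theorem \ref{thm:poly_bound} together with polynomial reproduction. Fix $x\in R=D^n_\xi$. Let $R_x$ be the smallest hyperrectangle containing the supports of all B-splines $b_k$ with $k\sim x$. In each coordinate direction, $x$ lies in one knot interval. The relevant B-splines are supported on the $n$ intervals to either side of it, so $R_x$ is a union of at most $(2n+1)$ knot intervals per axis and has width at most $(2n+1)h$ in each direction.

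Let $P^nf$ be the orthogonal projection of $f|_{R_x}$ onto $\mathbb{P}^n(R_x)$. The quasi-interpolant reproduces polynomials of degree at most $n$ on $D^n_\xi$. Since $Q_k$ only samples $f$ on $\supp(b_k)\subseteq R_x$, we have $Q(P^nf)(x)=P^nf(x)$. We also need that coordinate degree $n$ tensor splines reproduce $\mathbb{P}^n$. This follows from the tensor-product structure of both the weights and the splines, by applying the univariate reproduction in each variable. Writing $g=f-P^nf$, we get
$$f(x)-Qf(x)=g(x)-Qg(x)=g(x)-\sum_{k\sim x}(Q_kg)b_k(x).$$

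Next we bound each $|Q_kg|$. By the local boundedness of the (tensor-product) functionals, $|Q_kg|\le\|Q\|\,\|g\|_{\infty,\supp b_k}\le\|Q\|\,\|g\|_{\infty,R_x}$. Here $\|Q\|$ is interpreted as the multivariate constant, i.e.\ the product of the one-dimensional sums of $|w_v|$. Since the $b_k$ are nonnegative and form a partition of unity at $x$, this gives $|Qg(x)|\le\|Q\|\,\|g\|_{\infty,R_x}$. Therefore
$$|f(x)-Qf(x)|\le(1+\|Q\|)\,\|g\|_{\infty,R_x}.$$
We then absorb $1+\|Q\|\le 2\|Q\|$, or more carefully note that $g(x)$ is itself controlled by the projection bound. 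The factor-of-two question will be handled either by using $\|Q\|\ge1$, since $\sum w_v=1$, and accepting a harmless constant, or by observing that the paper's stated constant already tolerates this. This is the main point to check. A cleaner route would be to estimate $g(x)$ separately and only the sum term by $\|Q\|$, but the stated bound has a single $\|Q\|$.

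Finally, we apply Theorem \ref{thm:poly_bound} on $R_x$, whose side lengths satisfy $h_v\le(2n+1)h$:
$$\|g\|_{\infty,R_x}\le c(n,d)\sum_{v=1}^d\big((2n+1)h\big)^{n+1}\|\partial_v^{n+1}f\|_{\infty,R_x}.$$
Combining this with the previous step and replacing $R_x$ by $R$ in the derivative norms gives the claimed estimate.

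The main obstacle is that $R_x$ may extend outside $R$. The supports of the relevant splines reach up to $n$ intervals beyond the parameter rectangle near its boundary, so $f$ must be defined and smooth there. We will assume, as the paper does in its implementation, that $f$ is smooth on the enlarged computational domain, and interpret the norm accordingly.
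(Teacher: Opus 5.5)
Your route is the natural argument behind the paper's citation of H\"ollig--H\"orner; the paper itself gives no proof beyond that citation and the remark that $p=Qf$ is admissible. The route is: take $p=Qf$, use that the tensor-product quasi-interpolant reproduces polynomials of coordinate degree $\le n$, bound $|Q_kg|$ by the product constant $\|Q\|=(\sum_v|w_v|)^d$, and apply Theorem~\ref{thm:poly_bound} on the box $R_x$ of side $(2n+1)h$. Its individual steps are sound. But what they prove is
\[
|f(x)-Qf(x)|\le(1+\|Q\|)\,c(n,d)\,(2n+1)^{n+1}h^{n+1}\sum_{v=1}^d\|\partial_v^{n+1}f\|_{\infty,R_x},
\]
and the two remedies you float do not turn $1+\|Q\|$ into $\|Q\|$. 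Invoking $\|Q\|\ge1$ gives $2\|Q\|c(n,d)$, which is a weaker statement. ``The stated constant already tolerates this'' is precisely the claim that needs an argument. The factor matters here, because the same constant feeds $\mathrm{err}$ and $C_2$ in the certified bounds of Section 4. The univariate trick for a bare $\|Q\|$ (use a polynomial $q$ with $q(x)=f(x)$, so only $Q(f-q)$ survives) does not rescue you. Shifting $P^nf$ by the constant $f(x)-P^nf(x)$ costs $\|f-q\|_{\infty,R_x}\le 2\|f-P^nf\|_{\infty,R_x}$, i.e.\ again $2\|Q\|$.

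What is missing is a factor-$2$ slack in the projection bound, and with $c(n,d)$ exactly as defined it is there. For the Taylor polynomial at $\tfrac12$ one has $|y-\tfrac12|\le\tfrac12$ on $[0,1]$, so the remainder is at most $\|f^{(n+1)}\|_\infty/(2^{n+1}(n+1)!)$. That is half of what (\ref{eq:c_2}) uses. In $c(n,d)=c_2(n)\sum_v c_1(n)^{v-1}$ the univariate remainder enters only through $c_2(n)$, multiplied by powers of $c_1(n)$. Hence Theorem~\ref{thm:poly_bound} holds with $c(n,d)/2$, and $(1+\|Q\|)c(n,d)/2\le\|Q\|c(n,d)$ because $\|Q\|=(\sum_v|w_v|)^d\ge|\sum_v w_v|^d=1$.

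Be aware that the value $c_2(2)<0.114$ used in Section 4 already corresponds to $2^{n+1}$, so for that sharper constant the slack must come from elsewhere. For instance, apply the projection bound on the box spanned by the sample midpoints $\vec\xi_{k+v}$ with $k\sim x$. This box has side $2nh$ and contains $x$ and every point the relevant $Q_k$ sample. It gives the factor $(1+\|Q\|)(2n)^{n+1}$ instead of $(1+\|Q\|)(2n+1)^{n+1}$, and this is $\le\|Q\|(2n+1)^{n+1}$ for the tabulated $n\le4$.

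Finally, your boundary caveat is forced, not optional. For $p=Qf$ with midpoint functionals, a bound with norms over $R$ alone is false. For $d=1$, $n=2$, take $f$ smooth with $f\equiv0$ on $[0,1]$, $f(-h/2)=1$ and $f(-3h/2)=0$; then $Qf(0)=9/16$ while the right-hand side vanishes.
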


Note that we may take $p$ to be the quasi-interpolant defined above. %The constant $c(n,d)$ is derived from the  

Bounds for the derivatives can be found in the paper by de Boor and Fix \cite{de1973spline}. However, we desire bounds for the first partial derivatives whose constants can be computed \textit{a priori}, so we give an alternative derivation using a specific version of the Bramble-Hilbert Lemma with a computable constant.

We denote by $W^{n}_p(\Omega)$ the Sobolev space of functions on the domain $\Omega$ with weak derivatives up to order $n$ under the $L^p$ norm, $1<p\leq\infty$. We adopt the standard notation for multi-indices and weak derivatives. We also adopt the $W^n_p$ semi-norm of a function $u\in W^n_p(\Omega)$ given by %$$|g(x)-g(y)|\leq \omega(|x-y|)\quad\text{whenever $x,y\in\Omega$ and $|x-y|<h$.}$$
$$|u|_{W^n_p(\Omega)}=\left(\sum_{|\alpha|=n}\|D^\alpha u\|_{L^p(\Omega)}^2\right)^{1/2}$$
for $p<\infty$ and 
$$|u|_{W^n_\infty(\Omega)}=\sum_{|\alpha|=n}\|D^\alpha u\|_{L^\infty(\Omega)}$$
for $p=\infty$. We say a domain $\Omega$ is \textit{star-shaped} with respect to $x_0\in\Omega$ if for each $x\in\Omega$, the line segment
$$[x_0,x]=\{tx+(1-t)x_0:t\in[0,1]\}$$
lies in $\Omega$. Let $\mathcal{P}_n$ denote the space of polynomials of degree less than $n$ on $\Omega$, and let $\mathcal{L}^d(B)$ denote the $d$-dimensional Lebesgue measure of a set $B$. We will now state the version of the Bramble-Hilbert Lemma proved by Dur{\'a}n \cite{duran1983polynomial}, which has a computable constant.

\begin{lemma}
\label{lemma:bramble_hilbert}
Suppose $\Omega\subset\mathbb{R}^d$ is an open and bounded set that is star-shaped with respect to every point in a measurable subset of positive measure $B\subseteq\Omega$. Let $p\geq q>1$ and $j<n$. Let $\omega_d$ denote the measure of the unit sphere in $\mathbb{R}^d$. If $u\in W^n_p(\Omega)$, then
\begin{equation}
\inf_{p\in\mathcal{P}_n}|u-p|_{W^j_q(\Omega)}\leq C_{BH}\frac{\mathrm{diam}(\Omega)^{n-j+d/q}}{\mathcal{L}^d(B)^{1/p}}|u|_{W^n_p(\Omega)}
\label{eq:BH_ineq}
\end{equation}
where 
\begin{equation}
C_{BH}=\#\{\alpha:|\alpha|=j\}\frac{n-j}{d^{1/q}}\frac{p}{p-1}\omega_d^{1/q}\left(\sum_{|\beta|=n-j}(\beta!)^{-2}\right)^{1/2}.
\label{eq:BH_constant}
\end{equation}
\end{lemma}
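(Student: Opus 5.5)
The strategy is to follow Durán's argument and keep every constant explicit. The approximating polynomial will be the averaged Taylor polynomial
$$Q^n u(x)=\int_B \phi(y)\,T^n_y u(x)\,dy,\qquad \phi=\mathcal{L}^d(B)^{-1}\mathbbm{1}_B,$$
where $T^n_yu$ is the Taylor polynomial of $u$ about $y$ of degree less than $n$. Averaging over all of $B$ makes $Q^nu$ a polynomial in $\mathcal{P}_n$. So it suffices to bound $|u-Q^nu|_{W^j_q(\Omega)}$ by the right-hand side of (\ref{eq:BH_ineq}). I would do the proof for smooth $u$ first and pass to $W^n_p(\Omega)$ by density, using Meyers--Serrin, since both sides of the inequality are continuous in the $W^n_p$ norm.

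\textbf{Step 1: reduce the derivatives.} For $|\alpha|=j$ we have $D^\alpha T^n_y u=T^{n-j}_y D^\alpha u$, and differentiation commutes with the averaging in $y$. Hence $D^\alpha(u-Q^nu)=D^\alpha u-Q^{n-j}D^\alpha u$. It is therefore enough to prove the case $j=0$, with $n$ replaced by $n-j$ and $u$ replaced by $v=D^\alpha u$, and then sum over the $\#\{\alpha:|\alpha|=j\}$ multi-indices. That sum accounts for the first factor of $C_{BH}$.

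\textbf{Step 2: integral representation.} Fix $x\in\Omega$ and $y\in B$. Star-shapedness puts the segment $[y,x]$ inside $\Omega$, so Taylor's formula with integral remainder along that segment is valid:
$$v(x)-T^{m}_y v(x)=m\sum_{|\beta|=m}\frac{(x-y)^\beta}{\beta!}\int_0^1 s^{m-1}D^\beta v(x+s(y-x))\,ds,\qquad m=n-j.$$
Integrating against $\phi(y)$ and substituting $z=x+s(y-x)$ gives a kernel representation
$$v(x)-Q^mv(x)=\int_\Omega \sum_{|\beta|=m}K_\beta(x,z)\,D^\beta v(z)\,dz .$$
The kernel satisfies $|K_\beta(x,z)|\le \frac{m}{d\,\beta!}\,\mathcal{L}^d(B)^{-1}\,\mathrm{diam}(\Omega)^{d}\,|x-z|^{m-d}$. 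This comes from a polar-coordinate estimate of the $s$-integral, and it is where the factors $m/d$ and $\omega_d$ appear. Applying Cauchy--Schwarz over $\beta$ then produces $\bigl(\sum_{|\beta|=m}(\beta!)^{-2}\bigr)^{1/2}$ multiplied by the $\ell^2$ combination of the $D^\beta v$. This matches the $\ell^2$ convention used in the seminorm.

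\textbf{Step 3: the $L^q$ bound (main obstacle).} What remains is to bound the $L^q$ norm of the Riesz-type potential $x\mapsto\int_\Omega |x-z|^{m-d}|g(z)|\,dz$ with the exact factors $\mathrm{diam}(\Omega)^{m+d/q}\,\mathcal{L}^d(B)^{-1/p}$ and $\frac{p}{p-1}\,\omega_d^{1/q}$. This is the delicate step. I would first apply Hölder in $z$ with exponent $p$. Then I would integrate $|x-z|^{(m-d)p'}$ over a ball of radius $\mathrm{diam}(\Omega)$ in polar coordinates; this is finite since $m\ge1$, and it yields $\omega_d$ together with a factor controlled by $p'=p/(p-1)$. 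Finally I would take the $L^q$ norm in $x$, which contributes $\mathcal{L}^d(\Omega)^{1/q}\le(\omega_d\,\mathrm{diam}(\Omega)^d/d)^{1/q}$. The factor $\mathcal{L}^d(B)^{-1/p}$ should come from interpolating the $\mathcal{L}^d(B)^{-1}$ in the kernel against the restriction of the $y$-integration to $B$. Specifically, I would apply Hölder to the $y$-average before changing variables, so that only $\mathcal{L}^d(B)^{-1/p}$ survives.

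Tracking exponents so they combine to exactly the constant (\ref{eq:BH_constant}) is the fiddly part. If it does not come out exactly, the fallback is to cite Durán's computation for this step, since the lemma is taken from \cite{duran1983polynomial}.
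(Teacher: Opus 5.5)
The paper gives no proof of this lemma; it simply quotes Durán, so your fallback of citing him is exactly the paper's route. As a self-contained argument, though, your proposal has a genuine gap in Step 3. Steps 1 and 2 are correct as computations.

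\emph{Where Step 3 fails.} Hölder in $z$ needs $\int_{|x-z|<\mathrm{diam}(\Omega)}|x-z|^{(m-d)p'}\,dz<\infty$, i.e.\ $(m-d)p'>-d$, i.e.\ $mp>d$. This does not follow from $m\ge 1$: for $d=2$, $m=1$, $p=2$ the integral is $\omega_2\int_0^{\mathrm{diam}(\Omega)} r^{-1}\,dr=\infty$. It is not a bookkeeping issue either. When $mp\le d$ the error $v-Q^mv$ need not be bounded, so no argument that first bounds $|v(x)-Q^mv(x)|$ pointwise and then pays $\mathcal{L}^d(\Omega)^{1/q}$ can work. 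Your alternative (Hölder on the $y$-average, then $z=x+s(y-x)$) produces $\int_0^1 s^{m-1-d/p}\,ds$, which diverges in the same regime. Separately, the kernel bound of Step 2 carries $\mathrm{diam}(\Omega)^d/\mathcal{L}^d(B)$. That ratio is unbounded as $B$ shrinks and cannot be converted into $\mathcal{L}^d(B)^{-1/p}$, so once you pass to that kernel the stated inequality is out of reach.

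Two smaller points. Meyers--Serrin gives no density for $p=\infty$, which is the case the paper actually uses. Also, Cauchy--Schwarz over $\beta$ must be applied after taking $L^p$ norms: for $p<2$, the $L^p$ norm of $(\sum_\beta|D^\beta v|^2)^{1/2}$ exceeds the paper's $\ell^2$-of-norms seminorm.

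\emph{The missing idea: keep the $x$-integration.} Define $T_sg(x)=\mathcal{L}^d(B)^{-1}\int_B|g((1-s)x+sy)|\,dy$ and write
$|v(x)-Q^mv(x)|\le m\,\mathrm{diam}(\Omega)^m\sum_{|\beta|=m}\frac{1}{\beta!}\int_0^1 s^{m-1}\,T_s(D^\beta v)(x)\,ds$,
then use Minkowski in $s$. For fixed $s$, Jensen gives $\|T_sg\|_{L^p(\Omega)}^p\le\mathcal{L}^d(B)^{-1}\int_\Omega\int_B|g((1-s)x+sy)|^p\,dy\,dx$. Now substitute $(x,y)\mapsto(z,w)=((1-s)x+sy,\,x-y)$:
\begin{itemize}
\item the Jacobian has modulus $1$;
\item $z\in\Omega$ by star-shapedness;
\item $|w|\le\mathrm{diam}(\Omega)$.
\end{itemize}
Hence the double integral is at most $\frac{\omega_d}{d}\mathrm{diam}(\Omega)^d\|g\|_{L^p(\Omega)}^p$, uniformly in $s$. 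Combine this with $\|T_sg\|_{L^q}\le\mathcal{L}^d(\Omega)^{1/q-1/p}\|T_sg\|_{L^p}$, $\mathcal{L}^d(\Omega)\le\frac{\omega_d}{d}\mathrm{diam}(\Omega)^d$, and $m\int_0^1 s^{m-1}\,ds=1$. This gives the $j=0$ estimate with constant $(\omega_d/d)^{1/q}\bigl(\sum_{|\beta|=m}(\beta!)^{-2}\bigr)^{1/2}$ for all $1\le q\le p<\infty$. Your Step 1 then gives general $j$ with a constant no larger than $C_{BH}$, since $m\ge 1$ and $p/(p-1)\ge 1$. The case $p=\infty$ follows by applying this with finite exponents $r$ (taking $q=r$ when $q=\infty$) and letting $r\to\infty$, because $Q^mv$ does not depend on $r$.
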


We remark that Dur\'an's proof shows the existence of a polynomial such that the inequality (\ref{eq:BH_ineq}) holds for all $j=0,\dots,n-1$. We will now state and prove error bounds for the partial derivatives of the quasi-interpolant.

\begin{theorem}
Equip each axis of $\mathbb{R}^d$ with a knot sequence of uniform mesh size $h$. Let $f$ be a smooth function on the parameter rectangle $R=D_\xi^n$ and $Qf=\sum_k(Q_kf)b_k$ be the quasi-interpolant of $f$ using degree $n$ B-splines on each axis. Then for each $v=1,2,\dots,d$,
$$\|\partial_v(f-Qf)\|_{L^\infty(R)}\leq (C_{BH,j=1}(2n+1)^nd^{n/2}+2\|Q\|C_{BH,j=0}(2n+1)^{n+1}d^{(n+1)/2})h^n|f|_{W^{n+1}_\infty(R)},$$
where $C_{BH,j=1},C_{BH,j=0}$ are the constants in (\ref{eq:BH_constant}) when $p=q=\infty$ with $j=1,j=0$, respectively.
\label{thm:partial_est}
\end{theorem}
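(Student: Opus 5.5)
The plan is a local argument on each cell, split into two terms by inserting a polynomial supplied by the Bramble--Hilbert Lemma \ref{lemma:bramble_hilbert}. Fix $x\in R$ and a knot hyperrectangle $\tau\subseteq R$ containing $x$. Let $\Omega_x=\bigcup_{k\sim\tau}\supp(b_k)$. This is a cube of side $(2n+1)h$, so it is convex, hence star-shaped with respect to every point of $B=\Omega_x$, and $\mathrm{diam}(\Omega_x)=(2n+1)h\sqrt d$. Apply Dur\'an's version with $p=q=\infty$ and $n+1$ in place of $n$, so that $\mathcal P_{n+1}$ consists of polynomials of degree at most $n$. By the remark following the lemma, there is a single polynomial $P$ that works simultaneously for $j=0$ and $j=1$. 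With $p=q=\infty$ the factors $\mathcal L^d(B)^{1/p}$ and $d/q$ drop out, and we get
\[
\|f-P\|_{L^\infty(\Omega_x)}\le C_{BH,j=0}\,(2n+1)^{n+1}d^{(n+1)/2}h^{n+1}|f|_{W^{n+1}_\infty},
\]
\[
|f-P|_{W^1_\infty(\Omega_x)}\le C_{BH,j=1}\,(2n+1)^{n}d^{n/2}h^{n}|f|_{W^{n+1}_\infty}.
\]
The second bound controls each $\|\partial_v(f-P)\|_\infty$, since the $W^1_\infty$ seminorm is a sum over first partials.

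Next write $\partial_v(f-Qf)=\partial_v(f-P)+\partial_v(P-Qf)$. On $\tau$, polynomial reproduction (property 3 of the quasi-interpolant, which carries over to the tensor product) gives $QP=P$. Hence $P-Qf=Q(P-f)=\sum_{k\sim\tau}Q_k(P-f)\,b_k$. The first term has already been handled by the $j=1$ estimate, so it remains to bound the derivative of this spline.

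Differentiating in $x_v$ uses only the one-dimensional factor in that coordinate. By (\ref{eqn:bsplinederiv}) on a uniform mesh, $\partial_v\sum_k c_kb_k=\frac1h\sum_k(c_k-c_{k-e_v})\tilde b_k$, where the $\tilde b_k$ have degree $n-1$ in coordinate $v$ and are nonnegative and sum to at most $1$. Each coefficient satisfies $|c_k|=|Q_k(P-f)|\le\|Q\|\,\|f-P\|_{L^\infty(\Omega_x)}$ by local boundedness. Therefore each difference is bounded by $2\|Q\|\,\|f-P\|_\infty$, and multiplying by $1/h$ gives
\[
2\|Q\|\,C_{BH,j=0}(2n+1)^{n+1}d^{(n+1)/2}h^n|f|_{W^{n+1}_\infty}.
\]
Adding the two pieces and taking the supremum over $x\in R$ yields the stated bound.

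The main bookkeeping obstacle is ensuring the Bramble--Hilbert domain contains the supports of all relevant B-splines, so that both the quasi-interpolant functionals and the polynomial estimate live on $\Omega_x$. A related issue is that $f$ must be defined and smooth there; I would take $|f|_{W^{n+1}_\infty}$ over this enlarged region, or note that $R$ is understood to contain it. The other point needing care is that the $\alpha^n_k=n/h$ factor does not inflate the constant. For this I would use that the degree $n-1$ B-splines sum to at most one, together with the sharper bound $|c_k-c_{k-e_v}|\le 2\max|c_k|$; the factor $n$ I expect to be absorbed by the normalization in (\ref{eqn:bsplinederiv}), and I would check this carefully against the stated constant.
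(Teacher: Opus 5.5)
Your proposal is correct and follows the paper's proof essentially step for step. It uses the same single Bramble--Hilbert polynomial on the $(2n+1)h$-cube of relevant supports, the same split $\partial_v(f-P)+\partial_v Q(P-f)$, the $1/h$ coefficient-difference formula for the derivative, local boundedness giving $2\|Q\|\,\|f-P\|_\infty$, and the partition of unity of the degree-$(n-e_v)$ splines. Your factor-of-$n$ worry goes away because $\alpha^n_k=n/(\xi_{k+n}-\xi_k)=n/(nh)=1/h$ exactly, and your caveat that $\Omega_x$ can extend beyond $R$ (so the seminorm should really be taken over the enlarged region) is a point the paper passes over when it bounds $|f|_{W^{n+1}_\infty(D_x)}$ by $|f|_{W^{n+1}_\infty(R)}$.
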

\begin{proof}
%It suffices to prove the result on each hypercube of side length $h$ in the parameter rectangle. Let $\Omega$ denote such a hypercube and note that $\mathrm{diam}(\Omega)=h\sqrt{d}$. 
For $x\in R$, let $D_x$ denote the union of the supports of the splines relevant to $x$. Then $D_x$ is a hypercube of diameter $\sqrt{d}(2n+1)h$. Let $p$ be the polynomial from the Bramble-Hilbert Lemma \ref{lemma:bramble_hilbert} on this hypercube, then 
\begin{align*}
\|\partial_v(f-Qf)\|_{L^\infty(D_x)}&\leq \|\partial_v(f-p)\|_{L^\infty(D_x)}+\|\partial_v(p-Qf)\|_{L^\infty(D_x)}\\
&=\|\partial_v(f-p)\|_{L^\infty(D_x)}+\|\partial_v(Q(p-f))\|_{L^\infty(D_x)}.
\end{align*}
because $Q$ is linear and $Qp=p$ for polynomials of degree at most $n$. We estimate $\|\partial_v(f-p)\|_{L^\infty(D_x)}$ using (\ref{eq:BH_ineq}),
\begin{align*}
\|\partial_v(f-p)\|_{L^\infty(D_x)}\leq |f-p|_{W^1_\infty(D_x)}&\leq C_{BH,j=1}d^{n/2}(2n+1)^{n}h^n|f|_{W^{n+1}_\infty(D_x)}\\
&\leq C_{BH,j=1}d^{n/2}(2n+1)^nh^n|f|_{W^{n+1}_\infty(R)}.
\end{align*}
To estimate $\|\partial_v(Q(p-f))\|_{L^\infty(\Omega)}$, we employ the differentiation result found on page 144 of \cite{hollig2013approximation}, which yields
$$\partial_v\left(\sum_{k\sim x}(Q_kf)b^n_k\right)=\sum_{k\sim \Omega}\frac{1}{h}(Q_k(p-f)-Q_{k-e_v}(p-f))b_k^{n-e_v}.$$
Here, we have reintroduced the notation of the degree in the exponent of the B-spline. The unit vector in the direction of the $v$-th axis is denoted by $e_v$, and we write $k-e_v=(k_1,k_2,\dots,k_v-1,\dots,k_d)$ and $n-e_v=(n,n,\dots,n-1,\dots,n)$, where the $1$ is subtracted in the $v$-th coordinate. We thus obtain
\begin{align*}
\|\partial_v(Q(p-f))\|_{L^\infty(D_x)}&\leq \max_{x\in D_x}\left\{\sum_{k\sim x}\frac{1}{h}|Q_k(p-f)-Q_{k-e_v}(p-f)|b_k^{n-e_v}(x)\right\}.
\end{align*}
For each $k\sim x$, let $S_k$ denote the support of the spline $b_k^{n}(x)$. Using the local boundedness of the quasi-interpolant, for any $k$, $|Q_k(p-f)|\leq\|Q\|\|p-f\|_{L^\infty(S_k)}$, where $\|Q\|$ is the sum of the absolute values of the weights defining $Q_kf$. Applying Lemma \ref{lemma:bramble_hilbert} we obtain
\begin{align*}
\|p-f\|_{L^\infty(S_k)}\leq\|p-f\|_{L^\infty(D_x)}&\leq C_{BH,j=0}(2n+1)^{n+1}d^{(n+1)/2}h^{n+1}|f|_{W^{n+1}_\infty(D_x)}\\
&\leq C_{BH,j=0}(2n+1)^{n+1}d^{(n+1)/2}h^{n+1}|f|_{W^{n+1}_\infty(R)}.
\end{align*}
Using the triangle inequality we can therefore bound 
$$|Q_k(p-f)-Q_{k-e_v}(p-f)|\leq 2\|Q\|C_{BH,j=0}(2n+1)^{n+1}d^{(n+1)/2}h^{n+1}|f|_{W^{n+1}_\infty(R)},$$
which is independent of $x$. Returning to the estimation above, 
\begin{align*}
\|\partial_v(Q(p-f))\|_{L^\infty(D_x)}&\leq \max_{x\in\Omega}\left\{\sum_{k\sim x}\frac{1}{h}|Q_k(p-f)-Q_{k-e_v}(p-f)|b_k^{n-e_v}(x)\right\}\\
&\leq 2\|Q\|C_{BH,j=0}(2n+1)^{n+1}d^{(n+1)/2}h^{n}|f|_{W^{n+1}_\infty(R)}\max_{x\in\Omega}\left\{\sum_{k\sim x}b_k^{n-e_v}(x)\right\}.
\end{align*}
Since the B-splines form a partition of unity, $\sum_{k\sim x}b_k^{n-e_v}(x)=1$, and combining with our earlier estimate we obtain the desired result.

%Using the triangle inequality and the fact that each $Q_k$ is locally bounded, $$|Q_k(p-f)-Q_{k-e_v}(p-f)|\leq\|Q\|\|p-f\|_{L^\infty(R)},$$ and thus
%$$\sum_{k\sim\Omega}\frac{1}{h}|Q_k(p-f)-Q_{k-e_v}(p-f)|b_k^{n-e_v}(x)\leq \frac{2\|Q\|}{h}\|p-f\|_{L^\infty(R)}$$
\end{proof}

\begin{comment}
In the course of the proof above, we obtained another way to estimate $\|f-Qf\|_{L^\infty(R)}$. Given $x\in R$, let $p$ and $D_x$ be defined as in the proof above. Using the fact that $Q$ is linear and reproduces polynomials of order $n$, we obtain
$$
\|f-Qf\|_{L^\infty(D_x)}\leq\|f-p\|_{L^\infty(D_x)}+\|p-Qf\|_{L^\infty(D_x)}=\|f-p\|_{L^\infty(D_x)}+\|Q(p-f)\|_{L^\infty(D_x)}.
$$
By the local boundedness of $Q$, we obtain $\|Q(p-f)\|_{L^\infty(D_x)}\leq\|Q\|\|p-f\|_{L^\infty(D_x)}$. Combining this with the estimate for $\|f-p\|_{L^\infty(D_x)}$ in the proof, we obtain
\begin{equation}
\|f-Qf\|_{L^\infty(D_x)}\leq (1+\|Q\|)C_{BH,j=0}(2n+1)^{n+1}d^{(n+1)/2}h^{n+1}|f|_{W^{n+1}_\infty(R)}.
\label{eq:alt_bound}
\end{equation}

For fixed $n$, the estimate (\ref{eq:alt_bound}) may be preferable to the estimate from Theorem \ref{thm:quasi_error_higher_dim} when the dimension $d$ is large, since $c(n,d)\|Q\|(2n+1)^{n+1}$ grows exponentially in $d$, whereas $(1+\|Q\|)C_{BH,j=0}(2n+1)^{n+1}d^{(n+1)/2}$ grows as a polynomial in $d$. For example, when $n=2$ and $d\geq7$, we have $(1+\|Q\|)C_{BH,j=0}(2n+1)^{n+1}d^{(n+1)/2}<c(n,d)\|Q\|(2n+1)^{n+1}$. Which estimate is ultimately better will come down to the values of $\sum_{v=1}^d\|\partial^{n+1}f\|_\infty$ and $|f|_{W^{n+1}_\infty}$, which have not been derived in full generality in the literature (and which we do not attempt to derive in this paper). For our numerical results in Section 4, Theorem \ref{thm:quasi_error_higher_dim} provides a much smaller constant.

\end{comment}

As an immediate corollary of Theorem \ref{thm:partial_est}, we obtain the following estimate for the gradient.

\begin{corollary}
Let $$C:=(C_{BH,j=1}(2n+1)^nd^{n/2}+2\|Q\|C_{BH,j=0}(2n+1)^{n+1}d^{(n+1)/2}),$$ using the notation of Theorem \ref{thm:partial_est}. Then for any point $x\in R$,
$$|\vec{\nabla}(f-Qf)(x)|\leq\sqrt{d}Ch^n|f|_{W^{n+1}_\infty(R)}.$$
\label{cor:gradient_est}
\end{corollary}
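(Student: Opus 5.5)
The plan is to read the estimate off Theorem \ref{thm:partial_est} componentwise and then pass from the bound on each component to the Euclidean norm of the gradient. Fix $x\in R$. The gradient of the error is the vector
$$\vec{\nabla}(f-Qf)(x)=\bigl(\partial_1(f-Qf)(x),\dots,\partial_d(f-Qf)(x)\bigr).$$
By Theorem \ref{thm:partial_est}, for each $v=1,\dots,d$ we have
$$|\partial_v(f-Qf)(x)|\leq\|\partial_v(f-Qf)\|_{L^\infty(R)}\leq Ch^n|f|_{W^{n+1}_\infty(R)},$$
where $C$ is exactly the constant named in the corollary. The key point is that this right-hand side does not depend on $v$: the Bramble--Hilbert constants $C_{BH,j=1}$ and $C_{BH,j=0}$, the quantity $\|Q\|$, and the powers of $(2n+1)$ and $d$ are the same for every coordinate direction.

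Next, sum the squares of the $d$ components, each bounded by the same quantity:
$$|\vec{\nabla}(f-Qf)(x)|^2=\sum_{v=1}^d|\partial_v(f-Qf)(x)|^2\leq d\,C^2h^{2n}|f|_{W^{n+1}_\infty(R)}^2.$$
Taking square roots gives the claimed bound $\sqrt{d}\,Ch^n|f|_{W^{n+1}_\infty(R)}$.

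There is only one step needing care. The $L^\infty$ bound in Theorem \ref{thm:partial_est} must control the pointwise value at $x$. This holds because $f$ is smooth and $Qf$ is piecewise polynomial. Where $Qf$ is only piecewise differentiable, that is, on knot hyperplanes when $n=1$, one can use the one-sided derivative or argue by continuity of the estimate. For $n\geq2$ the spline $Qf$ is $C^1$, so no issue arises. Apart from this, there is no real obstacle; the argument is the elementary inequality $|y|_2\leq\sqrt{d}\,|y|_\infty$ for $y\in\mathbb{R}^d$.
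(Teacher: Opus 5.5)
Your proposal is correct and is essentially the paper's proof. The paper bounds each component by Theorem \ref{thm:partial_est}, whose constant is the same for every $v$. It then sums the $d$ squared bounds and takes the square root to get the $\sqrt{d}$ factor. Your remark about $L^\infty$ versus pointwise control, using continuity of $\partial_v Qf$ for $n\geq 2$, is a detail the paper passes over silently.
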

\begin{proof}
$$|\vec{\nabla}(f-Qf)(x)|\leq \left(\sum_{v=1}^d\|\partial_v(f-Qf)\|_{L^\infty(R)}^2\right)^{1/2}\leq \left(\sum_{v=1}^dC^2h^{2n}|f|^2_{W^{n+1}_\infty(R)}\right)^{1/2}=\sqrt{d}Ch^n|f|_{W^{n+1}_\infty(R)}.$$
\end{proof}

%\begin{theorem}
%\label{thm:quasi_deriv_error_higher_dim}
%Let $\Omega$ be a region in $\mathbb{R}^d$ and $f\in W^{n+1}_\infty(\Omega)\cap C^{n+1}(\Omega)$. Suppose each axis has been equipped with a uniform knot sequence of mesh size $h$. Let $Qf$ be a quasi-interpolant of $f$, $e=f-Qf$, and $\alpha$ be a multi-index with $|\alpha|\leq n+1$. Then there exists constants $K_\alpha$ such that 
%$$\|D^\alpha e\|_{L^\infty(\Omega)}\leq K_\alpha h^{n+1-|\alpha|}\max_{|\gamma|=n+1}\omega(D^\gamma f;h;\Omega).$$
%\end{theorem} 

\subsection{Conformal Iterated Function Systems}

%Of contemporary interest is the rigorous estimation of the Hausdorff dimensions of fractals that arise from the limit sets of conformal iterated function 

In this section, we will introduce the background of conformal iterated function systems.

\begin{definition}
An \textit{iterated function system} $\mathcal{C}=\{X,E,\Phi\}$ is a triple consisting of a compact metric space $(X,d)$, a countable index set $E$, and a collection $\Phi=\{\phi_e:X\to X\}_{e\in E}$ of injective, uniformly contracting mappings. That is, there is a constant $0<L<1$ such that
$$|\phi_e(x)-\phi_e(y)|\leq L|x-y|$$
for all $e\in E$ and $x,y\in X$. 

An iterated function system $(X,E,\Phi)$ is called \textbf{conformal} provided $X$ is a compact and connected subset of $\mathbb{R}^d$ and the following four conditions are satisfied:
\begin{enumerate}
\item $X=\overline{\mathrm{int}(X)}$.
\item (Open Set Condition) For all $e_1,e_2\in E$ such that $e_1\neq e_2$, $\phi_{e_1}(\mathrm{int}(X))\cap\phi_{e_2}(\mathrm{int}(X))=\emptyset$.
\item There exists an open and connected set $V$ such that $X\subset V\subset\mathbb{R}^d$ such that for each $e\in E$, $\phi_e$ extends to a conformal $C^{1+\varepsilon}$ diffeomorphism on $V$.
\item (Bounded Distortion Property) There exists a constant $K\geq 1$ such that $|D\phi_\omega(x)|\leq K|D\phi_\omega(y)|$ for each $\omega\in E^*$ and $x,y\in X$, where $|D\phi_\omega(x)|$ denotes the maximum norm of the derivative.
\end{enumerate}
\end{definition}

We will adopt the notation of symbolic dynamics for this discussion. A \textit{word} in $E$ is a finite or countably infinite string of elements in $E$. For $n\in\mathbb{N}$, let $E^n$ denote the set of words of length $n$, let $E^*=\bigcup_{n=1}^\infty E^n$ denote the set of finite words, let $E^{\mathbb{N}}$ denote the set of infinite words, and let $E^0=\emptyset$. For $\omega\in E^*$, we use $|\omega|$ to denote the length of $\omega$. For $n\in\mathbb{N}$ and $\omega=\omega_1\omega_2\dots\in E^{\mathbb{N}}$, we let $\omega|_n=\omega_1\omega_2\dots\omega_n$. For a word $\omega=\omega_1\omega_2,\dots,\omega_{|\omega|}\in E^*$, we define $\phi_\omega:X\to X$ by
$$\phi_{\omega}=\phi_{\omega_1}\circ\phi_{\omega_2}\circ\dots\circ\phi_{\omega_{|\omega|}}.$$
Because the collection $\{\phi_e\}_{e\in E}$ is uniformly contracting, for each $\omega\in E^{\mathbb{N}}$, it follows that the sequence of compact sets $\phi_{\omega|_n}(X)$ satisfies $\phi_{\omega|_{n+1}}(X)\subset\phi_{\omega|_n}(X)$ for each $n\in\mathbb{N}$ and $\mathrm{diam}(\phi_{\omega|_n}(X))\to0$ as $n\to\infty$. Thus, Cantor's intersection theorem implies that $\bigcap_{n=1}^\infty \phi_{\omega|_n}(X)$ is a single point, and it is therefore natural to define the projection $\Pi:E^{\mathbb{N}}\to X$ by
$$\Pi(\omega)=\bigcap_{n=1}^\infty \phi_{\omega|_n}(X).$$
There exists a unique set $J$, called the \textit{limit set} associated to $\mathcal{C}$, such that $J=\Pi(J)$. 

%Rigorous estimation of the Hausdorff dimensions of the limit sets of various conformal iterated function systems
\begin{comment}
To define the Hausdorff dimension of a set, we first need to define the Hausdorff measure on a metric space $(X,d)$.

\begin{definition}
Let $(X,d)$ be a metric space and let $s\geq0$. For $\delta>0$ and $E\subset X$, we define
$$\mathcal{H}^s_\delta(E)=\inf\left\{\sum_{i\in I}\mathrm{diam}(A_i)^s:\text{ $I$ is countable, $A_i\subset X$, $\mathrm{diam}(A_i)<\delta$ for all $i\in I$, and $E\subset\bigcup_{i\in I} A_i$}\right\}$$
Let $\mathcal{H}^s(E)=\lim_{\delta\to0}\mathcal{H}^s_\delta(E)$, then the set function $\mathcal{H}^s(E):\mathcal{P}(X)\to[0,\infty]$ is the \textit{s-dimensional Hausdorff measure} of the set $E$.
\end{definition}

The definition above indeed defines a measure on $(X,d)$, and we refer readers to (CITATION) for the details. We will now define the Hausdorff measure of a set.

\begin{definition}
Let $(X,d)$ be a metric space and let $E\subset X$. The \textit{Hausdorff dimension} $\mathrm{dim}_{\mathcal{H}}(E)$ of $E$ is
\begin{align*}
\mathrm{dim}_{\mathcal{H}}(E)&=\sup\{s\geq0:\mathcal{H}^s(E)>0\}\\
&=\sup\{s\geq0:\mathcal{H}^s(E)=\infty\}\\
&=\inf\{s\geq0:\mathcal{H}^s(E)=0\}\\
&=\inf\{s\geq0:\mathcal{H}^s(E)<\infty\}
\end{align*}
\end{definition}
\end{comment}

Given a conformal iterated function system, a \textit{Perron-Frobenius operator} (also known as a \textit{transfer operator}) on the system is defined as 
\begin{equation}
L_sf(x)=\sum_{e\in E}\|D\phi_e(x)\|^sf(\phi_e(x)),
\label{eq:perron}
\end{equation}
where $x\in X$ and $s>0$ is a parameter. We use $\|\cdot\|$ to denote the supremum (maximum) norm. 

We will now briefly discuss the thermodynamic formalism employed by Mauldin and Urba{\'n}ski to study this operator \cite{mauldin1996dimensions}. Our discussion here will be limited to the results we will use later, and the preliminaries needed to understand them. For a more thorough treatment, we refer readers to the above-cited paper, and the book by the above-cited authors \cite{mauldin2003graph}.

Given a conformal iterated function system, let $J$ denote its limit set and fix a parameter $s\geq0$. A Borel probability measure $m$ on $J$ is said to be \textit{$s$-conformal} provided the following two conditions are satisfied. First, for every $\omega\in E^*$ and Borel set $A\subset X$,
$$m(\phi_\omega(A))=\int_A|D\phi_{\omega}(x)|^s\,dm(x).$$
Second, for any pair of incomparable words $\omega,\tau\in E^*$ (meaning neither word is an extension of the other),
$$m(\phi_\omega(X)\cap\phi_\tau(X))=0.$$
A conformal iterated function system is said to be \textit{regular} if there exists an $s$-conformal measure. We will now partially state an important result by Mauldin and Urba{\'n}ski, whose full statement and proof can be found in \cite{mauldin2003graph}.
\begin{theorem}
Let $\mathcal{C}$ be a regular conformal iterated function system and $m$ be the corresponding $s$-conformal measure. Then there exists a unique continuous function $\rho:X\to[0,\infty)$ such that 
$$L_s(\rho)=\rho\quad\text{and}\quad \int_X\rho\,dm=1,$$
where $L_s$ is the Perron-Frobenius operator (\ref{eq:perron}). Moreover,
$$K^{-s}\leq\rho\leq K^s,$$
where $K$ constant in the bounded distortion property.
\label{thm:CIFS_props}
\end{theorem}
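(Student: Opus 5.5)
The plan is to follow the standard Ruelle--Perron--Frobenius route of Mauldin and Urba\'nski: obtain $\rho$ as the density of an $L_s$-invariant measure relative to the conformal measure $m$, derived from iterates of $L_s$ applied to the constant function $1$. The first step is the duality identity
$$\int_X L_s f\,dm=\int_X f\,dm ,$$
valid for every bounded Borel $f$. It follows from $s$-conformality. Write
$$\int_X L_s f\,dm=\sum_{e}\int_X |D\phi_e|^s\,(f\circ\phi_e)\,dm .$$
By the first conformality condition, each term equals $\int_{\phi_e(X)} f\,dm$; for indicators this is immediate, and for general $f$ it follows by the usual monotone class argument. The sets $\phi_e(X)$ pairwise intersect in $m$-null sets and cover $J\supseteq\supp(m)$. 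Summing over $e$ therefore gives $\int_X f\,dm$. The same identity holds for $L_s^k$, which acts by summing over words in $E^k$.

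The second step is the a priori bounds. Iterating the definition of $L_s$ gives
$$L_s^k1(x)=\sum_{\omega\in E^k}|D\phi_\omega(x)|^s .$$
By the bounded distortion property, $L_s^k1(x)\le K^s L_s^k1(y)$ for all $x,y$. Duality gives $\int L_s^k1\,dm=1$, so for each $k$ there is a point where $L_s^k1\ge1$ and a point where $L_s^k1\le1$. Hence $K^{-s}\le L_s^k 1\le K^s$ uniformly in $k$. The third step is equicontinuity. Since each $\phi_e$ is a conformal $C^{1+\varepsilon}$ map with uniform contraction, the standard bounded-distortion-of-logarithms argument gives a H\"older estimate
$$\bigl|\log|D\phi_\omega(x)|-\log|D\phi_\omega(y)|\bigr|\le C|x-y|^{\varepsilon},$$
uniformly over words $\omega$. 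Combined with the upper bound, this makes the family $\{L_s^k1\}$ uniformly bounded and equicontinuous on $X$.

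Now define the Ces\`aro averages $\rho_N=\frac1N\sum_{k=0}^{N-1}L_s^k1$. By Arzel\`a--Ascoli, some subsequence converges uniformly to a continuous function $\rho$ with $K^{-s}\le\rho\le K^s$. Duality gives $\int\rho\,dm=1$. Moreover
$$L_s\rho_N-\rho_N=\tfrac1N\bigl(L_s^N1-1\bigr)\to0 ,$$
and $L_s$ is continuous under uniform convergence because $L_s1\le K^s$ is bounded. Passing to the limit yields $L_s\rho=\rho$.

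The main obstacle is uniqueness, which I would handle via a positive-cone / Hilbert-metric argument. Let $g$ be another continuous fixed point with $\int g\,dm=1$. Set $c=\min_X g/\rho$, attained at a point $x_0$, and put $u=g-c\rho\ge0$. Then $u$ is a continuous fixed point with $u(x_0)=0$. Evaluating $u(x_0)=L_s^k u(x_0)=\sum_{\omega}|D\phi_\omega(x_0)|^s u(\phi_\omega(x_0))$ forces $u$ to vanish at every $\phi_\omega(x_0)$. These points are dense in $J$, so $u=0$ on $J$, hence $m$-a.e., and then $\int u\,dm=0$ forces $c=1$. Extending $u=0$ from $J$ to all of $X$ uses $u=L_s^k u$ together with continuity: $L_s^ku(x)$ only samples $u$ on $\phi_\omega(X)$, which shrinks toward $J$, and the sup of $u$ there tends to $0$. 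This yields $g=\rho$ on $X$. Nonnegativity of $g$ is part of the hypothesis on the target space $[0,\infty)$.
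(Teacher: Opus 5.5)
Your proof is correct. The paper gives no proof of this theorem---it quotes the result from Mauldin and Urba\'nski---so the only comparison is with that source.

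Your existence argument is the standard Mauldin--Urba\'nski mechanism. It has four steps: the duality $\int_X L_s f\,dm=\int_X f\,dm$ from $s$-conformality; the uniform bounds $K^{-s}\le L_s^k1\le K^s$ from bounded distortion; equicontinuity of $\{L_s^k1\}$; and Arzel\`a--Ascoli applied to Ces\`aro means.

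Your uniqueness argument is a direct minimum principle. It is self-contained, needs no ergodic theory, and never uses $g\ge 0$, so it gives uniqueness among all continuous real fixed points with $\int g\,dm=1$. The alternative route would go through uniqueness of the invariant measure absolutely continuous with respect to $m$. That route only yields $g=\rho$ $m$-a.e.\ (hence on $\overline J=\supp(m)$ by continuity), and requires $g\ge 0$. It would still need your final step, which pushes $u=0$ off $\overline J$ using $u=L_s^ku$, $L_s^k1\le K^s$ and $\mathrm{diam}\,\phi_\omega(X)\to 0$.

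Two small points should be tidied.

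First, the H\"older bound on $\log|D\phi_\omega|$ uniform in $\omega$ needs a H\"older constant for $\log|D\phi_e|$ on $X$ that is uniform in $e\in E$. For finite $E$ this follows from the $C^{1+\varepsilon}$ hypothesis and compactness of $X$. For infinite $E$ (at least when $d=1$) it is an additional hypothesis, which Mauldin and Urba\'nski build into their definition; it is not a consequence of the per-map $C^{1+\varepsilon}$ condition as stated in the paper.

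Second, $\supp(m)\subseteq J$ is false in general for infinite alphabets. For continued fractions with $E=\{2,3,\dots\}$, one has $0\in\supp(m)$ but $0\notin\bigcup_e\phi_e([0,1])$. What your duality step actually uses is $m(J)=1$ together with $J\subseteq\bigcup_e\phi_e(X)$, and that is what you should cite.
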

%Under certain assumptions (which are satisfied by the systems at study here), Mauldin and Urba{\'n}ski showed that there exists a parameter $s^*$ such that $L_{s^*}$ has a strictly positive eigenfunction $f_{s^*}$ such that the eigenvalue associated to $f_{s^*}$ is maximal and equal to 1 \cite{mauldin1996dimensions}. Moreover,

Our attention will be focused on systems of continued fractions in $\mathbb{R}$ and $\mathbb{R}^d$ for $d\geq 2$. In 1D, the system is as follows:

Let $X=[0,1]$ with the Euclidean metric and $E\subset\mathbb{N}$. Define $\phi_e:X\to X$ for $e\in \mathbb{N}$ by
$$\phi_e(x)=\frac{1}{x+e}.$$
For computation, we will need bounds on the constant $K$ from the bounded distortion property. If $1\in E$, then $K=4$, otherwise if $\min E=k\geq2$, then $K\leq\exp(\frac{2}{k^2-1})$ \cite{chousionis2020dimension}. Moreover, we need bounds for $\|f^{(j)}_s\|_\infty$, the maximum norm of the $j$-th derivative of the eigenfunction from the Perron-Frobenius operator (\ref{eq:perron}). The following result of Falk and Nussbaum—which we state in less than full generality for simplicity—provides such bounds \cite{falk2018c}. 
\begin{theorem}
Let $E\subset\mathbb{N}$ be a finite set of natural numbers and let $\Gamma=\max E$. Let $f_s$ be the eigenfunction from the Perron-Frobenius operator (\ref{eq:perron}). Then for any integer $j\geq1$,
\begin{equation}
(2s)(2s+1)\cdots(2s+j-1)(\Gamma+2)^{-j}\leq (-1)^j\frac{f_s^{(j)}(x)}{f_s(x)}\leq (2s)(2s+1)\cdots(2s+j-1),\quad x\in[0,1]
\label{eq:deriv_bounds_1D}
\end{equation}
\label{lem:deriv_bounds_1D}
\end{theorem}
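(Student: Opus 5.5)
The plan is to prove the bounds (\ref{eq:deriv_bounds_1D}) for the iterates $L_s^k\mathbf{1}$ by computing their derivatives term by term, and then to pass to the limit $k\to\infty$, where suitably normalized iterates converge to $f_s$. The ratio $(-1)^jf^{(j)}/f$ is invariant under positive scaling, so normalization causes no trouble.

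\textbf{Step 1: each term of $L_s^k\mathbf{1}$.} Expanding the iterate over words gives
$$L_s^k\mathbf{1}(x)=\sum_{\omega\in E^k}|D\phi_\omega(x)|^s .$$
For the continued fraction maps, $\phi_\omega$ is a M\"obius map
$$\phi_\omega(x)=\frac{p_{k-1}x+p_k}{q_{k-1}x+q_k},$$
where $p_i/q_i$ are the convergents of $[\omega_1,\dots,\omega_k]$ and $|p_{k-1}q_k-p_kq_{k-1}|=1$. Hence $|D\phi_\omega(x)|^s=(cx+d)^{-2s}$ with $c=q_{k-1}$ and $d=q_k$. For $g(x)=(cx+d)^{-2s}$, direct differentiation gives
$$(-1)^j\frac{g^{(j)}(x)}{g(x)}=(2s)(2s+1)\cdots(2s+j-1)\Big(\frac{c}{cx+d}\Big)^j .$$

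\textbf{Step 2: bounding $c/(cx+d)$.} I will show that $\frac{1}{\Gamma+2}\le \frac{c}{cx+d}\le 1$ on $[0,1]$.
\begin{itemize}
\item Upper bound: $q_k\ge q_{k-1}$, so $c/(cx+d)\le c/d\le 1$.
\item Lower bound: $c/(cx+d)\ge c/(c+d)$ on $[0,1]$. The recursion $q_k=\omega_kq_{k-1}+q_{k-2}\le \Gamma q_{k-1}+q_{k-1}$ gives $d\le(\Gamma+1)c$, hence $c/(c+d)\ge 1/(\Gamma+2)$.
\item Base case $k=1$: here $c=1$ and $d=e\le\Gamma$, so the same bound holds.
\end{itemize}
Each term $g_\omega$ is positive and satisfies $\alpha_j g_\omega\le(-1)^jg_\omega^{(j)}\le\beta_j g_\omega$, where $\alpha_j$ and $\beta_j$ are the constants in (\ref{eq:deriv_bounds_1D}). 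Summing over $\omega$ shows that $L_s^k\mathbf{1}$ satisfies the same two-sided inequalities, for every $k$ and $j$.

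\textbf{Step 3: passage to the limit (the main obstacle).} Let $u_k=L_s^k\mathbf{1}/\|L_s^k\mathbf{1}\|_\infty$.
\begin{itemize}
\item By the Perron--Frobenius/Mauldin--Urba\'nski theory behind Theorem \ref{thm:CIFS_props}, $L_s$ has a simple dominant eigenvalue with positive eigenfunction and a spectral gap for finite $E$. Therefore $u_k\to f_s/\|f_s\|_\infty$ uniformly on $[0,1]$.
\item Step 2 gives $|u_k^{(j)}|\le\beta_j u_k\le\beta_j$ for every $j$. So each sequence $(u_k^{(j)})$ is uniformly bounded and equicontinuous (its derivative is bounded by $\beta_{j+1}$).
\item By Arzel\`a--Ascoli and the uniqueness of the uniform limit, $u_k^{(j)}\to (f_s/\|f_s\|_\infty)^{(j)}$ uniformly for each $j$.
\end{itemize}
The inequalities $\alpha_ju_k\le(-1)^ju_k^{(j)}\le\beta_ju_k$ are closed under uniform convergence, so they pass to the limit. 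Dividing by $f_s>0$ yields (\ref{eq:deriv_bounds_1D}).

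If one wants to avoid quoting the spectral gap, an alternative is to run a cone argument. One checks directly that $L_s$ maps the closed cone of functions satisfying these derivative ratio bounds into itself; this follows from the same computation with $c=1$, $d=e$ applied to a general member of the cone. The eigenfunction is then obtained in this cone by a Krein--Rutman/Birkhoff-type argument and identified with $f_s$ by uniqueness in Theorem \ref{thm:CIFS_props}.
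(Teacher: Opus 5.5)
The paper gives no proof of this statement; it quotes \cite{falk2018c}. Your main argument is correct and follows the same scheme the paper uses for its two-dimensional analogue, Theorem~\ref{thm:third_bounds}. That scheme writes $|D\phi_\omega(x)|^s=q_{k-1}^{-2s}(x+\gamma_\omega)^{-2s}$ with $\gamma_\omega=q_k/q_{k-1}\in[1,\Gamma+1]$, bounds each term, and passes to the limit through the ratio formula (\ref{eq:fs_deriv}). Your Step~3 justifies that limit passage explicitly instead of citing it.

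The one thing to discard is your closing alternative. The cone cut out by these $x$-independent ratio bounds is not $L_s$-invariant. The reason is that $f\circ\phi_e$ is increasing for decreasing $f$, so it partially cancels the decay of the weight $(x+e)^{-2s}$.

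A concrete counterexample: take $E=\{1\}$ and $f(y)=(y+2)^{-4s}$. Writing $(2s)_j=(2s)(2s+1)\cdots(2s+j-1)$, this $f$ satisfies $(2s)_j3^{-j}\le(-1)^jf^{(j)}/f\le(2s)_j$ on $[0,1]$ for every $j$. But $L_sf(x)=(x+1)^{2s}(2x+3)^{-4s}$ gives $-(L_sf)'(1)/L_sf(1)=3s/5<2s/3$, violating the $j=1$ lower bound.

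The cone that is invariant is the closed convex cone generated by $(x+\gamma)^{-2s}$ with $\gamma\in[1,\Gamma+1]$. Indeed, $L_s[(\cdot+\gamma)^{-2s}](x)=\gamma^{-2s}\sum_{e\in E}(x+e+\gamma^{-1})^{-2s}$, and $e+\gamma^{-1}\in[1,\Gamma+1]$. This is exactly the structure your main proof already exploits.
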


In $\mathbb{R}^d$, we let $X=\{y\in\mathbb{R}^d:|y-(1/2,0,0,\dots,0)|\leq 1/2\}$, where $|\cdot |$ denotes the Euclidean metric, and we let $E\subset\mathbb{N}\times \mathbb{Z}^{d-1}$. Let $x=(x_1,x_2,\dots,x_d)\in X$ and define $\phi_e:X\to X$ for $e=(e_1,e_2,\dots,e_d)\in E$ as
$$\phi_e(x)=\frac{x+e}{|x+e|^2}=\left(\frac{x_1+e_1}{|x+e|^2},\frac{x_2+e_2}{|x+e|^2},\dots,\frac{x_d+e_d}{|x+e|^2}\right).$$
Again we need bounds on the constant $K$ from the bounded distortion property. In $\mathbb{R}^2$, we have the bound $K\leq4$ \cite{mauldin1996dimensions}, which will be sufficient for our numerical examples.%Here, we can take $K\leq 4$ for all dimensions (cite upcoming paper?). As in the one-dimensional case, we also need bounds for $\|\partial_x^{(j)}f_s\|_\infty$ and $\|\partial_y^{(j)}f_s\|_\infty$. The following result of Falk and Nussbaum—which we also state in less than full generality for simplicity—provides the bounds we need for our discussion of two-dimensional continued fractions in Section 4 \cite{falk2018new}. 

As in the one-dimensional case, we need bounds on the partial derivatives of $f_s$ in terms of $f_s$. In particular, we require bounds for the single partial derivatives $\|\partial_xf_s\|_\infty$ and $\|\partial_yf_s\|_\infty$, as well as bounds for all third-order partial derivatives. The following result of Falk and Nussbaum—which we also state in less than full generality for simplicity—provides most of the bounds we need for our discussion of two-dimensional continued fractions in Section 4 \cite{falk2018new}. 
\begin{theorem}
Let $E\subset\mathbb{N}\times\mathbb{Z}$ be finite. Let $f_s$ be the eigenfunction from the Perron-Frobenius operator (\ref{eq:perron}). Then
\begin{align}
-2s&\leq\frac{\partial_xf_s(x,y)}{f_s(x,y)}\leq0,\\
-2s(2s+1)(2s+2)&\leq\frac{\partial^{3}_xf_s(x,y)}{f_s(x,y)}\leq\frac{2s(2s+2)}{(s+2)^2},\\
\left|\frac{\partial_yf_s(x,y)}{f_s(x,y)}\right|&\leq s,\\
\left|\frac{\partial^3_yf_s(x,y)}{f_s(x,y)}\right|&\leq 2s(2s+2)\max\{25\sqrt5/72,(2s+1)/8\}.
\end{align}
\label{thm:falk_deriv_bounds}
\end{theorem}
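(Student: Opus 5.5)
The plan is to prove all four inequalities at once by writing $f_s$ as a positive superposition of simple kernels. The ratio bounds for $f_s$ then reduce to pointwise ratio bounds for a single kernel.

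\textbf{Step 1: structure of the iterates.} Identify $\mathbb{R}^2$ with $\mathbb{C}$, so that each $\phi_e$ is an inversion composed with a translation, i.e. an anti-Möbius map. Every composite $\phi_\omega$, $\omega\in E^*$, is then again a (anti-)Möbius map. Its derivative norm has the form
$$|D\phi_\omega(z)|=\frac{\lambda_\omega}{|z-q_\omega|^{2}},\qquad \lambda_\omega>0,$$
where the pole $q_\omega$ is a reflected image of a point of $X$ under the inverse words. I would check by induction on $|\omega|$ that every pole satisfies $\mathrm{Re}\,q_\omega\le -1$ and has bounded imaginary part. The base case is $\phi_e$ with pole $-e$ and $e_1\ge1$, and the induction step uses $\phi_e(X)\subset X$. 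It follows that
$$L_s^k\mathbf 1(z)=\sum_{|\omega|=k}\lambda_\omega^s|z-q_\omega|^{-2s}.$$

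\textbf{Step 2: integral representation.} By Theorem \ref{thm:CIFS_props} and the standard Perron--Frobenius convergence for this system, suitably normalized iterates $L_s^k\mathbf 1$ converge uniformly, together with their derivatives, to $f_s$. Passing to a weak-$*$ limit of the discrete measures $\sum\lambda_\omega^s\delta_{q_\omega}$ (normalized) gives a positive measure $\mu$ supported on $Q=\{q:\mathrm{Re}\,q\le-1\}$ (bounded) with
$$f_s(x,y)=\int_Q g_q(x,y)\,d\mu(q),\qquad g_q=|z-q|^{-2s}.$$
Alternatively, one can skip the limit entirely. Any function in the cone $\{\int g_q\,d\mu\}$ satisfies the ratio bounds; the cone is $L_s$-invariant, since $g_q\circ\phi_e\cdot|D\phi_e|^s$ is again a multiple of some $g_{q'}$ with $q'\in Q$; and the bounds are closed conditions.

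\textbf{Step 3: reduction to a single kernel.} For any differential operator $\partial$,
$$\frac{\partial f_s}{f_s}=\frac{\int (\partial g_q/g_q)\,g_q\,d\mu}{\int g_q\,d\mu}$$
is a weighted average of the pointwise ratios $\partial g_q/g_q$. It therefore lies between their infimum and supremum over $q\in Q$ and $(x,y)\in X$. Set $a=x-\mathrm{Re}\,q\ge1$, $b=y-\mathrm{Im}\,q\in\mathbb{R}$, and $r^2=a^2+b^2$.

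\textbf{Step 4: first derivatives.} We have $\partial_xg/g=-2sa/r^2\in[-2s,0]$, because $a/r^2\le 1/a\le1$. For the $y$-derivative, $|\partial_yg/g|=2s|b|/r^2\le 2s\cdot\frac{1}{2a}\le s$, using $2a|b|\le a^2+b^2$.

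\textbf{Step 5: third derivatives (main obstacle).} Compute $\partial_x^3 r^{-2s}$ and $\partial_y^3 r^{-2s}$ explicitly. After dividing by $g$, each is a homogeneous expression in $a,b$ of degree $-3$, for example
$$\frac{\partial_x^3 g}{g}=-\frac{2s}{r^{6}}\Big((2s+2)(2s+4)a^3-3(2s+2)ar^2\Big)$$
(up to checking constants). Write $t=a/r\in(0,1]$ and use $r\ge a\ge1$. The problem then becomes maximizing and minimizing one-variable polynomials in $t$, times $r^{-3}\le t^3$ or a similar factor, over $t$ together with the constraint $a\ge1$.

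This elementary but delicate optimization is where I expect the difficulty. It should produce the asymmetric upper bound $2s(2s+2)/(s+2)^2$ from an interior critical point, and the lower bound $-2s(2s+1)(2s+2)$ from the endpoint $b=0$, $a=1$. For $\partial_y^3$, the competing interior critical point (which should give $25\sqrt5/72$) and the boundary value (which should give $(2s+1)/8$) explain the maximum in the statement. I would first try the substitution $b=a\tan\theta$ and locate the critical points in $\theta$ directly.
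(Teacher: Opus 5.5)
The paper does not prove this theorem itself; it quotes it from Falk and Nussbaum. Your Steps 1--4 are the same kernel reduction used there, and the paper reproduces it in its proof of Theorem~\ref{thm:third_bounds}. That reduction has three parts: $|D\phi_\omega|^s$ is a multiple of $((x+\gamma_\omega)^2+(y+\delta_\omega)^2)^{-s}$ with $\gamma_\omega\ge 1$ (your pole is $q_\omega=-\gamma_\omega-i\delta_\omega$); one bounds $\partial^\alpha G/G$ for $G(u,v;s)=(u^2+v^2)^{-s}$ with $u\ge 1$; and one passes to $f_s$ via \eqref{eq:fs_deriv}. Your measure $\mu$ is an equivalent packaging of that limit. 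It needs only uniform convergence of the normalized iterates, since derivatives can then be taken under the integral. Your ``skip the limit'' variant is not really limit-free, though: it still needs that convergence to place $f_s$ in the closed invariant cone.

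Steps 1--4 are correct. Your formula for $\partial_x^3 g/g$ is correct, and so is your forecast for the $\partial_x^3$ bounds. With $b=\rho a$ and $t=\rho^2$,
\[
\frac{\partial_x^3 g}{g}=\frac{2s(2s+2)}{a^3}\cdot\frac{3t-(2s+1)}{(1+t)^3},\qquad \frac{d}{dt}\,\frac{3t-(2s+1)}{(1+t)^3}=\frac{6(s+1-t)}{(1+t)^4}.
\]
So the second factor ranges over $[-(2s+1),(s+2)^{-2}]$, with the minimum at $t=0$ and the maximum at $t=s+1$, and $a\ge 1$ gives both stated bounds.

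The gap is in the $\partial_y^3$ bound: the mechanism you anticipate does not exist. Here
\[
\frac{\partial_y^3 g}{g}=\frac{2s(2s+2)}{a^3}\cdot\frac{\rho\,\bigl(3-(2s+1)\rho^2\bigr)}{(1+\rho^2)^3}.
\]
The second factor vanishes at $\rho=0$ and tends to $0$ as $|\rho|\to\infty$, so there is no boundary value $(2s+1)/8$. Its critical points solve $(2s+1)t^2-(2s+6)t+1=0$ with $t=\rho^2$. The critical values are neither $25\sqrt5/72$ nor $(2s+1)/8$; for $s=1$ they are about $0.652$ and $-0.166$, against a stated bound of about $0.776$. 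So locating critical points in $\theta$ gives a sharper but unwieldy constant, and you would still have to show it lies below the stated maximum.

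The idea that produces the stated form is a crude splitting. For $\rho\ge 0$, $|3\rho-(2s+1)\rho^3|\le\max\{3\rho,(2s+1)\rho^3\}$. Then maximize the two pieces separately:
\[
\max_\rho \frac{3\rho}{(1+\rho^2)^3}=\frac{25\sqrt5}{72}\ \text{ at } \rho=\tfrac{1}{\sqrt5},\qquad \max_\rho\frac{\rho^3}{(1+\rho^2)^3}=\frac18\ \text{ at } \rho=1.
\]
Oddness in $\rho$ covers $\rho<0$, and $a\ge 1$ finishes. With this step inserted, your argument proves the statement.
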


All that remains is to obtain bounds for the mixed third-order partial derivatives, which we provide below.

\begin{theorem}
Let $E\subset\mathbb{N}\times\mathbb{Z}$ be finite. Let $f_s$ be the eigenfunction from the Perron-Frobenius operator (\ref{eq:perron}). Then
\begin{equation}
-\frac{4}{3}s(1+(s+2)(2s+1))\leq\frac{\partial_{xxy}f_s(x,y)}{f_s(x,y)}\leq\frac{1}{2}s^2+s
\label{eq:xxy_deriv}
\end{equation}
and
\begin{equation}
-4s\left(1+\frac{4}{27}(s+2)(2s+1)\right)\leq\frac{\partial_{yyx}f_s(x,y)}{f_s(x,y)}\leq4s^2+8s.
\label{eq:yyx_deriv}
\end{equation}
\label{thm:third_bounds}
\end{theorem}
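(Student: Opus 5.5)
The approach is the one Falk and Nussbaum use for the bounds in Theorem \ref{thm:falk_deriv_bounds}. First, write $f_s$ as a limit of positive combinations of elementary functions whose logarithmic derivatives can be computed explicitly. Second, bound the relevant ratio for each elementary function uniformly. Third, pass the bound to $f_s$. The passage works because a two-sided bound $\alpha\le Dw/w\le\beta$ holding for every term of a positive sum also holds for the sum.

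\textbf{Step 1 (representation).} Identify $\mathbb{R}^2$ with $\mathbb{C}$. Each $\phi_\omega$, $\omega\in E^m$, is a composition of orientation-reversing inversions, so it is an (anti-)M\"obius map and satisfies $\|D\phi_\omega(z)\|=|c_\omega z+d_\omega|^{-2}$ (or the same with $\bar z$). Hence
$$L_s^m\mathbbm{1}(x,y)=\sum_{\omega\in E^m}|c_\omega|^{-2s}\bigl((x+a_\omega)^2+(y+b_\omega)^2\bigr)^{-s},$$
where $-(a_\omega,b_\omega)$ is the pole of $\phi_\omega$. I would then invoke Theorem \ref{thm:CIFS_props} together with the convergence of $\lambda^{-m}L_s^m\mathbbm{1}\to f_s$ established by Falk and Nussbaum. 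This convergence is in $C^3$ on $X$, because the terms are real analytic with uniform bounds on a neighborhood of $X$. It therefore suffices to bound the ratio $\partial_{xxy}w/w$, and likewise $\partial_{yyx}w/w$, for $w=(u^2+v^2)^{-s}$ with $u=x+a$, $v=y+b$. Here $(a,b)$ ranges over the admissible poles. The admissibility constraint comes from $\phi_\omega(X)\subset X$ and is exactly the one that yields $-2s\le\partial_xf_s/f_s\le0$ in Theorem \ref{thm:falk_deriv_bounds}. With $r^2=u^2+v^2$, I will use the constraint in the form
$$(p,q):=(u,v)/r^2\in X,\qquad\text{i.e. } (p-\tfrac12)^2+q^2\le\tfrac14 .$$

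\textbf{Step 2 (explicit derivatives).} Direct differentiation gives $\partial_y w=-2s\,v\,r^{-2s-2}$ and $\partial_{xy}w=4s(s+1)uv\,r^{-2s-4}$. Hence
$$\frac{\partial_{xxy}w}{w}=4s(s+1)\frac{v}{r^4}\Bigl(1-(2s+4)\frac{u^2}{r^2}\Bigr),$$
and by symmetry $\partial_{yyx}w/w$ is the same expression with the roles of $u$ and $v$ swapped. Substituting $p=u/r^2$, $q=v/r^2$ and $1/r^2=p^2+q^2$ turns each ratio into an explicit rational function of $(p,q)$, homogeneous after clearing $r$. For the first ratio this is $4s(s+1)\,q\,\bigl((p^2+q^2)-(2s+4)p^2\bigr)/(p^2+q^2)\cdot(p^2+q^2)^{1/2}$-type terms. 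I would then simplify carefully, expressing everything through $p$, $q$ and $p^2+q^2=p$ on the boundary circle.

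\textbf{Step 3 (optimization and main obstacle).} Maximize and minimize these functions over the closed disk $(p-\frac12)^2+q^2\le\frac14$. Interior critical points come from solving polynomial equations in $(p,q)$. On the boundary, parametrize by $p=\frac12(1+\cos\theta)$, $q=\frac12\sin\theta$, or use $q^2=p-p^2$ to reduce to one variable. The asymmetric shape of the stated bounds suggests that different terms are extremized at different points and then combined, giving a bound rather than exact extrema. The $\frac43$ and $\frac{4}{27}$ constants look like maxima of expressions such as $q\,p$ or $q(1-p)^2$ on the disk (e.g.\ $\max t(1-t)^2=4/27$). So I would split each ratio into a part bounded by $|q|\le\frac12$, $0\le p\le1$ and a part bounded via single-variable calculus such as $\max_{0\le p\le1}p^{1/2}(1-p)^{3/2}$. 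I expect this step to be the main obstacle: the bookkeeping is error-prone, and sharpness is not needed, only validity. The last point to check is that the admissible pole set really satisfies the disk constraint for every word $\omega$, which I would verify by induction on $|\omega|$ using the open set condition.
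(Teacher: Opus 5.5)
Your Steps 1 and 2 are correct and follow the paper's route. The paper writes $|D\phi_\omega|^s=((x+\gamma_\omega)^2+(y+\delta_\omega)^2)^{-s}$ with $\gamma_\omega\ge1$, bounds each elementary ratio, and passes to $f_s$ through the limit of ratios. Your constraint $(u,v)/r^2\in X$ is the same as the paper's $u\ge 1$, because $w\mapsto 1/\bar w$ maps $\{\mathrm{Re}\,w\ge1\}$ onto $X$.

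Your substitution is cleaner than you indicate. Since $u/r^2=p$ and $v/r^2=q$,
\[
\frac{\partial_{xxy}w}{w}=4s(s+1)\,q\,\bigl(q^2-(2s+3)p^2\bigr),\qquad \frac{\partial_{yyx}w}{w}=4s(s+1)\,p\,\bigl(p^2-(2s+3)q^2\bigr),
\]
to be optimized over the disk $q^2\le p(1-p)$. Three of the four bounds then follow in a line each:
\begin{itemize}
\item \textbf{Upper bound in \eqref{eq:yyx_deriv}.} $\partial_{yyx}w/w\le 4s(s+1)p^3\le 4s^2+8s$.
\item \textbf{Lower bound in \eqref{eq:yyx_deriv}.} $\partial_{yyx}w/w\ge -4s(s+1)(2s+3)\,p^2(1-p)\ge-\tfrac{16}{27}s(s+1)(2s+3)$, using $\max p^2(1-p)=\tfrac4{27}$. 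This lies above the stated lower bound.
\item \textbf{Lower bound in \eqref{eq:xxy_deriv}.} For $q\ge0$ you get $\ge -4s(s+1)(2s+3)p^2q\ge -4s(s+1)(2s+3)\tfrac{25\sqrt5}{216}$. For $q<0$ you get $\ge -4s(s+1)|q|^3\ge-\tfrac12 s(s+1)$. Both lie above $-\tfrac43 s(s+1)(2s+3)=-\tfrac43 s(1+(s+2)(2s+1))$.
\end{itemize}

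The gap is the upper bound $\tfrac12 s^2+s$ in \eqref{eq:xxy_deriv}; your Step 3 cannot produce it.

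\begin{itemize}
\item \textbf{Why no per-term argument works.} The first polynomial is odd in $q$, and the disk is symmetric under $q\mapsto -q$. So the per-term supremum is exactly minus the per-term infimum, and no per-term bound can give the asymmetric pair in \eqref{eq:xxy_deriv}.
\item \textbf{Numbers.} At $s=1$ the ratio is $8q(q^2-5p^2)$. At the admissible point $(u,v)=(1.1,-0.3)$ it is about $6.5$; this is, for example, the letter $e=(1,0)$ evaluated at $(x,y)=(0.1,-0.3)\in X$. Its maximum over the disk is about $10$, against the claimed $1.5$.
\item \textbf{What the method gives.} The honest output is the mirror image of the lower bound, $\tfrac43 s(1+(s+2)(2s+1))$.
\item \textbf{The paper's proof does not fill this.} It reaches $\tfrac12 s^2+s$ via $\partial_{uuv}G/G\le(4s^2+8s)v^3/(u^2+v^2)^3$. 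That step discards $-4svu^2-4sv^3-4s(s+2)(2s+1)vu^2$, which are nonpositive only when $v\ge0$.
\item \textbf{The stated bound appears to be false.} Take the infinite real alphabet $\mathbb{N}\times\{0\}$ at $s=1$. Then $f_1(x,y)=y^{-1}\arctan\bigl(y/(1+x)\bigr)$ and $\partial_{xxy}f_1=-8(1+x)y/((1+x)^2+y^2)^3$. This gives $\partial_{xxy}f_1/f_1\approx 2.21>\tfrac32$ at $(0,-\tfrac12)$. That point lies in the square where Section 4 applies these bounds.
\end{itemize}
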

\begin{proof}
Using the notation of Section 5 from \cite{falk2018new}, let $G(u,v;s)=(u^2+v^2)^{-s}$ for $s>0$ and $P_2(u,v;s)=2s(2s+1)u^2-2sv^2$. It holds that (see Remark 5.1)
\begin{equation}
\frac{\partial_{uu}G(u,v;s)}{G(u,v;s)}=\frac{P_2(u,v;s)}{(u^2+v^2)^2}
\label{eq:uu_deriv}
\end{equation}
and
\begin{equation}
\frac{\partial_{vv}G(u,v;s)}{G(u,v;s)}=\frac{P_2(v,u;s)}{(u^2+v^2)^2}
\label{eq:vv_deriv}
\end{equation}
We'll start by bounding $\partial_{uuv}G(u,v;s)/G(u,v;s)$. Note that $u\geq 1$ throughout the following calculations. From equation (\ref{eq:uu_deriv}) we obtain
$$\partial_{uu}G(u,v;s)=(2s(2s+1)u^2-2sv^2)(u^2+v^2)^{-s-2}.$$
Thus
\begin{equation*}
\frac{\partial_{uuv}G(u,v;s)}{G(u,v;s)}=\frac{-4sv(u^2+v^2)-2v(s+2)(2s(2s+1)u^2-2sv^2)}{(u^2+v^2)^3}.
\label{eq:Guuv}
\end{equation*}
It follows that 
$$\frac{\partial_{uuv}G(u,v;s)}{G(u,v;s)}\leq\frac{4s^2v^3+8sv^3}{(u^2+v^2)^3}.$$
Since $u\neq0$, can write $v=\rho u$ for some real number $\rho$, then
$$\frac{4s^2\rho^3u^3+8s\rho^3u^3}{(u^2+\rho^2u^2)^3}=\frac{4s^2+8s}{u^3}\cdot\frac{\rho^3}{(1+\rho^2)^3}.$$
Some calculus shows that $\max\{\rho^3/(1+\rho^2)^3:\rho\in\mathbb{R}\}=1/8$, so we obtain, using the fact that $u\geq1$,
\begin{equation}
\frac{\partial_{uuv}G(u,v;s)}{G(u,v;s)}\leq\frac{4s^2+8s}{u^3}\cdot\frac{\rho^3}{(1+\rho^2)^3}\leq\frac{1}{8}\frac{4s^2+8s}{u^3}\leq\frac{1}{2}s^2+s.
\label{eq:uuv_upper}
\end{equation}
On the other hand, we have
\begin{align*}
\frac{\partial_{uuv}G(u,v;s)}{G(u,v;s)}&\geq\frac{-4sv(u^2+v^2)-2v(s+2)(2s)(2s+1)u^2}{(u^2+v^2)^3}\\
&=-\frac{4sv}{(u^2+v^2)^2}-\frac{4svu^2(s+2)(2s+1)}{(u^2+v^2)^3}.
\end{align*}
As before, we can write $v=\rho u$, and thus
\begin{align*}
\frac{\partial_{uuv}G(u,v;s)}{G(u,v;s)}\geq-4s\cdot\frac{1}{u^3}\cdot\frac{\rho}{(1+\rho^2)^2}-4s(s+2)(2s+1)\cdot\frac{1}{u^3}\cdot\frac{\rho}{(1+\rho^2)^3}.
\end{align*}
Some calculus shows that $\max\{\rho/(1+\rho^2)^2:\rho\in\mathbb{R}\}<1/3$ and $\max\{\rho/(1+\rho^2)^3:\rho\in\mathbb{R}\}<1/3$, so we obtain, using the fact that $u\geq1$,
\begin{equation}
\frac{\partial_{uuv}G(u,v;s)}{G(u,v;s)}\geq-\frac{4}{3}s(1+(s+2)(2s+1)).
\label{eq:uuv_lower}
\end{equation}
We will now pass these bounds to the eigenfunction $f_s$ of the Perron-Frobenius operator (\ref{eq:perron}). By Remark 4.1 in \cite{falk2018new}, for any multi-index $\alpha$ it follows that
\begin{equation}
\frac{\partial^\alpha f_s(x,y)}{f_s(x,y)}=\lim_{n\to\infty}\frac{\partial^\alpha(\sum_{\omega\in E^n}|D\phi_\omega(x,y)|^s)}{\sum_{\omega\in E^n}|D\phi_\omega(x,y)|^s},
\label{eq:fs_deriv}
\end{equation}
where $E^n$ is the set of words with length $n$, as described earlier in this section. Using the same argument as in the proof of Theorem 5.8 in \cite{falk2018new}, we can write
$$|D\phi_\omega(x,y)|^s=((x+\gamma_\omega)^2+(y+\delta_\omega)^2)^{-s},$$
where $\gamma_\omega\geq1$ and $\delta_\omega$ is a real number. Then
$$\frac{\partial_{xxy}(|D\phi_\omega(x,y)|^s)}{|D\phi_\omega(x,y)|^s}=\frac{\partial_{xxy}((x+\gamma_\omega)^2+(y+\delta_\omega)^2)^{-s}}{((x+\gamma_\omega)^2+(y+\delta_\omega)^2)^{-s}}$$
If we write $u=x+\gamma_\omega\geq1$ and $v=y+\delta_\omega$, we obtain
$$\frac{\partial_{xxy}(|D\phi_\omega(x,y)|^s)}{|D\phi_\omega(x,y)|^s}=\frac{\partial_{uuv}(u^2+v^2)^{-s}}{(u^2+v^2)^{-s}}=\frac{\partial_{uuv}G(u,v;s)}{G(u,v;s)}$$
Combining this with the estimates (\ref{eq:uuv_upper}) and (\ref{eq:uuv_lower}) and passing through the limit using (\ref{eq:fs_deriv}), we obtain the bound (\ref{eq:xxy_deriv}).

To derive the bound (\ref{eq:yyx_deriv}), we proceed in an analogous manner. We have
\begin{equation*}
\frac{\partial_{vvu}G(u,v;s)}{G(u,v;s)}=\frac{-4su(u^2+v^2)-2u(s+2)(2s(2s+1)v^2-2su^2)}{(u^2+v^2)^3},
\label{eq:Gvvu}
\end{equation*}
which can be obtained by noticing that $\partial_{vvu}G(u,v;s)$ is the same as $\partial_{uuv}G(u,v;s)$ with the roles of $u$ and $v$ swapped, by virtue of (\ref{eq:vv_deriv}). It follows that 
$$\frac{\partial_{vvu}G(u,v;s)}{G(u,v;s)}\leq\frac{4s^2u^3+8su^3}{(u^2+v^2)^3}.$$
Since $v^2\geq0$ we obtain
\begin{equation}
\frac{\partial_{vvu}G(u,v;s)}{G(u,v;s)}\leq\frac{4s^2+8s}{u^3}\leq 4s^2+8s.
\label{eq:vvu_upper}
\end{equation}
For the lower bound, we have
\begin{align*}
\frac{\partial_{vvu}G(u,v;s)}{G(u,v;s)}&\geq\frac{-4su(u^2+v^2)-2u(s+2)(2s)(2s+1)v^2}{(u^2+v^2)^3}\\
&=-\frac{4su}{(u^2+v^2)^2}-\frac{4suv^2(s+2)(2s+1)}{(u^2+v^2)^3}.
\end{align*}
We bound the first term below by $-4s/u^3$ using the fact that $v^2\geq0$. For the second term, we again write $v=\rho u$, and thus
\begin{align*}
\frac{\partial_{vvu}G(u,v;s)}{G(u,v;s)}\geq-4s\cdot\frac{1}{u^3}-4s(s+2)(2s+1)\cdot\frac{1}{u^3}\cdot\frac{\rho^2}{(1+\rho^2)^3}.
\end{align*}
Some calculus shows that $\max\{\rho^2/(1+\rho^2)^3:\rho\in\mathbb{R}\}=4/27$, so we obtain, using the fact that $u\geq1$,
\begin{equation}
\frac{\partial_{vvu}G(u,v;s)}{G(u,v;s)}\geq-4s\left(1+\frac{4}{27}(s+2)(2s+1)\right).
\label{eq:vvu_lower}
\end{equation}
Arguing as before, we can combine (\ref{eq:vvu_upper}) and (\ref{eq:vvu_lower}) using (\ref{eq:fs_deriv}) to pass these bounds to the eigenfunction $f_s$ and obtain the bound (\ref{eq:yyx_deriv}).
\end{proof}

\subsection{Hidden Positivity}

Previous work in the rigorous estimation of Hausdorff dimensions of conformal fractals has relied upon the following result (see Lemma 3.2 in \cite{falk2018c}).

\begin{lemma}
Let $M$ be an $(n+1)\times(n+1)$ matrix with nonnegative entries and $w$ be a vector with $n+1$ strictly positive components. Then
\begin{align*}
\text{if } (Mw)_j\geq\lambda w_j,&\text{ then } r(M)\geq\lambda,\\
\text{if } (Mw)_j\leq\lambda w_j,&\text{ then } r(M)\leq\lambda,
\end{align*}
where $r(M)$ denotes the spectral radius of $M$.
\label{lem:pos_mat}
\end{lemma}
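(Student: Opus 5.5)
The plan is to conjugate $M$ by the diagonal matrix built from $w$, so that the componentwise hypotheses become statements about row sums. Only nonnegativity of $M$ and Gelfand's formula $r(M)=\lim_{k\to\infty}\|M^k\|^{1/k}$, valid for any matrix norm, are needed; no Perron--Frobenius theory is required.

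\emph{Upper bound.} Let $W=\mathrm{diag}(w_0,\dots,w_n)$, which is invertible because every $w_j>0$. Set $A=W^{-1}MW$, so that $A_{ij}=M_{ij}w_j/w_i\geq 0$. The $i$-th row sum of $A$ is $(Mw)_i/w_i$. If $(Mw)_j\leq\lambda w_j$ for all $j$, every row sum of the nonnegative matrix $A$ is at most $\lambda$, and hence $\|A\|_\infty\leq\lambda$ in the maximum-row-sum norm. Since $A$ is similar to $M$, we get $r(M)=r(A)\leq\|A\|_\infty\leq\lambda$.

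\emph{Lower bound.} If $\lambda\leq 0$ there is nothing to prove, since $r(M)\geq 0$; so assume $\lambda>0$. I would show $M^k w\geq\lambda^k w$ componentwise by induction on $k$. The key point is that a nonnegative matrix preserves componentwise inequalities between vectors, so $M^{k+1}w=M(M^kw)\geq M(\lambda^k w)=\lambda^k Mw\geq\lambda^{k+1}w$. Taking maximum norms then gives
$$\|M^k\|_\infty\,\|w\|_\infty\geq\|M^kw\|_\infty\geq\lambda^k\min_j w_j.$$
Hence $\|M^k\|_\infty^{1/k}\geq\lambda\,(\min_j w_j/\max_j w_j)^{1/k}$. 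Letting $k\to\infty$ and applying Gelfand's formula yields $r(M)\geq\lambda$.

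Neither step is a real obstacle; the only care needed is in the lower bound. There I must use the monotonicity of $M$ on the nonnegative cone, which is exactly where nonnegativity of $M$ enters, and handle the sign of $\lambda$ separately.
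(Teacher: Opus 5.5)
The paper gives no proof of this lemma; it quotes it from Falk and Nussbaum \cite{falk2018c} (their Lemma 3.2), so there is no internal argument to compare against. Your proof is correct and complete as written. The diagonal similarity $A=W^{-1}MW$ converts $Mw\le\lambda w$ into a row-sum bound, so $r(M)=r(A)\le\|A\|_\infty\le\lambda$. For the other direction, the monotone iteration $M^kw\ge\lambda^kw$ together with Gelfand's formula gives $r(M)\ge\lambda$. (A small simplification: since $M^kw\ge\lambda^kw\ge 0$ componentwise, you have $\|M^kw\|_\infty\ge\lambda^k\|w\|_\infty$, so $\|M^k\|_\infty\ge\lambda^k$ and the factor $(\min_jw_j/\max_jw_j)^{1/k}$ is not needed.)

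The standard textbook proof is different. By the weak Perron--Frobenius theorem applied to $M^T$, there is $v\ge 0$, $v\ne 0$, with $v^TM=r(M)v^T$. Hence $r(M)\,v^Tw=v^TMw$, which is $\ge\lambda v^Tw$ or $\le\lambda v^Tw$ according to the hypothesis. Since $v^Tw>0$ because $w$ is strictly positive, both inequalities follow in one line. That route is shorter but presupposes a nonnegative Perron eigenvector. Yours uses only similarity invariance of the spectrum, $r(A)\le\|A\|$, and Gelfand's formula.

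Your version also adapts more naturally to the cone setting of Lemma \ref{lem:cones}. The quantity $\|W^{-1}u\|_\infty=\min\{c\ge 0:-cw\le u\le cw\}$ is exactly the order-unit norm determined by $w$, and your lower-bound induction uses only that $M$ is monotone for the order. So both steps make sense with $\le_K$ in place of the componentwise order, the upper bound requiring $w$ to be interior to the cone. The eigenvector proof, by contrast, would need a cone version of the Perron--Frobenius theorem on the dual cone.
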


The hypothesis that $M$ have nonnegative entries is indispensable. As such, an ideal approximation scheme ought to preserve positivity in the sense that if $f$ is a nonnegative function, its approximation $Qf$ also ought to be nonnegative.

Unfortunately, there are computational drawbacks to demanding preservation of positivity. A consequence of Korovkin's Theorem is that any linear approximation scheme that preserves positivity as described above is limited to be at most second-order accurate \cite{korovkin1957order}. Therefore, in order to achieve higher accuracy, one must either sacrifice positivity or turn to nonlinear methods. Here, we take the former approach.

In \cite{falk2021hidden}, Falk and Nussbaum showed that, by considering an alternative cone of vectors in place of the cone of nonnegative vectors, the conclusions of the Perron-Frobenius theorem can hold under the more general notion of positivity in which an operator maps a cone of vectors into itself. In the case of continuous piecewise linear functions, they defined a cone of functions $K_M$ as follows:

On the unit interval $[0,1]$, for any fixed natural number $N$, let $h=1/N$ and $x_i=ih$ for $i=0,1,\dots,N$. Let $X_N$ denote the finite-dimensional subspace of $C[0,1]$ given by $$X_N=\{f\in C[0,1]:f|_{[x_i,x_{i+1}]}\in\mathbb{P}^1_{[x_i,x_{i+1}]}\text{ for all $i=0,1,\dots,N-1$}\},$$
where $\mathbb{P}^n_{U}$ denotes the space of polynomials over on set $U$ with degree at most $n$. For any real number $M>0$, the cone $K_M$ is defined as
$$K_M=\{f\in X_N:f(x_i)\leq f(x_j)\exp(M|x_i-x_j|)\text{ for all $i,j=0,1,\dots,N$}\}.$$
A straightforward calculation shows that $K_M$ is equivalent to 
$$K_M=\{f\in X_N:|\log f(x_i)-\log f(x_j)|\leq M|x_i-x_j|\text{ for all $i,j=0,1,\dots,N$}\}.$$
$K_M$ induces a partial order $\leq_K$ on $\mathbb{R}^N$, where $u\leq_K v$ if and only if $v-u\in K_M$. The cone for higher-degree polynomial interpolants is defined similarly, and we refer readers to Section 3 of \cite{falk2021hidden} for the details. The authors proved that, under certain assumptions, if $F\in K_M$ and $F$ is not identically zero, then the polynomial interpolant $\mathcal{F}$ of $F$ is nonnegative. For an appropriate linear map $L$, they showed that for some $M'<M$ that
$$L(K_M\setminus\{0\})\subset K_{M'}\setminus\{0\}.$$
With this condition satisfied, they were able to obtain rigorous numerical estimates of the Hausdorff dimensions of various continued fraction systems via the following result that extends the conclusion of Lemma \ref{lem:pos_mat} to cones (see Lemma 4.1 in \cite{falk2021hidden}).

\begin{lemma}
Suppose $L(K_M\setminus\{0\})\subset K_{M'}\setminus\{0\}$ and $v\in K_M\setminus\{0\}$. If there exist positive constants $\alpha$ and $\beta$ such that
$$\alpha v\leq_K Lv\leq_K\beta v,$$
then $\alpha\leq r(L)\leq \beta$, where $r(L)$ is the spectral radius of $L$.
\label{lem:cones}
\end{lemma}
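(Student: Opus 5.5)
The plan is to view $L$ as a linear map on the finite-dimensional space $X_N$ (identified with $\mathbb{R}^{N+1}$ through nodal values) that is monotone for $\leq_K$. I would then compare the growth of $\|L^k\|$ with $\alpha^k$ and $\beta^k$ and conclude with Gelfand's formula $r(L)=\lim_k\|L^k\|^{1/k}$.

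First I would record the structural facts about $K_M$. It is a closed convex cone, since it is defined by finitely many non-strict linear inequalities $f(x_i)-e^{M|x_i-x_j|}f(x_j)\le 0$. It is also pointed, $K_M\cap(-K_M)=\{0\}$. To see this, note that if $f\in K_M$ and some $f(x_i)>0$, then $f(x_i)\le f(x_j)e^{M|x_i-x_j|}$ forces $f(x_j)>0$ for every $j$. So each nonzero element of $K_M$ has all nodal values strictly positive, and $f,-f\in K_M$ forces $f=0$. Since $X_N$ is finite-dimensional, a closed pointed cone is normal: there is $c>0$ with $0\le_K u\le_K w\Rightarrow\|u\|\le c\|w\|$. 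Also, $L(K_M)\subset K_M$ (because $K_{M'}\subset K_M$ for $M'<M$) makes $L$ order preserving: $u\le_K w\Rightarrow Lu\le_K Lw$.

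\emph{Lower bound.} Applying $L$ repeatedly to $\alpha v\le_K Lv$ gives $\alpha^k v\le_K L^k v$, so $L^k v\in\alpha^k v+K_M$. Set $\delta=\operatorname{dist}(v,-K_M)$. This is positive because $-K_M$ is closed and $v\notin -K_M$ by pointedness. For every $w\in K_M$ we have $\|\alpha^k v+w\|=\alpha^k\|v+\alpha^{-k}w\|\ge\alpha^k\delta$. Hence $\|L^k\|\,\|v\|\ge\alpha^k\delta$, and taking $k$-th roots gives $r(L)\ge\alpha$.

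\emph{Upper bound.} For the upper bound I want an order unit, that is, an interior point of $K_M$. The given $v$ may lie on the boundary, so I would replace it by $w:=Lv$. By hypothesis $w\in K_{M'}\setminus\{0\}$, and applying the monotone $L$ to $Lv\le_K\beta v$ gives $Lw\le_K\beta w$. I then claim that $K_{M'}\setminus\{0\}\subset\operatorname{int}K_M$. Indeed, $w$ has all nodal values positive and satisfies $w(x_i)\le w(x_j)e^{M'|x_i-x_j|}<w(x_j)e^{M|x_i-x_j|}$ for $i\ne j$. These finitely many strict inequalities survive small perturbations, so a neighborhood of $w$ lies in $K_M$. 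Consequently there is $C>0$ with $-C\|u\|w\le_K u\le_K C\|u\|w$ for every $u\in X_N$. Iterating gives $-C\|u\|\beta^k w\le_K L^k u\le_K C\|u\|\beta^k w$. Writing $L^k u=(L^ku+C\|u\|\beta^kw)-C\|u\|\beta^kw$ and using normality on the first term, I get $\|L^k u\|\le C'\beta^k\|u\|$. Hence $\|L^k\|\le C'\beta^k$ and $r(L)\le\beta$.

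The main obstacle is this upper-bound step: showing that $v$ (or a substitute for it) is an order unit. That is exactly where the strict gap $M'<M$ is used, via the interiority of $K_{M'}\setminus\{0\}$ in $K_M$. The rest is standard cone bookkeeping in finite dimensions.
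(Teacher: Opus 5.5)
The paper gives no proof of this lemma. It quotes it as Lemma 4.1 of Falk and Nussbaum and uses it as a black box, so your argument is a self-contained replacement rather than a variant of something in the text. Your argument is correct.

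The lower bound uses only $L(K_M)\subset K_M$, closedness and pointedness, via $\alpha^k v\le_K L^k v$ and $\operatorname{dist}(v,-K_M)>0$. For the upper bound, replacing $v$ by $w=Lv$ is exactly the right move. Since $M'<M$, the set $K_{M'}\setminus\{0\}$ lies inside $\operatorname{int}K_M$, so $w$ is an order unit, and normality plus Gelfand's formula give $\|L^k\|\le C'\beta^k$.

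You are right that the strict gap is essential; note that it sits in the surrounding text, not in the displayed statement. With two nodes at distance $d$, $K_M$ is the planar cone spanned by $p=(e^{Md},1)$ and $q=(1,e^{Md})$. The map with $Lp=p$, $Lq=2q$ sends $K_M\setminus\{0\}$ into itself and satisfies $Lp\le_K p$, yet $r(L)=2$.

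A shorter standard route applies the finite-dimensional Krein--Rutman theorem to $L^{*}$ on the dual cone, giving $\psi\neq0$ with $L^{*}\psi=r(L)\psi$. Strong positivity makes $\psi>0$ on $K_M\setminus\{0\}$. Applying $\psi$ to $\alpha v\le_K Lv\le_K\beta v$ then gives $\alpha\psi(v)\le r(L)\psi(v)\le\beta\psi(v)$ immediately. Your version trades that theorem for elementary normality and order-unit estimates.

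One line should be patched. You prove that a nonzero $f\in K_M$ with \emph{some} positive nodal value is positive at every node, but then assert positivity for every nonzero element. That also requires excluding nonzero $f\in K_M$ with all values $\le 0$. If $f(x_j)<0$, then $f(x_i)\le e^{M|x_i-x_j|}f(x_j)<0$ for all $i$. For $i\neq j$ the two inequalities combine to $|f(x_i)|\ge e^{2M|x_i-x_j|}|f(x_i)|$, which is impossible. You rely on this fact twice: for pointedness (hence $\delta>0$ and normality), and for the positivity of $w=Lv$ in the interiority step. It should be stated.
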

 
\section{Hidden Positivity for B-Splines}
In this section, we prove hidden positivity results for B-spline quasi-interpolants similar to those for polynomial interpolants by Falk and Nussbaum \cite{falk2021hidden}. These results are necessary for us to apply Lemma \ref{lem:cones} to obtain rigorous bounds for the Hausdorff dimensions of various continued fraction limit sets.

We'll comment here that we have separated the proofs for the one-dimensional case from those of the higher-dimensional case, despite the fact that the proofs for the higher-dimensional case also hold for the one-dimensional case and proceed nearly identically to the arguments presented in the one-dimensional case. We do this intentionally for two reasons. First, the one-dimensional case is the simplest, and we expect it will aid the reader in understanding the proof of the higher-dimensional case. Second, we hope this presentation emphasizes how results using B-splines can generalize to higher dimensions without much alteration to the one-dimensional case.

\subsection{One-Dimensional Case}

\begin{theorem}
Let $Qf(x)=\sum_{j=1}^J(Q_jf)b_j(x)$ be the $n$-th degree B-spline quasi-interpolant of a positive function $f$ corresponding to the uniform knot sequence $\{\xi_k\}$ with grid size $h$. Let $\overline{\xi}_j=\xi_j+h/2$ denote the midpoint of the $j$-th knot interval. For $Q_jf=\sum_{v=0}^nw_vf(\overline\xi_{j+v})$, let $W^+=\{v:w_v>0\}$ and let $W^-=\{v:w_v<0\}$. Let $K_M=\{f:\mathbb{R}\to\mathbb{R}:f(\overline\xi_i)\leq f(\overline\xi_j)\exp(M|\overline\xi_i-\overline\xi_j|)\text{ for all $i,j$}\}$. Let 
$$E(h,n)=\begin{cases}
\exp(Mhn)(1-(\sum_{v\in W^+}w_v)^{-1})&\text{if $n$ is even}\\
\exp(Mh(n+1))(1-(\sum_{v\in W^+}w_v)^{-1})&\text{if $n$ is odd}
\end{cases}.$$ If $f\in K_M$ and $E(h,n)<1$, then $Qf(x)>0$.
%Define the quantity $E(h,n)=\frac{\exp(Mh\fracn)}{w_{\fracn}}\sum_{v\neq{\fracn}}|w_v|$ for even $n$ and $E(h,n)=\frac{\exp(Mh\floorn)}{w_{\floorn}}\sum_{v<{\floorn}}|w_v|$ for odd $n$. Let $K_M=\{f:\mathbb{R}^n\to\mathbb{R}:f(\overline\xi_i)\leq f(\overline\xi_j)\exp(M|\xi_i-\xi_j|)\text{ for all $i,j$}\}$. If $f\in K_M$ and $E(h,n)<1$, then $Qf(x)>0$.
\label{thm:1dhiddenpos}
\end{theorem}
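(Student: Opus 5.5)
Since the B-splines are nonnegative and form a partition of unity on the parameter interval, with at least one $b_j(x)>0$ at each $x$, it suffices to show $Q_jf>0$ for every relevant index $j$. Then $Qf(x)=\sum_j (Q_jf)b_j(x)$ is a convex-type combination of positive numbers with not all weights zero, so it is positive. The whole argument therefore reduces to a single coefficient $Q_jf=\sum_{v=0}^n w_v f(\overline\xi_{j+v})$.

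For a fixed $j$, split the sum as
$$Q_jf=\sum_{v\in W^+}w_vf(\overline\xi_{j+v})-\sum_{v\in W^-}|w_v|f(\overline\xi_{j+v}).$$
Let $m=\min_{v\in W^+}f(\overline\xi_{j+v})$, attained at some $v_0\in W^+$. The positive part is at least $S^+ m$, where $S^+=\sum_{v\in W^+}w_v$. For $v\in W^-$, the cone condition gives
$$f(\overline\xi_{j+v})\leq m\exp(M|\overline\xi_{j+v}-\overline\xi_{j+v_0}|)=m\exp(Mh|v-v_0|).$$
Using $\sum_v w_v=1$ (property (i)), we have $\sum_{v\in W^-}|w_v|=S^+-1$. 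Hence
$$Q_jf\geq m\big(S^+-(S^+-1)\exp(Mh\,\delta)\big)=mS^+\big(1-\exp(Mh\delta)(1-(S^+)^{-1})\big),$$
where $\delta$ bounds $|v-v_0|$ over $v\in W^-$. This is positive exactly when the expression $E$ with $\exp(Mh\delta)$ is less than $1$. Note that $f>0$ gives $m>0$. One could avoid the minimum and instead compare every negative node to a fixed positive node, but using the minimum is the cleanest route.

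The remaining and main step is to justify $\delta\le n$ for even $n$ and $\delta\le n+1$ for odd $n$. Trivially $|v-v_0|\le n$ since $v,v_0\in\{0,\dots,n\}$, which already gives the even-case exponent. The odd-case exponent $n+1$ is weaker than this, so the trivial bound also suffices there, since $\exp(Mhn)\le\exp(Mh(n+1))$ and a smaller $E$ is what we need. I would still check whether the author's intended pairing uses a distance measured between $\overline\xi$ and knot endpoints, which would explain the odd case; for the proof, though, the bound $|\overline\xi_{j+v}-\overline\xi_{j+v_0}|\le nh$ settles both cases. I would also note the degenerate case $W^-=\emptyset$ (for example $n=1$), where $Q_jf\ge S^+m>0$ directly.

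Finally, I would remark that the symmetry properties (ii) and (iii) of the weights are not needed for positivity. They would only matter for sharper choices of $\delta$ obtained by selecting $v_0$ near the middle node $\lfloor n/2\rfloor$.
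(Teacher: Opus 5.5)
Your proof is correct. It differs from the paper's in one substantive choice: the reference node.

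The paper fixes the central node $\overline\xi_{j+\lfloor n/2\rfloor}$ and uses the cone condition on both sides. It bounds negative-weight values from above by $f(\overline\xi_{j+\lfloor n/2\rfloor})e^{Mh\rho}$ and positive-weight values from below by $f(\overline\xi_{j+\lfloor n/2\rfloor})e^{-Mh\rho}$, where $\rho$ is the largest index distance from that node. The ratio of the two parts therefore picks up $e^{2Mh\rho}$. For even $n$, $\rho=n/2$. For odd $n$, the node $\lfloor n/2\rfloor=(n-1)/2$ sits half a step left of the stencil's center, so $\rho=(n+1)/2$. That, rather than any distance to knot endpoints, is the source of the $n+1$ you were unsure about.

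You instead compare each negative-weight node one-sidedly to the minimizer $v_0$ over $W^+$, paying a single factor $e^{Mh|v-v_0|}$. This reproduces the paper's condition for even $n$. For odd $n$ it proves the result under the weaker hypothesis $e^{Mhn}\bigl(1-(\sum_{v\in W^+}w_v)^{-1}\bigr)<1$.

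Your bound can be sharpened further. Although $v_0$ depends on $f$ and $j$, you may take $\delta=\max\{|u-v|:u\in W^+,\,v\in W^-\}$, which depends only on the sign pattern of the weights. This gives $\delta=1,2,3$ for $n=2,3,4$, and for $n=2$ it doubles the admissible $h$ in (\ref{eq:M_cond1D}). Your closing remark about ``selecting $v_0$ near the middle node'' is therefore slightly off: $v_0$ is not yours to choose, and the gain comes from where the negative weights sit.

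The paper's fixed, $f$-independent reference gives a symmetric argument that is repeated verbatim in Theorem \ref{thm:2dhiddenpos}. Your one-sided comparison transfers just as easily, using $|\vec\xi_{j+u}-\vec\xi_{j+v}|\le hn\sqrt d$.

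Finally, you note that one only needs some $b_j(x)>0$, which the partition of unity guarantees on the parameter interval. This is more careful than the paper's claim that the positive-part terms are positive ``for all $j$''.
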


\begin{proof}
First, we write,
\begin{align*}
    Qf(x)&=\sum_{j=1}^J\sum_{v=0}^nw_vf(\overline\xi_{j+v})b_j(x)\\
    &=\sum_{j=1}^J\left(\sum_{v\in W^+}w_vf(\overline\xi_{j+v})b_j(x)+\sum_{v\in W^-}w_vf(\overline\xi_{j+v})b_j(x)\right)\\
    &=\sum_{j=1}^J\left[\sum_{v\in W^+}w_vf(\overline\xi_{j+v})b_j(x)\left(1+\frac{\sum_{v\in W^-}w_vf(\overline\xi_{j+v})}{\sum_{v\in W^+}w_vf(\overline\xi_{j+v})}\right)\right]
\end{align*}

Since $\sum_{v\in W^+}w_vf(\overline\xi_{j+v})b_j(x)>0$ for all $j$, it suffices to prove that $1+\frac{\sum_{v\in W^-}w_vf(\overline\xi_{j+v})}{\sum_{v\in W^+}w_vf(\overline\xi_{j+v})}>0$ for all $j$. In particular, this will follow once we show $\left|\frac{\sum_{v\in W^-}w_vf(\overline\xi_{j+v})}{\sum_{v\in W^+}w_vf(\overline\xi_{j+v})}\right|<1$.

We now break into cases of even and odd $n$. If $n$ is even, then $\overline\xi_{j+\fracn}$ satisfies $|\overline\xi_{j+v}-\overline\xi_{j+\fracn}|\leq h\fracn$ for all $v=0,\dots,n$.

Note that since $f\in K_M$, it follows that
$$f(\overline\xi_{j+\fracn})\exp(-M|\overline\xi_{j+\fracn}-\overline\xi_{j+v}|)\leq f(\overline\xi_{j+v})\leq f(\overline\xi_{j+\fracn})\exp(M|\overline\xi_{j+\fracn}-\overline\xi_{j+v}|)$$
$$\implies f(\overline\xi_{j+\fracn})\exp\left(-Mh\fracn\right)\leq f(\overline\xi_{j+v})\leq f(\overline\xi_{j+\fracn})\exp\left(Mh\fracn\right)$$
for each $v$. Therefore,
$$\left|\frac{\sum_{v\in W^-}w_vf(\overline\xi_{j+v})}{\sum_{v\in W^+}w_vf(\overline\xi_{j+v})}\right|\leq\left|\frac{\sum_{v\in W^-}w_vf(\overline\xi_{j+\fracn})\exp(Mh\fracn)}{\sum_{v\in W^+}w_vf(\overline\xi_{j+\fracn})\exp(-Mh\fracn)}\right|=\exp(Mhn)\left|\frac{\sum_{v\in W^-}w_v}{\sum_{v\in W^+}w_v}\right|.$$

Since $\sum_{v=0}^nw_v=\sum_{v\in W^+}w_v+\sum_{v\in W^-}w_v=1$, which implies $\sum_{v\in W^+}w_v>1$, we have
$$\left|\frac{\sum_{v\in W^-}w_v}{\sum_{v\in W^+}w_v}\right|=\left|\frac{1-\sum_{v\in W^+}w_v}{\sum_{v\in W^+}w_v}\right|=1-\frac{1}{\sum_{v\in W^+}w_v}.$$

Therefore, when $E(h,n)=\exp(Mhn)(1-(\sum_{v\in W^+}w_v)^{-1})<1$, we have $\left|\frac{\sum_{v\in W^-}w_vf(\overline\xi_{j+v})}{\sum_{v\in W^+}w_vf(\overline\xi_{j+v})}\right|<1$, and we are done with the even case.

The odd case proceeds similarly, except that $\overline\xi_{j+\lfloor\fracn\rfloor}$ satisfies $|\overline\xi_{j+v}-\overline\xi_{j+\lfloor\fracn\rfloor}|\leq h\frac{n+1}{2}$ for all $v=0,\dots,n$, since $\overline\xi_{j+v}$ is $n+1$ knot intervals away from $\overline\xi_{j+\lfloor\fracn\rfloor}$. We then have
$$f(\overline\xi_{j+\lfloor\fracn\rfloor})\exp\left(-Mh\frac{n+1}{2}\right)\leq f(\overline\xi_{j+v})\leq f(\overline\xi_{j+\lfloor\fracn\rfloor})\exp\left(Mh\frac{n+1}{2}\right)$$
for each $v$, so 
$$\left|\frac{\sum_{v\in W^-}w_vf(\overline\xi_{j+v})}{\sum_{v\in W^+}w_vf(\overline\xi_{j+v})}\right|\leq\left|\frac{\sum_{v\in W^-}w_vf(\overline\xi_{j+\lfloor\fracn\rfloor})\exp(Mh\frac{n+1}{2})}{\sum_{v\in W^+}w_vf(\overline\xi_{j+\lfloor\fracn\rfloor})\exp(-Mh\frac{n+1}{2})}\right|=\exp(Mh(n+1))\left|\frac{\sum_{v\in W^-}w_v}{\sum_{v\in W^+}w_v}\right|.$$
From here, the calculation proceeds the same as in the even case, and we are done.
\end{proof}

With this established, we now prove that the interpolant $Qf$ also belongs to some cone $K_{M'}$.
\begin{theorem}
Let $f$ be an $(n+1)$-times continuously differentiable function. Suppose there exist constants $A,B,D>0$ such that $A\leq f(x)\leq B$ and $|f'(x)|\leq D|f(x)|$ for all $x$. Let $Qf$ be the quasi-interpolant of $f$ using degree $n$ B-splines on a knot sequence with uniform mesh size $h$. Then for all $x$ and $y$,
$$|\log Qf(x)-\log Qf(y)|\leq M'(n,h)|x-y|,$$
where
$$M'(n,h)=\frac{DB+C_1h^n}{A-C_2h^{n+1}},\quad C_1=\frac{2(n+1)^{n-1}\|Q\|}{(n-1)!}\|f^{(n+1)}\|_\infty,\quad C_2=\frac{(n+1)^{n}\|Q\|}{n!}\|f^{(n+1)}\|_\infty$$
\label{thm:1D_cone}
\end{theorem}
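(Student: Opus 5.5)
The plan is to use the mean value theorem on $\log Qf$. For $x<y$ we have
$$|\log Qf(x)-\log Qf(y)|\le \sup_{t\in[x,y]}\frac{|(Qf)'(t)|}{|Qf(t)|}\,|x-y|,$$
so it suffices to bound $|(Qf)'(t)|$ above by $DB+C_1h^n$ and $Qf(t)$ below by $A-C_2h^{n+1}$, uniformly in $t$. The constants $C_1,C_2$ are exactly the explicit uniform-mesh constants derived after Theorem \ref{thm:quasi_error}, which settles how to obtain them.

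For the denominator, apply the first estimate following Theorem \ref{thm:quasi_error}:
$$|f(t)-Qf(t)|\le \frac{(n+1)^n\|Q\|}{n!}\|f^{(n+1)}\|_\infty h^{n+1}=C_2h^{n+1}.$$
Hence $Qf(t)\ge f(t)-C_2h^{n+1}\ge A-C_2h^{n+1}$. This bound is only meaningful when $A-C_2h^{n+1}>0$, which is implicitly assumed whenever $M'(n,h)$ is finite and positive. I would state that assumption explicitly. Under it, $Qf$ is positive, so $\log Qf$ is well defined.

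For the numerator, use the derivative estimate after Theorem \ref{thm:quasi_error} with $j=1$:
$$|f'(t)-(Qf)'(t)|\le \frac{2(n+1)^{n-1}\|Q\|}{(n-1)!}\|f^{(n+1)}\|_\infty h^n=C_1h^n.$$
Combined with $|f'(t)|\le D|f(t)|\le DB$, this gives $|(Qf)'(t)|\le DB+C_1h^n$. Dividing the two bounds yields $M'(n,h)$.

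The main obstacle is regularity. $Qf$ is a spline that is only $C^{n-1}$ across knots, and for $n=1$ it is merely piecewise linear. So the mean value theorem should be applied to the absolutely continuous function $\log Qf$: write $\log Qf(y)-\log Qf(x)=\int_x^y (Qf)'/Qf$, using one-sided derivatives at knots, and note that the estimate of Theorem \ref{thm:quasi_error} holds on each knot interval. A second point to handle is that all points must lie in the parameter interval $D^n_\xi$, where Theorem \ref{thm:quasi_error} applies. I would read ``for all $x$ and $y$'' as $x,y$ in the domain covered by $D^n_\xi$, which is convex, so the segment $[x,y]$ stays inside it.
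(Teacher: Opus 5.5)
Your proposal is correct and follows the paper's proof essentially step for step. You write $\log Qf(x)-\log Qf(y)$ as the integral of $(Qf)'/Qf$ over $[x,y]$, bound $Qf$ below by $A-C_2h^{n+1}$ and $|(Qf)'|$ above by $DB+C_1h^n$ using the uniform-mesh consequences of Theorem~\ref{thm:quasi_error}, and divide. Your added remarks make explicit hypotheses the paper leaves implicit without changing the argument: that $A-C_2h^{n+1}>0$ is needed for $\log Qf$ to be defined, that $(Qf)'$ exists only piecewise at the knots, and that $x,y$ must lie in the parameter interval.
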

\begin{proof}
We compute
$$|\log Qf(x)-\log Qf(y)|=\left|\int_y^x\frac{ Qf'(t)}{ Qf(t)}\,dt\right|\leq|x-y|\cdot\max_{t\in[x,y]}\left|\frac{ Qf'(t)}{ Qf(t)}\right|$$
Recall from Theorem \ref{thm:quasi_error} that the inequalities
$$| Qf(t)-f(t)|\leq C_2h^{n+1}\quad\text{and}\quad| Qf'(t)-f'(t)|\leq C_1h^n$$
hold, with $C_1$ and $C_2$ defined as in the theorem statement. Thus, by an application of the triangle inequality, we obtain
$$\left|\frac{ Qf'(t)}{ Qf(t)}\right|\leq\frac{|f'(t)|+C_1h^n}{|f(t)|-C_2h^{n+1}}$$
In the denominator, we bound $A\leq |f(t)|$, and in the numerator, we bound $|f'(t)|\leq D|f(t)|\leq DB$. Thus
$$|\log Qf(x)-\log Qf(y)|\leq|x-y|\cdot\max_{t\in[x,y]}\left|\frac{ Qf'(t)}{ Qf(t)}\right|\leq |x-y|\frac{DB+C_1h^n}{A-C_2h^{n+1}}.$$
So with $M'(n,h)$ defined as in the theorem statement, we obtain 
$$|\log Qf(x)-\log Qf(y)|\leq M'(n,h)|x-y|,$$
as desired.
\end{proof}

We remark here that the size of $M'(n,h)$ is controlled by the constants $A$, $B$, and $D$ when $h$ is sufficiently small. For any parameters $0<\alpha,\beta<1$, $h$ can be taken small enough so that $C_1h^n<\alpha DB$ and $C_2h^{n+1}<\beta A$, then
$$M'(n,h)\leq \frac{DB+\alpha DB}{A-\beta A}=\frac{1+\alpha}{1-\beta}\cdot\frac{DB}{A}$$
Thus, by choosing $h$ sufficiently small, it follows that $M'(n,h)$ can be made arbitrarily close to $DB/A$.

\subsection{Higher-Dimensional Case}

The higher-dimensional analysis proceeds similarly. For simplicity, we equip each axis of $\mathbb{R}^d$ with a uniform knot sequence of grid size $h$, such that there are $J$ splines defined on each axis. The $n$-th degree quasi-interpolant in $\mathbb{R}^d$ takes the following form:
$$ Qf(\vec x)=\sum_{j\in \mathcal{J}}\left(\sum_{v\in W^d}W_vf(\vec\xi_{j+v})\right)b_j(\vec x),$$
where $\mathcal{J}=\{(j_1,j_2,\dots,j_d):1\leq j_1,j_2,\dots,j_d\leq J\}$, $W=\{0,1,\dots,n\}$, $W^d=W\times W\times \dots \times W$ ($d$ times), $W_v=w_{v_1}\cdot w_{v_2}\cdot\dots\cdot w_{v_d}$ is the product of the weights for the 1D quasi-interpolant, $\vec\xi_j=(\overline\xi_{j_1},\overline\xi_{j_2},\dots,\overline\xi_{j_d})$ is the midpoint of the $d$-dimensional hyperrectangle in the $d$-dimensional knot sequence, and 
$$b_j(\vec{x})=b_{(j_1,j_2,\dots,j_d)}(x_1,x_2,\dots,x_d)=\prod_{i=1}^db_{j_i}(x_i).$$ Similar to the one-dimensional case, we will let $W^+=\{v=(v_1,v_2,\dots,v_d):W_v=w_{v_1}w_{v_2}\cdots w_{v_d}>0\}$ and $W^-=\{v=(v_1,v_2,\dots,v_d):W_v=w_{v_1}w_{v_2}\cdots w_{v_d}<0\}$.

The following two theorems are the higher-dimensional analogues of Theorems \ref{thm:1dhiddenpos} and \ref{thm:1D_cone}. We remark that their proofs follow nearly identical reasoning to the previous theorems.
%For our analysis, we will need to refer to the weights and midpoint associated with $v=([n/2],[n/2],\dots,[n/2])$, where $[n/2]$ is the greatest integer not exceeding $n/2$. We will abuse notation and denote the weight and midpoint by $w_{n/2}$ and $\xi_{j+n/2}$, respectively. 

\begin{theorem}
Let $ Qf(\vec{x})=\sum_{j\in \mathcal{J}}\left(\sum_{v\in W^d}W_vf(\vec\xi_{j+v})\right)b_j(\vec x)$ be quasi-interpolant of a positive function $f$ as described above. Let 
$$E(h,n)=\begin{cases}
\exp(Mh\sqrt{d}n)(1-(\sum_{v\in W^+}W_v)^{-1})&\text{if $n$ is even}\\
\exp(Mh\sqrt{d}(n+1))(1-(\sum_{v\in W^+}W_v)^{-1})&\text{if $n$ is odd}
\end{cases}.$$ If $f\in K_M$ and $E(h,n)<1$, then $ Qf(\vec{x})>0$.
%Define the quantity $E(h,n)=\frac{\exp(Mh\fracn)}{w_{\fracn}}\sum_{v\neq{\fracn}}|w_v|$ for even $n$ and $E(h,n)=\frac{\exp(Mh\floorn)}{w_{\floorn}}\sum_{v<{\floorn}}|w_v|$ for odd $n$. Let $K_M=\{f:\mathbb{R}^n\to\mathbb{R}:f(\overline\xi_i)\leq f(\overline\xi_j)\exp(M|\xi_i-\xi_j|)\text{ for all $i,j$}\}$. If $f\in K_M$ and $E(h,n)<1$, then $ Qf(x)>0$.
\label{thm:2dhiddenpos}
\end{theorem}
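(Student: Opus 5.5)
We will mirror the proof of Theorem \ref{thm:1dhiddenpos} essentially verbatim, with the one-dimensional midpoint index replaced by a multi-index. First we split each coefficient of $Qf$ into positive and negative weights and factor out the positive part:
$$Qf(\vec x)=\sum_{j\in\mathcal J}\Big(\sum_{v\in W^+}W_vf(\vec\xi_{j+v})\Big)\Big(1+\frac{\sum_{v\in W^-}W_vf(\vec\xi_{j+v})}{\sum_{v\in W^+}W_vf(\vec\xi_{j+v})}\Big)b_j(\vec x).$$
The B-splines $b_j$ are nonnegative and, at every point of the parameter rectangle, at least one is positive. Since $f>0$, the positive part is strictly positive. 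So it suffices to show, for every $j$, that the ratio of the negative sum to the positive sum has absolute value less than $1$. Here $K_M$ is understood as the natural analogue, $f(\vec\xi_i)\le f(\vec\xi_k)\exp(M|\vec\xi_i-\vec\xi_k|)$ in the Euclidean norm.

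Next we choose a central node. Let $c=(\lfloor n/2\rfloor,\dots,\lfloor n/2\rfloor)$ and use $\vec\xi_{j+c}$ as the reference point. In each coordinate, $|\overline\xi_{j_i+v_i}-\overline\xi_{j_i+c_i}|$ is at most $hn/2$ when $n$ is even and at most $h(n+1)/2$ when $n$ is odd, exactly as in the 1D proof. Taking the Euclidean norm over $d$ coordinates multiplies this by $\sqrt d$. So for all $v\in W^d$ we get $|\vec\xi_{j+v}-\vec\xi_{j+c}|\le \sqrt d\,hn/2$ (even case) or $\le\sqrt d\,h(n+1)/2$ (odd case). The cone condition then gives
$$f(\vec\xi_{j+c})e^{-M\sqrt d hn/2}\le f(\vec\xi_{j+v})\le f(\vec\xi_{j+c})e^{M\sqrt d hn/2}$$
in the even case, and the analogous bound with $n+1$ in the odd case. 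Bounding the numerator above and the denominator below, the factor $f(\vec\xi_{j+c})$ cancels, and the ratio is at most $\exp(Mh\sqrt d\,n)\,|\sum_{W^-}W_v|/\sum_{W^+}W_v$, with $n+1$ in place of $n$ when $n$ is odd.

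Finally we need $\sum_{v\in W^d}W_v=1$. This follows from the product structure, $\sum_{v}W_v=\prod_{i=1}^d\sum_{v_i}w_{v_i}=1$, using property (i) of the 1D weights. Hence $\sum_{W^+}W_v\ge 1$, and
$$\Big|\sum_{W^-}W_v\Big|\Big/\sum_{W^+}W_v=1-\Big(\sum_{W^+}W_v\Big)^{-1}.$$
The bound therefore equals $E(h,n)$, and the hypothesis $E(h,n)<1$ finishes the proof.

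The only step requiring care is the geometric one: checking the maximal Euclidean distance from the central midpoint to the other midpoints in the $(n+1)^d$ stencil, which yields the $\sqrt d$ factor. The rest transfers directly from the one-dimensional case.
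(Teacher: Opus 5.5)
Your proposal is correct and follows the paper's proof essentially step for step: you factor out the positive-weight sum, compare every stencil value to the central node $\vec\xi_{j+c}$ via the cone condition with the $\sqrt{d}$ distance bound, and use $\sum_v W_v=1$ to identify the resulting ratio with $1-(\sum_{v\in W^+}W_v)^{-1}$. Your additions fill in details the paper omits or glosses over: the product-structure justification of $\sum_v W_v=1$, the explicit odd case, and the observation that individual $b_j(\vec x)$ may vanish but not all at once.
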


\begin{proof}
First, we write,
\begin{align*}
     Qf(\vec{x})&=\sum_{j\in \mathcal{J}}\left(\sum_{v\in W^d}W_vf(\vec{\xi}_{j+v})\right)b_j(\vec{x})\\
    &=\sum_{j\in\mathcal{J}}\left(\sum_{v\in W^+}W_vf(\vec\xi_{j+v})b_j(\vec x)+\sum_{v\in W^-}W_vf(\vec\xi_{j+v})b_j(\vec{x})\right)\\
    &=\sum_{j\in\mathcal{J}}\left[\sum_{v\in W^+}W_vf(\vec\xi_{j+v})b_j(\vec x)\left(1+\frac{\sum_{v\in W^-}W_vf(\vec\xi_{j+v})}{\sum_{v\in W^+}W_vf(\vec\xi_{j+v})}\right)\right].
\end{align*}

Since $\sum_{v\in W^+}w_vf(\vec\xi_{j+v})b_j(\vec x)>0$ for all $j$, it suffices to prove that $1+\frac{\sum_{v\in W^-}W_vf(\vec\xi_{j+v})}{\sum_{v\in W^+}W_vf(\vec\xi_{j+v})}>0$ for all $j$. In particular, this will follow once we show $\left|\frac{\sum_{v\in W^-}W_vf(\vec\xi_{j+v})}{\sum_{v\in W^+}W_vf(\vec\xi_{j+v})}\right|<1$.

%We now break into cases of even and odd $n$. If $n$ is even, then $\overline\xi_{j+\fracn}$ satisfies $|\overline\xi_{j+v}-\overline\xi_{j+\fracn}|\leq h\fracn$ for all $v=0,\dots,n$.
We will assume $n$ is even for the following, and we will omit the proof of the odd case for brevity since it follows so similarly to the one-dimensional case. 

Here we will abuse notation and write $j+\fracn$ to mean $(j_1+\fracn,j_2+\fracn,\dots,j_d+\fracn)$. Note that since $f\in K_M$, it follows that
$$f(\vec\xi_{j+\fracn})\exp(-M|\vec\xi_{j+\fracn}-\vec\xi_{j+v}|)\leq f(\vec\xi_{j+v})\leq f(\vec\xi_{j+\fracn})\exp(M|\vec\xi_{j+\fracn}-\vec\xi_{j+v}|)$$
$$\implies f(\vec\xi_{j+\fracn})\exp\left(-Mh\fracn\sqrt{d}\right)\leq f(\vec\xi_{j+v})\leq f(\vec\xi_{j+\fracn})\exp\left(Mh\fracn\sqrt{d}\right)$$
for each $v$. Therefore,
$$\left|\frac{\sum_{v\in W^-}W_vf(\vec\xi_{j+v})}{\sum_{v\in W^+}W_vf(\vec\xi_{j+v})}\right|\leq\left|\frac{\sum_{v\in W^-}W_vf(\vec\xi_{j+\fracn})\exp(Mh\fracn\sqrt{d})}{\sum_{v\in W^+}W_vf(\vec\xi_{j+\fracn})\exp(-Mh\fracn\sqrt{d})}\right|=\exp(Mhn\sqrt{d})\left|\frac{\sum_{v\in W^-}W_v}{\sum_{v\in W^+}W_v}\right|$$

Since $\sum_{v\in W}W_v=\sum_{v\in W^+}W_v+\sum_{v\in W^-}W_v=1$, which implies $\sum_{v\in W^+}W_v>1$, we have
$$\left|\frac{\sum_{v\in W^-}W_v}{\sum_{v\in W^+}W_v}\right|=\left|\frac{1-\sum_{v\in W^+}W_v}{\sum_{v\in W^+}W_v}\right|=1-\frac{1}{\sum_{v\in W^+}W_v}$$

Therefore, when $E(h,n)=\exp(Mhn\sqrt{d})(1-(\sum_{v\in W^+}W_v)^{-1})<1$, we have $\left|\frac{\sum_{v\in W^-}W_vf(\vec\xi_{j+v})}{\sum_{v\in W^+}W_vf(\vec\xi_{j+v})}\right|<1$, and we are done.
\end{proof}

\begin{theorem}
Let $f:\mathbb{R}^d\to \mathbb{R}$ be $n+1$ times continuously differentiable. Suppose there exist constants $A,B,D>0$ such that $A\leq f(\vec{x})\leq B$ and $|\vec\nabla f(\vec{x})|\leq D|f(\vec{x})|$. Let $ Qf$ be the quasi-interpolant of $f$ using degree $n$ B-splines on each axis on a mesh consisting of $d$-dimensional cubes with side length $h$. Then for all $\vec{x},\vec{y}$,
$$|\log Qf(\vec x)-\log Qf(\vec y)|\leq M'(n,h)|\vec{x}-\vec{y}|$$
with
$$M'(n,h)=\frac{DB+C_1h^n}{A-C_2h^{n+1}},$$ 
where $C_1=\sqrt{d}C|f|_{W^{n+1}_\infty(R)}$ is the constant in Corollary \ref{cor:gradient_est} and $C_2=c(n,d)\|Q\|(2n+1)^{n+1}\sum_{v=1}^d\|\partial^{(n+1)}_vf\|_\infty$ is the constant in Theorem \ref{thm:quasi_error_higher_dim}.
\label{thm:2d_cone_map}
\end{theorem}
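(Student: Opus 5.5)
The plan is to mirror the proof of Theorem \ref{thm:1D_cone}, replacing the one-dimensional fundamental theorem of calculus by integration along the straight segment from $\vec y$ to $\vec x$. Set $\gamma(t)=\vec y+t(\vec x-\vec y)$ for $t\in[0,1]$. Assuming $Qf>0$ along the segment (justified below), write
$$\log Qf(\vec x)-\log Qf(\vec y)=\int_0^1\frac{\vec\nabla Qf(\gamma(t))\cdot(\vec x-\vec y)}{Qf(\gamma(t))}\,dt .$$
Cauchy--Schwarz then gives
$$|\log Qf(\vec x)-\log Qf(\vec y)|\leq|\vec x-\vec y|\max_{t\in[0,1]}\frac{|\vec\nabla Qf(\gamma(t))|}{|Qf(\gamma(t))|}.$$
Everything is thereby reduced to a pointwise bound on the ratio $|\vec\nabla Qf|/|Qf|$. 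We work on the parameter rectangle $R$, which is convex, so the whole segment stays in the region where the approximation estimates apply.

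For the numerator, the triangle inequality and Corollary \ref{cor:gradient_est} give
$$|\vec\nabla Qf(\vec z)|\leq|\vec\nabla f(\vec z)|+\sqrt d\,C h^n|f|_{W^{n+1}_\infty(R)}=|\vec\nabla f(\vec z)|+C_1h^n.$$
We then use the hypothesis $|\vec\nabla f(\vec z)|\leq D|f(\vec z)|\leq DB$. For the denominator, Theorem \ref{thm:quasi_error_higher_dim}, with $p$ taken to be the quasi-interpolant as the paper remarks, gives
$$|Qf(\vec z)-f(\vec z)|\leq C_2h^{n+1},$$
and hence $Qf(\vec z)\geq A-C_2h^{n+1}$. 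Combining the two bounds yields the ratio bound
$$\frac{|\vec\nabla Qf(\vec z)|}{|Qf(\vec z)|}\leq\frac{DB+C_1h^n}{A-C_2h^{n+1}}=M'(n,h),$$
and substituting this into the line integral finishes the argument.

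The only real subtlety is the step that needs care rather than being hard: we must have $A-C_2h^{n+1}>0$. This condition makes $Qf$ strictly positive on $R$, so that $\log Qf$ is defined and differentiable along the segment, and it also makes $M'(n,h)$ a positive, finite constant. I would state this as the standing smallness assumption on $h$, exactly as is implicit in Theorem \ref{thm:1D_cone} and the remark following it. I would also note that $Qf$ is piecewise polynomial and $C^{n-1}$ across knots. For $n\geq1$ it is therefore continuous and piecewise smooth along the segment, so the chain rule and the line integral hold almost everywhere and the integral representation is valid.
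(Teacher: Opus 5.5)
Your proposal is correct and follows essentially the same route as the paper: integrate $\vec\nabla Qf/Qf$ along the segment $[\vec y,\vec x]$, bound the numerator by $|\vec\nabla f|+C_1h^n\leq DB+C_1h^n$ via Corollary \ref{cor:gradient_est}, and bound the denominator below by $A-C_2h^{n+1}$ via Theorem \ref{thm:quasi_error_higher_dim}. The paper's argument also implicitly assumes $A-C_2h^{n+1}>0$ (so that $Qf>0$ and $\log Qf$ is defined) and that $\vec x,\vec y$ lie in the convex parameter rectangle $R$; stating these explicitly, as you do, is a sensible addition.
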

\begin{proof}
Let $[\vec y,\vec x]$ denote the line segment from $\vec{y}$ to $\vec x$, then
$$|\log Qf(\vec x)-\log Qf(\vec y)|=\left|\int_{[\vec y,\vec x]}\frac{\vec\nabla Qf}{ Qf}\cdot d\vec{r}\,\right|\leq |\vec{x}-\vec{y}|\cdot\max_{\vec{v}\in[\vec y,\vec x]}\left|\frac{\vec\nabla  Qf(\vec v)}{ Qf(\vec v)}\right|$$
Using the triangle inequality and the estimates in Theorem \ref{thm:quasi_error_higher_dim} and Corollary \ref{cor:gradient_est}
$$\left|\frac{\vec\nabla  Qf(\vec v)}{ Qf(\vec v)}\right|\leq\frac{|\vec{\nabla}f(\vec v)|+C_1h^n}{|f(\vec v)|-C_2h^{n+1}}.$$
In the denominator, we bound $A\leq |f(\vec{v})|$, and in the numerator, we bound $|\vec\nabla f(\vec{v})|\leq D|f(\vec{v})|\leq DB$. Thus
$$|\log Qf(\vec x)-\log Qf(\vec y)|\leq |\vec{x}-\vec{y}|\cdot\max_{\vec{v}\in[\vec y,\vec x]}\left|\frac{\vec\nabla  Qf(\vec v)}{ Qf(\vec v)}\right|\leq|\vec x-\vec y|\frac{DB+C_1h^n}{A-C_2h^{n+1}}.$$
With $M'(n,h)$ defined in the theorem statement, we obtain the desired result.
\end{proof}

As in the one-dimensional case, for parameters $0<\alpha,\beta<1$, we can take $h$ small enough so that $C_1h^n<\alpha DB$ and $C_2h^{n+1}<\beta A$, then
$$M'(n,h)\leq\frac{1+\alpha}{1-\beta}\cdot \frac{DB}{A}.$$

\section{Numerical Results}

In this section we provide numerical results to verify our method's accuracy. In all of the examples, we take $E$ to be a finite alphabet.
%We approximate the Perron-Frobenius operator defined in Equation \ref{eq:perron} by a matrix constructed using B-splines. 

\subsection{1D Continued Fractions}

To approximate the Perron-Frobenius operator on $[0,1]$, we first decide upon the degree $n$ for the B-splines that will be used to form a quasi-interpolant. The degree must be decided in advance, because an expansion of the mesh is necessary to ensure that the parameter interval of the B-splines is $[0,1]$. In order to partition $[0,1]$ into $J$ subintervals of length $h=1/J$ and ensure that $[0,1]\subset D^n_{\xi}$, we partition the interval $[-nh,1+nh]$ into $J+2n$ subintervals by selecting the knot sequence to be $\{\xi_j\}_{j=0}^{J+2n}$ where $\xi_j=(j-n)h$.

In the Perron-Frobenius operator, we replace $f$ by its quasi-interpolant,
$$Qf(x)=\sum_{j=1}^J(Q_jf)b_j(x)=\sum_{j=1}^J\sum_{v=0}^nw_vf(\overline\xi_{j+v})b_j(x).$$
That is,
\begin{align*}
L_sf(x)&\approx L_s(Qf)(x)\\
&=\sum_{e\in E}\|D\phi_e(x)\|^sQf(\phi_e(x))\\
&=\sum_{e\in E}\|D\phi_e(x)\|^s\sum_{j=1}^J\sum_{v=0}^nw_vf(\overline\xi_{j+v})b_j(\phi_e(x)).
\end{align*}
Testing at the points $x_i=\overline\xi_i$, we obtain a finite-dimensional approximation of $L_s$. Note that since $E$ is finite, we can change the order of summation.
\begin{align*}
L_s(Qf)(x_i)&=\sum_{e\in E}\|D\phi_e(x_i)\|^s\sum_{j=1}^J\sum_{v=0}^nw_vf(x_{i+v})b_j(\phi_e(x_i))\\
&=\sum_{j=1}^J\sum_{e\in E}\sum_{v=0}^n\|D\phi_e(x_{i-v})\|^sw_vb_j(\phi_e(x_{i-v}))f(x_i)\\
&=\sum_{j=1}^J(\mathbf{L}_h)_{j,i}f(x_i),
\end{align*}
where $\mathbf{L}_h$ is the matrix whose entries are 
$$(\mathbf{L}_h)_{j,i}=\sum_{e\in E}\sum_{v=0}^n\|D\phi_e(x_{i-v})\|^sw_vb_j(\phi_e(x_{i-v})).$$
%We discretize $[0,1]$ into a grid of mesh size $h$. However, in order for the parameter interval of 
For $s>0$ fixed, let $f_s$ be the unique strictly positive eigenfunction associated to $L_s$. Using Theorem \ref{thm:quasi_error}, we obtain
$$|f_s(x)-Qf_s(x)|\leq\frac{(n+1)^n\|Q\|}{n!}\|f^{(n+1)}\|_\infty h^{n+1}.$$
We can bound this quantity in terms of $\|f\|_\infty$ using the estimate from Theorem \ref{thm:falk_deriv_bounds} %following estimate by Falk and Nussbaum \cite{falk2018c} (see Corollary 6.4),
$$|f_s^{(n)}(x)|\leq (2s)(2s+1)\cdots(2s+n-1)|f_s(x)|.$$
Let $C_n=(2s)(2s+1)\cdots(2s+n)$ and $\mathrm{err}=\frac{(n+1)^n\|Q\|}{n!}C_nh^{n+1}$, then
$$(1-\mathrm{err})Qf_s(x)\leq f_s(x)\leq (1+\mathrm{err})Qf_s(x).$$
Combining this with approximation of the Perron-Frobenius operator, we obtain the two matrices $\mathbf{A}_h$ and $\mathbf{B}_h$ which allow us to obtain rigorous lower and upper bounds on the Hausdorff dimension of the limit set produced by the iterated function system,
$$(\mathbf{A}_h)_{j,i}=(1-\mathrm{err})\sum_{e\in E}\sum_{v=0}^n\|D\phi_e(x_{i-v})\|^sw_vb_j(\phi_e(x_{i-v}))$$
$$(\mathbf{B}_h)_{j,i}=(1+\mathrm{err})\sum_{e\in E}\sum_{v=0}^n\|D\phi_e(x_{i-v})\|^sw_vb_j(\phi_e(x_{i-v}))$$
Recall that in order for Theorem \ref{thm:1dhiddenpos} to hold, for even $n$ we require
$$E(n,h)=\exp(Mhn)\left(1-\left(\sum_{v\in W^+}w_v\right)^{-1}\right)<1.$$
Solving for $h$ gives
\begin{equation}
h<\frac{-\log\left(1-\left(\sum_{v\in W^+}w_v\right)^{-1}\right)}{Mn}.
\label{eq:M_cond1D}
\end{equation}
Let $f_s$ be the unique strictly positive eigenfunction of $L_s$ with eigenvalue 1. We want to establish that $f_s$ belongs to some cone $K_M$ that is mapped by $\mathbf{L}_h$ into a cone $K_{M'}$ with $M'<M$ in order to apply Lemma \ref{lem:cones}. In particular, we will first show that $f_s\in K_2$, which implies $f_s\in K_M$ for any $M>2$.

For any $x,y\in[0,1]$, we have 
$$|\log f_s(x)-\log f_s(y)|=\left|\int_y^x\frac{f_s'(t)}{f_s(t)}\,dt\right|\leq\max_{t\in[0,1]}\left|\frac{f_s'(t)}{f_s(t)}\right||x-y|.$$
From the derivatives from Theorem \ref{lem:deriv_bounds_1D}, we have $|f'_s(x)|\leq 2s|f_s(x)|\leq 2|f(x)|$, so it follows that $|\log f_s(x)-\log f_s(y)|\leq 2|x-y|$ for all $x,y\in[0,1]$. Therefore, $f_s\in K_2$.

From Theorem \ref{thm:1D_cone}, we have for parameters $\alpha,\beta>0$,
$$|\log Qf(x)-\log Qf(y)|\leq M'|x-y|,\quad\text{where}\quad M'\leq\frac{1+\alpha}{1-\beta}\frac{DB}{A},$$
where we also require $C_1h^n<\alpha DB$ and $C_2h^{n+1}<\beta A$, with
$$C_1=\frac{2(n+1)^{n-1}\|Q\|}{(n-1)!}\|f^{(n+1)}\|_\infty,\quad\text{and}\quad C_2=\frac{(n+1)^n\|Q\|}{n!}\|f^{(n+1)}\|_\infty.$$
The derivative estimates from Theorem \ref{lem:deriv_bounds_1D} give $D=2s\leq 2$. From Theorem \ref{thm:CIFS_props}, we have that $K^{-s}\leq f_s\leq K^s$, where $K$ is the constant from the bounded distortion property. We bound $K\leq 4$ and by taking $s=1$, we obtain the bound $1/4\leq f_s\leq 4$. We then have %Taking $\alpha=\beta=0.01$, we have 
$$M'\leq\frac{1+\alpha}{1-\beta}\frac{2sK}{K^{-1}}\leq \frac{1+\alpha}{1-\beta}(32).$$
%In order for $M'<M$, we will take $M=33$, which is valid since we showed above that $M\leq 2$. In 1D with degree $n=2$ B-splines, $\sum_{v\in W^+}w_v=5/4$ and thus we require 
We can choose $\alpha$ and $\beta$ appropriately to obtain specific values of $M'$ for which $Qf\in K_{M'}$. While choosing $\alpha,\beta\ll1$ will cause $M'$ to be close to 32, these choices can cause overly restrictive conditions on how small $h$ must be chosen,
%$$h<\frac{-\log(1-\frac{2}{3})}{2(2)}\approx0.274$$
%$$h<\frac{-\log\left(\frac{1}{5}\right)}{66}<0.0244$$  
since we also require $h$ to be small enough so that $C_1h^n<\alpha DB$ and $C_2h^{n+1}<\beta A$. Solving for $h^n$ in the $C_1h^n<\alpha DB$ condition gives
$$h^n<\frac{\alpha DB(n-1)!}{2(n+1)^{n-1}\|Q\|\|f_s^{(n+1)}\|_\infty}.$$
For $n=2$, we have $\|Q\|=3/2$ and $\|f_s^{(n+1)}\|_\infty\leq (2s)(2s+1)(2s+2)\|f_s\|_\infty\leq (2s)(2s+1)(2s+2)K\leq96$ when we bound by $s\leq1$ and $K\leq 4$. We obtain the requirement
$$h^2<\frac{\alpha DB}{864}.$$%\frac{\alpha DB}{2400}=\frac{\alpha DB\cdot 3!}{2(2)(5)^2(\tfrac{3}{2})(96)}<\frac{\alpha DB(n+1)!}{2n\|Q\|\|f_s^{(n+1)}\|_\infty}$$
We have $B=4$ and $D=2$, so we obtain
\begin{equation}
h<\sqrt{\frac{\alpha}{108}}.
\label{eq:alpha_cond1D}
\end{equation}
A similar calculation for the $C_2h^{n+1}<\beta A$ condition yields
\begin{equation}
h<\sqrt[3]{\frac{\beta}{324}}.
\label{eq:beta_cond1D}
\end{equation}
Ideally, $\alpha$ and $\beta$ should be chosen so that none of the conditions (\ref{eq:M_cond1D}), (\ref{eq:alpha_cond1D}), or (\ref{eq:beta_cond1D}) is much smaller than the others.

We will not attempt to optimize these conditions. We will take $\alpha=\beta=0.05$, which yield $M'<36$, and require $h<0.022$ (from (\ref{eq:alpha_cond1D})) and $h<0.054$ (from (\ref{eq:beta_cond1D})). Therefore, if we take the cone $K_M$ with $M=36$, we have $\mathbf{L}_h(K_M\setminus\{0\})\subset K_{M'}\setminus\{0\}$ and thus Lemma \ref{lem:cones} applies. %This is valid because we showed above that $f\in K_2\subset K_M$.

In 1D with degree $n=2$ B-splines, we have $\sum_{v\in W^+}w_v=5/4$, and plugging $M=36$ into condition (\ref{eq:M_cond1D}) gives
$$h<\frac{-\log\left(\frac{1}{5}\right)}{2(36)}<0.023.$$  
As we can see, choosing $\alpha=\beta=0.05$ means that all conditions on $h$ are satisfied even with a rather coarse mesh. Of course, one could obtain even less restrictive conditions on $h$ by employing sharper bounds for $A$, $B$, $D$, $K$, and $s$, depending on the specific alphabet to be estimated, but that is unnecessary in this case. 

\begin{remark}
While these conditions are sufficient for our theoretical results to hold, $h$ should also be carefully chosen when performing computations. In particular, if the alphabet $E$ contains large natural numbers, then $h$ should also be chosen to satisfy $h<1/\max E$. This is necessary for the method to properly resolve the effects of the large elements in $E$, since their maps $\phi_e(x)=(x+e)^{-1}$ send $[0,1]$ near to $0$. 
\end{remark}
%Therefore, the dominant controlling factor for the choice of $h$ is the one that comes from the $E(n,h)<1$ condition.

Table \ref{table:1D} shows lower and upper estimates of the Hausdorff dimensions of various one-dimensional continued fraction systems using quadratic B-splines ($n=2$). For all examples, we bounded $\mathrm{err}$ using $s=1$, meaning $\mathrm{err}<162h^3$ for all of the entries in Table $\ref{table:1D}$. %Our choice of $h$ in each case conforms with the requirement above.

Tables \ref{table:conv12}, \ref{table:conv34}, and \ref{table:conv100} show the numerically computed convergence rates of our method for the alphabets $E=\{1,2\}$, $E=\{1,2,\dots,34\}$, and $E=\{1,2,\dots,100\}$ respectively. As we can see, we are achieving the order 3 convergence we expect from the theory. We only present tables for these examples for the sake of brevity, but we have confirmed the convergence rates for all of examples in Table \ref{table:1D}, so long as $h<1/\mathrm{max}E$. 

\begin{comment}
\begin{table}

\centering
\begin{tabular}{|c|l|}\hline
$E$& Estimated $s^*$\\\hline
$\{1,2\}$&0.53128\ 05062\ 77205\\\hline
$\{1,2,3,4,5\}$&0.83682\ 94436\ 81209\\\hline
$\{100,10\,000\}$&0.05224\ 65926\ 38659\\\hline
$\{1,2,\dots,34\}$&0.98041\ 96252\ 26980\\\hline
$\{1,2,\dots,100\}$&0.99366\ 11108\ 10628\\\hline
$\{2,3,5,7,11,13\}$&0.57551\ 38093\ 20207\\\hline
\end{tabular}
\caption{Hausdorff dimension estimates for various 1D continued fraction subsystems. Our estimates in rows 1-4 match those of Falk and Nussbaum \cite{falk2021hidden}.}
\end{table}
\end{comment}

\begin{table}[h]
\centering
\begin{tabular}{|c|l|}\hline
$\{1,2\}$&$h=10^{-5}$\\\hline
Lower Estimate: & 0.53128 05062 7707\\\hline
Upper Estimate: & 0.53128 05062 7734\\\hline\hline
$\{1,2\}$&$h=10^{-6}$\\\hline
Lower Estimate: & 0.53128 05062 77205\\\hline
Upper Estimate: & 0.53128 05062 77205\\\hline\hline
$\{1,2,\dots,34\}$&$h=10^{-5}$\\\hline
Lower Estimate: & 0.98041 96252 269\\\hline
Upper Estimate: & 0.98041 96252 271\\\hline\hline
$\{1,2,\dots,100\}$&$h=10^{-5}$\\\hline
Lower Estimate: & 0.99366 11108 1055\\\hline
Upper Estimate: & 0.99366 11108 1071\\\hline\hline
$\{100,10 000\}$ & $h=10^{-5}$\\\hline
Lower Estimate: & 0.05224 65926 38646\\\hline
Upper Estimate: & 0.05224 65926 38672\\\hline\hline
$\{\text{Primes $<10,000$}\}$&$h=10^{-5}$\\\hline
Lower Estimate: & 0.67243 17094 03618\\\hline
Upper Estimate: & 0.67243 17094 03699\\\hline
\end{tabular}
\caption{Hausdorff dimension estimates for various 1D continued fraction systems using quadratic B-splines. The estimates for $E=\{1,2\}$ with $h=10^{-6}$ were limited by machine precision.}
\label{table:1D}
\end{table}
\clearpage
\begin{comment}
\begin{table}[b]
\centering
\begin{tabular}{c|ccc}
$h$&$s_h$&$|s_h-s|$&$\log_2\left|\frac{s_h-s}{s_{h/2}-s}\right|$\\\hline
$1/20$ & 0.531280392640417 & 0.113636788423577 E-6 & 5.109875175816618\\
$1/40$ & 0.531280502986468 & 0.003290737460304 E-6 & 5.284881118415152\\
$1/80$ & 0.531280506361614 & 0.000084408369183 E-6 & 1.591321991986909\\
$1/160$ & 0.531280506249193 & 0.000028012370201 E-6 & 2.110704972804365 \\
$1/320$ & 0.531280506270719 & 0.000006485811888 E-6 & 3.548458850950149\\
$1/640$ & 0.531280506276651 & 0.000000554334356 E-6 & 7.427710196202521\\
$1/1280$ & 0.531280506277208 & 0.000000003219647 E-6 & -0.311944006314740\\
$1/2560$ & 0.531280506277201 & 0.000000003996803 E-6 & 4.169925001442312\\
$1/5120$ & 0.531280506277205 & 0.000000000222045 E-6 & 1\\
$1/10240$ & 0.531280506277205 & 0.000000000111022 E-6 & \\
\end{tabular}
\caption{Convergence rate for $E=\{1,2\}$ with $h=1/(10\cdot2^i)$ for $i=1,\dots 10$. Estimates were compared to the known value $s= 0.531 280 506 277 205$ \cite{pollicott2022hausdorff}.}
\end{table}
\end{comment}

\begin{table}
\centering
\begin{tabular}{c|ccc}
$h$&$s_h$&$|s_h-s|$&$\log_2\left|\frac{s_h-s}{s_{h/2}-s}\right|$\\\hline
$1/25$ & 0.531280459979052 &  & \\
$1/50$ & 0.531280502054878 & 0.462981526450079 E-7  & 3.454 \\
$1/100$ & 0.531280505993694 & 0.042223271545794 E-7  & 3.896 \\
$1/200$ & 0.531280506293244 & 0.002835111034827 E-7  & 4.143  \\
$1/400$ & 0.531280506275256 & 0.000160388369252 E-7  & 3.040\\
$1/800$ & 0.531280506277035 & 0.000019492185643 E-7  & 3.521 \\
$1/1600$ & 0.531280506277219 & 0.000001697531005 E-7  & 3.601 \\
$1/3200$ & 0.531280506277204 & 0.000000139888101 E-7 & 3.655 \\
\end{tabular}
\caption{Convergence rate for $E=\{1,2\}$. Estimates were compared to the known value $s= 0.531 280 506 277 205$ \cite{pollicott2022hausdorff}.}
\label{table:conv12}
\end{table}

\begin{table}
\centering
\begin{tabular}{c|ccc}
$h$&$s_h$&$|s_h-s|$&$\log_2\left|\frac{s_h-s}{s_{h/2}-s}\right|$\\\hline
$1/25$ & 0.980544819811888 &  & \\
$1/50$ & 0.980417630158374 & 0.108825814423397 E-3  & 5.425 \\
$1/100$ & 0.980419618266771 & 0.002532141458067 E-3  & 9.204 \\
$1/200$ & 0.980419625182343 & 0.000004292526001 E-3  & 6.397  \\
$1/400$ & 0.980419625224683 & 0.000000050909055 E-3 & 4.123 \\
$1/800$ & 0.980419625226888 & 0.000000002919998 E-3   & 4.278 \\
$1/1600$ & 0.980419625226963 & 0.000000000150435 E-3  & 3.118 \\
$1/3200$ & 0.980419625226979 & 0.000000000017319 E-3 & 6.285 \\
\end{tabular}
\caption{Convergence rate for $E=\{1,2,\dots,34\}$. Estimates were compared to the known value $s= 0.980 419 625 226 980$ \cite{falk2021hidden}.}
\label{table:conv34}
\end{table}

\begin{table}[b]
\centering
\begin{tabular}{c|ccc}
$h$&$s_h$&$|s_h-s_{h/2}|$&$\log_2\left|\frac{s_h-s_{h/2}}{s_{h/2}-s_{h/4}}\right|$\\\hline
$1/25$ & 0.994056384176409 &  & \\
$1/50$ & 0.993758360884591 & 0.298023291817828 E-3  & \\
$1/100$ & 0.993669194463752 & 0.089166420838893 E-3  & 1.740 \\
$1/200$ & 0.993660532610005 & 0.008661853746705 E-3  & 3.363  \\
$1/400$ & 0.993661110807823 & 0.000578197818069 E-3 & 3.905 \\
$1/800$ & 0.993661110810432 & 0.000000002608358 E-3  & 17.758\\
$1/1600$ & 0.993661110810616 & 0.000000000184075 E-3 & 3.824\\
$1/3200$ & 0.993661110810628 & 0.000000000011990 E-3 & 3.940\\
$1/6400$ & 0.993661110810628 & 0.000000000000222 E-3 & 5.754\\
\end{tabular}
\caption{Convergence rate for $E=\{1,2,\dots,100\}$. In this example, we can see the computational requirement for $h<1/\max E$ in order to achieve the high-order convergence.}
\label{table:conv100}
\end{table}

\clearpage
\subsection{2D Continued Fractions}

The approximation of the Perron-Frobenius operator on $[0,1]\times[-\frac{1}{2},\frac{1}{2}]$ proceeds similarly to the one-dimensional case. As above, the degree of the quasi-interpolant must be decided in advance to ensure accuracy in the domain. While different-degree B-splines could be used for each axis, for simplicity, we choose them to be the same. Let $n$ be the degree, $h=1/J$, and partition the square $[-nh,1+nh]\times[-\frac{1}{2}-nh,\frac{1}{2}+nh]$ into squares with side length $h$. This partition corresponds to the knot sequence $\{\xi_{j,x}\}_{j=0}^{J+2n}$ on the $x$-axis, where $\xi_{j,x}=(j-n)h$, and $\{\xi_{j,y}\}_{j=0}^{J+2n}$ on the $y$-axis, where $\xi_{j,y}=-\frac{1}{2}+(j-n)h$. %We let $h=\sqrt{h_x^2+h_y^2}$ denote the mesh size of the elements.

Note that here we are taking the domain to be a square containing the disk $\{(x,y):|(x,y)-(\frac{1}{2},0)|\leq\frac{1}{2}\}$. This choice is valid because the maps $\phi_e$, with enough compositions, map the square into the disk, which means the limit set for the square is the same as the limit set for the disk.

Thereafter, the construction of the approximation matrices proceeds similarly to the one-dimensional case, just with the higher-dimensional splines taking the place of the univariate splines. We will show the construction with all products written out for the sake of clarity. We again begin by replacing $f$ with its quasi-interpolant $Qf$,
$$Qf(x,y)=\sum_{j_x=1}^J\sum_{j_y=1}^J\sum_{v_x=0}^n\sum_{v_y=0}^nw_{v_x}w_{v_y}f(\overline\xi_{j_x+v_x},\overline\xi_{j_y+v_y})b_{j_x}(x)b_{j_y}(y).$$
Since the maps $\phi_e(x,y)$ take values in $\mathbb{R}^2$, we'll write $\phi_e(x,y)=(\phi_{e,x}(x,y),\phi_{e,y}(x,y))$. Substituting the quasi-interpolant into the Perron-Frobenius operator, we obtain
\begin{align*}
L_s(Qf)(x,y)&=\sum_{e\in E}\|D\phi_e(x,y)\|^sQf(\phi_e(x,y))\\
&=\sum_{e\in E}\|D\phi_e(x,y)\|^s\sum_{j_x=1}^J\sum_{j_y=1}^J\sum_{v_x=0}^n\sum_{v_y=0}^nw_{v_x}w_{v_y}f(\overline\xi_{j_x+v_x},\overline\xi_{j_y+v_y})b_{j_x}(\phi_{e,x}(x,y))b_{j_y}(\phi_{e,y}(x,y)).
\end{align*}
Testing at the points $(x_{i_x},y_{i_y})=(\overline\xi_{i_x},\overline\xi_{i_y})$, we again obtain a finite-dimensional approximation of $L_s$.
{\small\begin{align*}
L_s(Qf)(x_{i_x},y_{i_y})=\sum_{e\in E}\|D\phi_e(x_{i_x},y_{i_y})\|^s\sum_{j_x=1}^J\sum_{j_y=1}^J\sum_{v_x=0}^n\sum_{v_y=0}^nw_{v_x}w_{v_y}f(x_{i_x+v_x},y_{i_y+v_y})b_{j_x}(\phi_{e,x}(x_{i_x},y_{i_y}))b_{j_y}(\phi_{e,y}(x_{i_x},y_{i_y}))\\
=\sum_{j_x=1}^J\sum_{j_y=1}^J\sum_{e\in E}\sum_{v_x=0}^n\sum_{v_y=0}^n\|D\phi_e(x_{i_x-v_x},y_{i_y-v_y})\|^sw_{v_x}w_{v_y}b_{j_x}(\phi_{e,x}(x_{i_x-v_x},y_{i_y-v_y}))b_{j_y}(\phi_{e,y}(x_{i_x-v_x},y_{i_y-v_y}))f(x_{i_x},y_{i_y}).&
\end{align*}}
Here, $\mathbf{L}_h$ is the matrix whose entries are
$$(\mathbf{L}_h)_{j,i}=\sum_{e\in E}\sum_{v_x=0}^n\sum_{v_y=0}^n\|D\phi_e(x_{i_x-v_x},y_{i_y-v_y})\|^sw_{v_x}w_{v_y}b_{j_x}(\phi_{e,x}(x_{i_x-v_x},y_{i_y-v_y}))b_{j_y}(\phi_{e,y}(x_{i_x-v_x},y_{i_y-v_y})),$$
where $j=j_x+J(j_y-1)$ and $i=i_x+J(i_y-1)$.

For $s>0$ fixed, let $f_s$ be the unique strictly positive eigenfunction associated to $L_s$. Using Theorem \ref{thm:quasi_error_higher_dim}, we obtain
$$|f_s(x)-Qf_s(x)\leq c(n,d)\|Q\|(2n+1)^{n+1}(\|\partial_x^{(n+1)}f\|_\infty+\|\partial_y^{(n+1)}f\|_\infty)h^{n+1}.$$
To compute $c(n,d)$, we must first compute $c_1(n)$ by choosing an orthonormal basis for $\mathbb{P}^n([0,1])$ under the $L^2$ inner product. For our computations using degree 2 B-splines, we must therefore choose an orthonormal basis of $\mathbb{P}^2([0,1])$. The shifted Legendre polynomials
$$p_0(x)=1,\quad p_1(x)=\sqrt3(2x-1),\quad\text{and}\quad p_2(x)=\sqrt5(6x^2-6x+1)$$
yield an orthonormal basis \cite{lima2022lecture}. With this choice, we compute $c_1(2)$ and $c_2(2)$ using Equations (\ref{eq:c_1}) and (\ref{eq:c_2}), respectively,
$$c_1(2)<4.427\quad\quad\quad c_2(2)<0.114.$$
Using Equation \ref{eq:multi_error_bound}, we obtain
$$c(n,d)<0.62.$$
Let $C_x$ and $C_y$ be the bounds from Theorem \ref{thm:falk_deriv_bounds} such that $|\partial^3_xf_s|\leq C_x|f_s|$ and $|\partial^3_yf_s|\leq C_y|f_s|$, respectively. If we define $\mathrm{err}=c(n,d)\|Q\|(2n+1)^{n+1}(C_x+C_y)h^{n+1}$, then
$$(1-\mathrm{err})Qf_s(x)\leq f_s(x)\leq(1+\mathrm{err})Qf_s(x).$$
As in the one-dimensional case, we obtain two matrices $\mathbf{A}_h$ and $\mathbf{B}_h$ that allow us to obtain rigorous bounds for the Hausdorff dimension of the limit set,
$$(\mathbf{A}_h)_{j,i}=(1-\mathrm{err})\sum_{e\in E}\sum_{v_x=0}^n\sum_{v_y=0}^n\|D\phi_e(x_{i_x-v_x},y_{i_y-v_y})\|^sw_{v_x}w_{v_y}b_{j_x}(\phi_{e,x}(x_{i_x-v_x},y_{i_y-v_y}))b_{j_y}(\phi_{e,y}(x_{i_x-v_x},y_{i_y-v_y}))$$
and
$$(\mathbf{B}_h)_{j,i}=(1+\mathrm{err})\sum_{e\in E}\sum_{v_x=0}^n\sum_{v_y=0}^n\|D\phi_e(x_{i_x-v_x},y_{i_y-v_y})\|^sw_{v_x}w_{v_y}b_{j_x}(\phi_{e,x}(x_{i_x-v_x},y_{i_y-v_y}))b_{j_y}(\phi_{e,y}(x_{i_x-v_x},y_{i_y-v_y})),$$
where $j=j_x+J(j_y-1)$ and $i=i_x+J(i_y-1)$.

Recall that in order for Theorem \ref{thm:2dhiddenpos} to hold, for even $n$ we require
$$\exp(Mh\sqrt{d}n)(1-(\sum_{v\in W^+}W_v)^{-1})<1.$$
Solving for $h$ yields
\begin{equation}
h<\frac{-\log(1-(\sum_{v\in W^+}W_v)^{-1})}{Mn\sqrt{d}}.
\label{eq:2d_M_req}
\end{equation}
We'll first establish that $f_s$ belongs to $K_M$ for some value of $M$. Using the bounds on $|\partial_xf_s|$ and $|\partial_yf_s|$ from Theorem \ref{thm:falk_deriv_bounds}, we obtain $|\vec\nabla f_s(x,y)|\leq s\sqrt{5}|f_s(x,y)|$. Thus, for any $\vec{x},\vec{y}\in[0,1]\times[-\frac12,\frac12]$, we have
$$|\log f_s(\vec{x})-\log f_s(\vec{y})|=\left|\int_{[\vec{y},\vec{x}]}\frac{\vec{\nabla} f_s}{f_s}\cdot d\vec{r}\right|\leq s\sqrt5|\vec{x}-\vec{y}|.$$
Crudely bounding $s\leq2$ we obtain $f_s\in K_{2\sqrt5}$.

Recall that we need $\mathbf{L}_h$ to map the cone $K_M$ into a cone $K_{M'}$ where $M'<M$. From Theorem \ref{thm:2d_cone_map}, we have for parameters $0<\alpha,\beta<1$,
$$|\log Qf(\vec x)-\log Qf(\vec y)|\leq M'|\vec{x}-\vec{y}|,\quad \text{where}\quad M'\leq \frac{1+\alpha}{1-\beta}\frac{DB}{A}.$$
In order for this to hold, recall we further require $C_1h^n<\alpha DB$ and $C_2h^{n+1}<\beta A$, where
$$C_1=\sqrt{d}(C_{BH,j=1}(2n+1)^nd^{n/2}+2\|Q\|C_{BH,j=0}(2n+1)^{n+1}d^{(n+1)/2})|f_s|_{W^{n+1}_\infty(R)}$$
and 
$$C_2=c(n,d)\|Q\|(2n+1)^{n+1}(\|\partial^3_xf_s\|_\infty+\|\partial^3_yf_s\|_\infty).$$
We have $K^{-s}\leq f_s\leq K^s$, where $K$ is the constant from the bounded distortion property, which we can bound as $K\leq 4$. We can also bound $s<s_0:=1.8572$ since Chousionis et al. \cite{chousionis2024rigorous} obtained 1.8572 as an upper bound on the limit set associated to the full alphabet $E=\mathbb{N}\times\mathbb{Z}$. We then have
$$M'\leq\frac{1+\alpha}{1-\beta}2\sqrt{5}K^{2s_0}<\frac{1+\alpha}{1-\beta}(771).$$

Thus, we can make $M'$ arbitrarily close to 771. As in the one-dimensional case, we must judiciously choose $\alpha$ and $\beta$ so that $M'$ isn't too large, while at the same time ensuring the conditions $C_1h^n<\alpha DB$ and $C_2h^{n+1}<\beta A$ are not overly restrictive on $h$.

We compute $C_{BH,j=1}=2\sqrt6$ and $C_{BH,j=0}=\sqrt5$ using Equation \ref{eq:BH_constant}. From the derivative estimates we obtain $D=2\sqrt5$, and we also take $B=4^{s_0}$ and $A=4^{-s_0}$. We compute $|f_s|_{W^3_\infty}<192.71$ using the third derivative bounds from Theorems \ref{thm:falk_deriv_bounds} and \ref{thm:third_bounds}, with $s$ taken as $s_0$. Combining, we obtain $C_1<1.1\cdot 10^6$. We obtain the requirement
\begin{equation}
h<\sqrt{\frac{\alpha\cdot2\sqrt5\cdot4^{s_0}}{1.1\cdot10^6}}.
\label{eq:alpha_req_2d}
\end{equation}

We perform a similar calculation for the $C_2h^{n+1}<\beta A$ condition. We already showed that $c(n,d)<0.62$, for degree 2 B-splines we have $\|Q\|=9/4$, and we have $\|\partial^3_xf_s\|_\infty+\|\partial^3_yf_s\|_\infty<116.55$ using the bounds from Theorem \ref{thm:falk_deriv_bounds} evaluated at $s=s_0$. Thus we obtain 
\begin{equation}
h<\sqrt[3]{\frac{\beta\cdot4^{-s_0}}{0.62\cdot\frac94\cdot5^3\cdot116.55}}.
\label{eq:beta_req_2d}
\end{equation}

Taking $\alpha=\beta=0.01$ yields $M'<787$, which requires $h<0.000753$ (from (\ref{eq:alpha_req_2d})) and $h<0.0155$ (from (\ref{eq:beta_req_2d})). Therefore, if we take the cone $K_M$ with $M=787$, we have $\mathbf{L}_h(K_M\setminus\{0\})\subset K_{M'}\setminus\{0\}$ and thus Lemma \ref{lem:cones} applies. 

In $\mathbb{R}^2$ with degree $n=2$ B-splines, we have $\sum_{v\in W^+}W_v=13/8$, and plugging $M=787$ into condition (\ref{eq:2d_M_req}) gives
$$h<\frac{-\log(\frac{5}{13})}{2(787)\sqrt2}<0.00042.$$
Thus, by choosing $h=1/2500$, we can ensure all conditions on $h$ are satisfied for any choice of $E\subset\mathbb{N}\times\mathbb{Z}$. %Unlike the one-dimensional case, using more specific bounds for $s$ from example to example 

\begin{remark}
Depending on the specific alphabet being computed, the restrictions on $h$ above can be relaxed considerably. The alphabet $\{(1,0),(1,1),(1,-1),(2,0)\}$, for example, requires only that $h<0.002$ when we carry out the same computations with $\alpha=\beta=0.2$ and $s=1.15$. Taking $h<0.00042$ is sufficient for any alphabet, but is not necessary for every example.
\end{remark}

Table \ref{table:2D} shows lower and upper estimates of the Hausdorff dimensions of various two-dimensional continued fraction systems using quadratic B-splines ($n=2$). Unlike the one-dimensional case, bounding $\mathrm{err}$ using $s_0$ for all of the examples is not feasible since $\mathrm{err}$ grows rapidly when $s>1$. For this reason, we used a two-step procedure by first computing the estimates with $\mathrm{err}$ bounded using $s_0$, then re-running the computation with the upper estimate from the first computation as the value of $s$ in $\mathrm{err}$. %For all examples, we bounded $\mathrm{err}$ using $s=1$, meaning $\mathrm{err}<162h^3$ for all of the entries in Table $\ref{table:1D}$. %Our choice of $h$ in each case conforms with the requirement above.

Tables \ref{table:conv2d4}, \ref{table:conv2d9}, and \ref{table:conv2dline} show the numerically computed convergence rates of our method for the alphabets $E=\{(1,0),(1,1),(1,-1),(2,0)\}$, $E=\{(1,0),(1,1),(1,-1),(1,2),(1,-2),(2,0),(2,1),(2,-1),(3,0)\}$, and $E=\{(1,0),(1,1),(1,-1),(1,2),(1,-2),(1,3),(1,-3),(1,4),(1,-4)\}$ respectively. As we can see, like the one-dimensional case, we are achieving the order 3 convergence we expect from the theory. 

\begin{comment}
\begin{table}
\centering
\begin{tabular}{|c|l|}\hline
$E$& Estimated $s^*$\\\hline
$\{(1,0),(1,1),(1,-1),(2,0)\}$&1.14957\ 71469\ 08\\\hline
$\{(1,0),(1,1),(1,-1),(1,2),(1,-2),$&\\
$(2,0),(2,1),(2,-1),(3,0)\}$&1.50050\ 83949\\\hline
$\{(100,0),(100,1),(100,-1),(101,0)\}$&0.15042\ 97642\\\hline
$\{(1,0),(2,0)\}$&0.53128\ 05062\\\hline
\end{tabular}
\caption{Hausdorff dimension estimates for various 2D continued fraction subsystems. The estimate in row 1 matches that of Chousionis et al. \cite{chousionis2024rigorous}}
\end{table}
\end{comment}

\begin{table}[b]
\centering
\begin{tabular}{|c|l|}\hline
$\{(1,0),(1,1),(1,-1),(2,0)\}$&$h=1/2500$\\\hline
Lower Estimate: & 1.14957 67\\\hline
Upper Estimate: & 1.14957 75\\\hline\hline
$\{(1,0),(1,1),(1,-1),(1,2),(1,-2),(2,0),(2,1),(2,-1),(3,0)\}$&$h=1/2500$\\\hline
Lower Estimate: & 1.50050 78 \\\hline
Upper Estimate: & 1.50050 90\\\hline\hline
$\{(1,0),(1,1),(1,-1),(1,2),(1,-2),(1,3),(1,-3),(1,4),(1,-4)\}$&$h=1/2500$\\\hline
Lower Estimate: & 1.42490 23 \\\hline
Upper Estimate: & 1.42490 34\\\hline\hline
$\{(1,0),(2,1),(2,-1),(3,2),(3,-2),(4,3),(4,-3),(5,4),(5,-4)\}$&$h=1/2500$\\\hline
Lower Estimate: & 1.00314 47 \\\hline
Upper Estimate: & 1.00314 52\\\hline\hline
$\{(100,0),(100,1),(100,-1),(101,0)\}$&$h=1/2500$\\\hline
Lower Estimate: & 0.15042 97625 \\\hline
Upper Estimate: & 0.15042 97660 \\\hline\hline
$\{(1,0), (2,0)\}$ & $h=1/2500$\\\hline
Lower Estimate: & 0.53128 046\\\hline
Upper Estimate: & 0.53128 054\\\hline
\end{tabular}
\caption{Hausdorff dimension estimates for various 2D continued fraction systems using quadratic B-splines}
\label{table:2D}
\end{table}

\begin{table}[b]
\centering
\begin{tabular}{c|ccc}
$h$&$s_h$&$|s_h-s_{h/2}|$&$\log_2\left|\frac{s_h-s_{h/2}}{s_{h/2}-s_{h/4}}\right|$\\\hline
$1/25$ & 1.149576916932734 &  & \\
$1/50$ & 1.149577132262602 & 0.215329867492287 E-6  & \\
$1/100$ & 1.149577146114969 & 0.013852367475309 E-6  & 3.958 \\
$1/200$ & 1.149577146885953 & 0.0007709839433558 E-6  & 4.167  \\
$1/400$ & 1.149577146906169  & 0.000020215606966 E-6  & 5.253 \\
$1/800$ & 1.149577146907839 & 0.000001670663607 E-6  & 3.596\\
$1/1600$ & 1.149577146908050 & 0.000000210498285 E-6 & 2.988\\
\end{tabular}
\caption{Convergence rate for $E=\{(1,0),(1,1),(1,-1),(2,0)\}$.}
\label{table:conv2d4}
\end{table}

\begin{table}[b]
\centering
\begin{tabular}{c|ccc}
$h$&$s_h$&$|s_h-s_{h/2}|$&$\log_2\left|\frac{s_h-s_{h/2}}{s_{h/2}-s_{h/4}}\right|$\\\hline
$1/25$ & 1.500506025400362 &  & \\
$1/50$ & 1.500508364806962 & 0.233940659932763 E-5  & \\
$1/100$ & 1.500508393013326 & 0.002820636457734 E-5  & 6.373 \\
$1/200$ & 1.500508394819808 & 0.000180648185300 E-5  & 3.964  \\
$1/400$ & 1.500508394898876 & 0.000007906786337 E-5  & 4.513 \\
$1/800$ & 1.500508394906566 & 0.000000728350713 E-5  & 3.440\\
$1/1600$ & 1.500508394906566 & 0.000000040678572 E-5 & 4.162\\
\end{tabular}
\caption{Convergence rate for $E=\{(1,0),(1,1),(1,-1),(1,2),(1,-2),(2,0),(2,1),(2,-1),(3,0)\}$.}
\label{table:conv2d9}
\end{table}

\begin{table}[b]
\centering
\begin{tabular}{c|ccc}
$h$&$s_h$&$|s_h-s_{h/2}|$&$\log_2\left|\frac{s_h-s_{h/2}}{s_{h/2}-s_{h/4}}\right|$\\\hline
$1/25$ & 1.428023161553318 &  & \\
$1/50$ & 1.424928094178620 & 0.003095067374697  & \\
$1/100$ & 1.424902855059130 & 0.000025239119490  & 6.938 \\
$1/200$ & 1.424902856483980 & 0.000000001424850  & 14.112  \\
$1/400$ & 1.424902856583089 & 0.000000000099109 & 3.845 \\
$1/800$ & 1.424902856588714 & 0.000000000005625  & 4.139\\
$1/1600$ & 1.424902856589184 & 0.000000000000471 & 3.579\\
\end{tabular}
\caption{Convergence rate for $E=\{(1,0),(1,1),(1,-1),(1,2),(1,-2),(1,3),(1,-3),(1,4),(1,-4)\}$.}
\label{table:conv2dline}
\end{table}

\clearpage
\printbibliography

\end{document}